\numberwithin{equation}{section}
\NewDocumentCommand{\fhighlight}{O{blue!40} m m}{%
\fill[#1,opacity=.2] ([shift=({2pt,-2pt})]#2.north west) rectangle ([shift=({-2pt,2pt})]#3.south east);
}
\newcolumntype{L}{>{$}l<{$}} 
\newcolumntype{C}{>{$}c<{$}} 
\def\makeautorefname#1#2{\expandafter\def\csname#1autorefname\endcsname{#2}}
\theoremstyle{definition}
\newtheorem{theorem}{Theorem}[section]
\newtheorem{corollary}[theorem]{Corollary}
\newtheorem{definition}[theorem]{Definition}
\newtheorem{example}[theorem]{Example}
\newtheorem{lemma}[theorem]{Lemma}
\newtheorem{notation}[theorem]{Notation}
\newtheorem{proposition}[theorem]{Proposition}
\newtheorem{remark}[theorem]{Remark}
\let\c@corollary=\c@theorem
\let\c@proposition=\c@theorem
\let\c@lemma=\c@theorem
\let\c@conjecture=\c@theorem
\let\c@definition=\c@theorem
\let\c@example=\c@theorem
\let\c@remark=\c@theorem
\let\c@notation=\c@theorem
\newcommand{\mb}[1]{\mathbb{#1}}
\newcommand{\mc}[1]{\mathcal{#1}}
\newcommand{\mf}[1]{\mathfrak{#1}}
\newcommand{\floor}[1]{\lfloor{#1}\rfloor}
\newcommand{\Char}{\operatorname{char}}
\newcommand{\pcoor}[1]{%
  \begingroup\lccode`~=`: \lowercase{\endgroup
  \edef~}{\mathbin{\mathchar\the\mathcode`:}\nobreak}%
  [
  \begingroup
  \mathcode`:=\string"8000
  #1%
  \endgroup 
  ]
}
\providecommand{\del}{\partial}
\providecommand{\Pic}{\operatorname{Pic}}
\providecommand{\MW}{\mathrm{MW}}
\providecommand{\Th}{\operatorname{Th}}
\providecommand{\Hor}{\operatorname{Hor}}
\providecommand{\Bez}{\operatorname{B\acute ez}}
\providecommand{\GW}{\operatorname{GW}}
\providecommand{\A}{\mathbb{A}}
\renewcommand{\P}{\mathbb{P}}
\renewcommand{\O}{\mathcal{O}}
\let\til\widetilde
\let\smashprod\wedge
\newcommand{\fc}{f_{\mathfrak{c}}}
\newcommand{\fg}{f_{\mathfrak{g}}}
\begin{document}
\title{Lifts, transfers, and degrees of univariate maps}

\author{Thomas Brazelton}
\address{Department of Mathematics, University of Pennsylvania} 
\email{tbraz@math.upenn.edu}
\urladdr{https://www2.math.upenn.edu/~tbraz/}

\author{Stephen McKean}
\address{Department of Mathematics, Duke University} 
\email{mckean@math.duke.edu}
\urladdr{shmckean.github.io}
\subjclass[2020]{Primary: 14F42. Secondary: 11E12, 15A20, 55M25}
\begin{abstract}
    One can compute the local $\mathbb{A}^1$-degree at points with separable residue field by base changing, working rationally, and post-composing with the field trace. We show that for endomorphisms of the affine line, one can compute the local $\mathbb{A}^1$-degree at points with inseparable residue field by taking a suitable lift of the polynomial and transferring its local degree. We also discuss the general set-up and strategy in terms of the six functor formalism. As an application, we show that trace forms of number fields are local $\A^1$-degrees.
\end{abstract}
\maketitle
\section{Introduction}

Let $k$ be a field. In order to compute the local $\A^1$-Brouwer degree of a map $f:\A^n_k\to\A^n_k$ at a closed point $p$ with finite separable residue field $k(p)/k$, one can base change to the field of definition, compute the local degree of $f_{k(p)}$ at the canonical $k(p)$-rational point $\til{p}$ sitting over $p$, and then apply a field trace \cite{trace-paper}. That is, there is an equality
\begin{align*}
    \deg_p^{\A^1}(f) &= \Tr_{k(p)/k} \deg_{\til{p}}^{\A^1} (f_{k(p)})
\end{align*}
in the Grothendieck--Witt group $\GW(k)$. For general finite extensions, two issues arise when $k(p)/k$ is inseparable. First, the trace form of an inseparable extension is degenerate, so the field trace does not provide a well-defined transfer $\GW(k(p))\to\GW(k)$. While alternate transfers are available from motivic homotopy theory, the second issue is simply that base changing $f$ to $k(p)$ and applying a transfer yields a bilinear form whose rank is too large.

We rectify these issues by providing two new ways of lifting $f$. Assuming that $k(p)/k$ is a finite simple field extension with primitive element $t$, we consider two transfers arising from $\mathbb{A}^1$-homotopy theory, namely the \textit{geometric transfer}, denoted $\tau_k^{k(p)}(t)$, and the \textit{cohomological transfer}, denoted $\Tr_k^{k(p)}$. Some motivic yoga suggests that the local $\A^1$-degree of $f$ at $p$ is transferred down from the local degree of a suitable lift of $f$ at the $k(p)$-rational point $\til{p}$ (corresponding to the ideal $(x-t)$) above $p$. We introduce the \textit{geometric lift} $\fg$ and the \textit{cohomological lift} $\fc$ of our polynomial $f$ at the point $p$. In the separable setting, the cohomological lift agrees with the base change of $f_{k(p)}$, recovering the main result of \cite{trace-paper} in the univariate case.
\begin{theorem}\label{thm:main} 
Let $f: \A^1_k \to \A^1_k$ be a morphism with an isolated root at a closed point $p$. Then
\begin{align*}
    \deg_p^{\A^1}(f) = \tau_k^{k(p)}(t) \deg_{\til{p}}^{\A^1} (\fg) = \Tr_k^{k(p)} \deg_{\til{p}}^{\A^1} (\fc).
\end{align*}
\end{theorem}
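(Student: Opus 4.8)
The plan is to compute all three quantities as explicit symmetric bilinear forms via the Scheja--Storch construction (which produces the EKL form representing a local $\A^1$-degree), and then to deduce the two isometries by unwinding the definitions of the lifts $\fg$, $\fc$ and of the transfers $\tau_k^{k(p)}(t)$ and $\Tr_k^{k(p)}$.

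First I would pass to the local picture. Since $f$ has an isolated root at $p$, the class $\deg_p^{\A^1}(f)$ depends only on the image of $f$ in $\O_{\A^1_k,p}$: writing $\mathfrak p\in k[x]$ for the monic irreducible polynomial cutting out $p$ and $m=v_p(f)$ for the multiplicity, we have $f=\mathfrak p^m u$ locally at $p$ with $u$ a unit, the local algebra is $Q_p(f)\cong k[x]/(\mathfrak p^m)$, and $\dim_k Q_p(f)=m[k(p):k]$. Over $k(p)$ one has $\mathfrak p(x)=(x-t)^{[k(p):k]_{i}}h(x)$ with $h(t)\neq 0$; this exponent $[k(p):k]_{i}$ is exactly what makes the naive recipe (base change $f$ to $k(p)$, then transfer) produce a form of rank $m[k(p):k]_{i}[k(p):k]$ rather than $m[k(p):k]$. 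I would then read off from their definitions that $\fg$ and $\fc$ are polynomials over $k(p)$ with an isolated root at $\til p=(x-t)$ whose local algebras have $k(p)$-dimension exactly $m$, so that transferring their local degrees gives a $k$-form of the correct rank $m[k(p):k]$; this rank bookkeeping is the first place where the inseparable case departs from \cite{trace-paper}.

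The heart of the proof is to show, for each (lift, transfer) pair, that the transfer applied to the Scheja--Storch class of the lift at $\til p$ equals the Scheja--Storch class of $f$ at $p$. For this I would first produce concrete formulas for the two transfers as maps $\GW(k(p))\to\GW(k)$ --- the geometric transfer $\tau_k^{k(p)}(t)$ realized as a Scharlau-type transfer attached to the dualizing $k$-linear functional on $k(p)$ determined by the primitive element $t$, and $\Tr_k^{k(p)}$ via its defining pushforward --- and then compare matrices directly: the Scheja--Storch pairing of $f$ at $p$ is computed from the Bezoutian $\tfrac{f(x)-f(y)}{x-y}$ together with a socle generator of $k[x]/(\mathfrak p^m)$, while the transferred pairing of the lift is computed from the Bezoutian of the lift, a socle generator over $k(p)$, and the chosen functional. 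The essential algebraic input is compatibility of the Scheja--Storch socle with base change along $k\to k(p)$, combined with the factorization of $\mathfrak p$ above. I expect this to split into two cases: when $m$ is even both sides are hyperbolic of the common rank $m[k(p):k]$ and nothing further is needed; when $m$ is odd one must match the rank-$[k(p):k]$ ``core'' of each side, which on the $f$ side is a transfer of a rank-one form over $k(p)$ whose discriminant I would track through $u(t)$ and the different, and on the lift side is the corresponding rank-one form over $k(p)$ carrying the value of the lift's unit at $t$.

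Finally I would verify the consistency noted in the introduction: when $k(p)/k$ is separable we have $[k(p):k]_{i}=1$, the cohomological lift $\fc$ is the base change $f_{k(p)}$, and $\Tr_k^{k(p)}$ is the ordinary field trace, so the second equality recovers the univariate case of \cite{trace-paper}, which also isolates the new content as the inseparable correction encoded in the lifts. I expect the main obstacle to be precisely this inseparable bookkeeping in the $m$ odd case: showing that replacing the naive base change by $\fg$ (resp.\ $\fc$) both corrects the rank defect \emph{and} still transfers to the EKL form of $f$ exactly. The six-functor discussion in the paper is what tells us this should be true --- $\deg_p^{\A^1}(f)$ is the pushforward to $\Spec k$ of a twisted local class over $k(p)$, and $\fg$, $\fc$ are the two ways of representing that class by an honest local degree --- but turning that into an on-the-nose equality of bilinear forms is the real work.
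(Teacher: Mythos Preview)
Your high-level strategy matches the paper's: compute both local degrees as explicit B\'ezoutian forms and compare, using the Scharlau description of $\tau_k^{k(p)}(t)$ and the twist $\langle\omega_0(t)\rangle$ relating it to $\Tr_k^{k(p)}$. You also correctly anticipate the shape of the answer (hyperbolic when $d$ is even, hyperbolic plus a rank-$[k(p):k]$ core coming from $\langle u(t)\rangle$ when $d$ is odd), and your derivation of the cohomological half from the geometric half is exactly how the paper does it.

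There is, however, a genuine gap where you write ``when $m$ is even both sides are hyperbolic of the common rank $m[k(p):k]$ and nothing further is needed.'' Over $k(p)$ this is immediate from the upper-triangular Hankel shape of the B\'ezoutian of $\fg$, but over $k$ the claim that $\deg_p^{\A^1}(f)$ is hyperbolic for even $d$ is not free: the local algebra $k[x]/(m^d)$ has dimension $nd$, and nothing in the Scheja--Storch formalism by itself tells you the form is hyperbolic. The paper earns this by choosing the Horner basis $\{\Hor_j(m,x)\,m(x)^i\}$ for $k[x]/(m^d)$ and showing that in this basis the B\'ezoutian of $f$ becomes an \emph{upper-triangular block Hankel} matrix with $d$ blocks of size $n$ along the anti-diagonal; a separate block-diagonalization lemma then says such a form is $\tfrac{nd}{2}\mathbb{H}$ (even $d$) or $\tfrac{n(d-1)}{2}\mathbb{H}$ plus the anti-diagonal block (odd $d$). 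So the even case and the identification of the odd ``core'' are both consequences of the same structural computation, not separate arguments.

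The second missing ingredient is the identification of that anti-diagonal block. You propose to ``track the discriminant through $u(t)$ and the different,'' but what is actually needed is an on-the-nose equality of $n\times n$ matrices: the coefficients of $u(X)\frac{m(X)-m(Y)}{X-Y}$ in the Horner basis coincide with the Gram matrix of $s_*\langle u(t)\rangle$ in the monomial basis. This is the content of the paper's \autoref{prop:multiply-by-bez-to-get-geom-tr}, and it is the step that converts your heuristic (``the core should be the transfer of $\langle u(t)\rangle$'') into an honest isometry. Your outline is correct in spirit, but the Horner-basis/block-Hankel machinery is the real work you flag at the end, and without it neither the even nor the odd case goes through.
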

The proof of \autoref{thm:main} will be given in \autoref{lem:geometric-lift} and \autoref{cor:coh-tr}. In \autoref{rem:unstable}, we discuss how a suitable definition of an \textit{unstable} transfer would imply that \autoref{thm:main} holds unstably. As a corollary of \autoref{thm:main}, we get an upper bound on the rank of the non-hyperbolic part of the local $\A^1$-degree of a polynomial map $f:\A^1_k\to\A^1_k$. 

\begin{corollary}\label{cor:bound on non-hyperbolic} 
Let $f:\A^1_k\to\A^1_k$ have a root at a closed point $p$, defined by a monic, irreducible polynomial $m(x)$ of some degree $n$. Let $t\in k(p)$ be a primitive element for the field extension $k(p)/k$. Then $f(x) = u(x) m(x)^d$ for some polynomial $u\not\in m(x)\cdot k[x]$, and
\begin{align*}
    \deg_p^{\A^1}(f) = \begin{cases} \frac{nd}{2} \mathbb{H} & d\text{ is even} \\ \frac{n(d-1)}{2} \mathbb{H} + \tau_k^{k(p)}(t) \left\langle u(t) \right\rangle & d\text{ is odd}. \end{cases}
\end{align*}
\end{corollary}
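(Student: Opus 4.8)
The plan is to read this off from \autoref{thm:main} together with the classical computation of the local $\A^1$-degree of a univariate polynomial at a rational point. First I would deal with the factorization. Since $f$ has an isolated (hence nonzero) root at $p$ and $k[x]$ is a UFD in which $m(x)$ is irreducible, take $d\ge 1$ to be the largest integer with $m(x)^d\mid f(x)$ and set $u(x)=f(x)/m(x)^d$; by construction $m\nmid u$, which is precisely the assertion $u\notin m(x)\cdot k[x]$. Writing $t$ for the image of $x$ in $k(p)=k[x]/(m)$ — a primitive element, for which $\til{p}=V(x-t)$ is the canonical $k(p)$-point lying over $p$ — the condition $m\nmid u$ says exactly that $u(t)\neq 0$ in $k(p)$, so the class $\langle u(t)\rangle\in\GW(k(p))$ is defined.

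Next I would invoke \autoref{thm:main} to write $\deg_p^{\A^1}(f)=\tau_k^{k(p)}(t)\,\deg_{\til{p}}^{\A^1}(\fg)$, reducing the problem to computing $\deg_{\til{p}}^{\A^1}(\fg)$ and then applying the transfer. From the construction of the geometric lift, $\fg\in k(p)[x]$ vanishes to order exactly $d$ at the rational point $\til{p}=V(x-t)$ (exactly $d$, using $u(t)\neq 0$), and the class in $k(p)^{\times}/(k(p)^{\times})^{2}$ of its leading coefficient $\bigl(\fg(x)/(x-t)^{d}\bigr)\big|_{x=t}$ agrees with that of $u(t)$. I would then use the standard fact that for $g=(x-a)^{e}h$ over a field $K$ with $h(a)\neq 0$ one has $\deg_{a}^{\A^1}(g)=\tfrac{e}{2}\mathbb{H}$ when $e$ is even and $\deg_{a}^{\A^1}(g)=\tfrac{e-1}{2}\mathbb{H}+\langle h(a)\rangle$ when $e$ is odd (reduce to $g=h(a)(x-a)^{e}$ by a local coordinate change and evaluate the Scheja--Storch form on $K[x]/(x-a)^{e}$). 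Applied to $\fg$ this yields
\[
\deg_{\til{p}}^{\A^1}(\fg)=\begin{cases}\tfrac{d}{2}\mathbb{H}, & d\text{ even},\\[3pt] \tfrac{d-1}{2}\mathbb{H}+\langle u(t)\rangle, & d\text{ odd}.\end{cases}
\]

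Finally I would push this through the geometric transfer. The map $\tau_k^{k(p)}(t)$ is additive and $\GW(k)$-linear (projection formula), and being a Scharlau-type transfer it carries metabolic — hence, over a field, hyperbolic — forms to hyperbolic forms; since $[k(p):k]=n$ and $\mathbb{H}_{k(p)}$ has rank $2$, this forces $\tau_k^{k(p)}(t)(\mathbb{H}_{k(p)})=n\,\mathbb{H}$. Hence when $d$ is even $\tau_k^{k(p)}(t)\bigl(\tfrac{d}{2}\mathbb{H}\bigr)=\tfrac{nd}{2}\mathbb{H}$, and when $d$ is odd $\tau_k^{k(p)}(t)\bigl(\tfrac{d-1}{2}\mathbb{H}+\langle u(t)\rangle\bigr)=\tfrac{n(d-1)}{2}\mathbb{H}+\tau_k^{k(p)}(t)\langle u(t)\rangle$, which is the claimed identity.

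The hard part, once \autoref{thm:main} is granted, is really just the univariate local degree formula in small characteristic: the reduction to $h(a)(x-a)^{e}$ uses an $e$-th root and the even/odd dichotomy presupposes $\Char K\neq 2$, so when $\Char K$ divides $2d$ one would instead compute the Eisenbud--Khimshiashvili--Levine form of $\fg$ on $k(p)[x]/(x-t)^{d}$ directly. Checking that the relevant leading unit is $u(t)$ modulo squares is the point most sensitive to the explicit shape of $\fg$ supplied by \autoref{lem:geometric-lift}, though \autoref{thm:main} essentially forces it; the transfer step and the compatibility with $\mathbb{H}$ are routine.
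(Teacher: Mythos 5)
Your proposal is correct, but it routes the argument differently from the paper, and the comparison is worth spelling out. You reduce via \autoref{thm:main} (really \autoref{lem:geometric-lift}) to $\tau_k^{k(p)}(t)\deg_{\til{p}}^{\A^1}(\fg)$, compute $\deg_{\til{p}}^{\A^1}(\fg)$ over $k(p)$ as $\tfrac{d}{2}\mathbb{H}$ or $\tfrac{d-1}{2}\mathbb{H}+\langle u(t)\rangle$, and then push each summand through the transfer, using the projection formula and preservation of metabolic forms to see $\tau_k^{k(p)}(t)(\mathbb{H}_{k(p)})=n\,\mathbb{H}_k$. That rank-counting step is sound over fields of characteristic $\ne 2$, and it lets you skip the block-Hankel machinery entirely. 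The paper does something more concrete: it never splits $\deg_{\til{p}}^{\A^1}(\fg)$ into its class; instead it starts from the explicit Gram matrix \autoref{eqn:Bez-ftil-monomial}, applies the Scharlau form entrywise in blocks via \autoref{lem:scharlau-trace-gram-matx}, and then block-diagonalizes the resulting Hankel-block matrix with \autoref{lem:block-hankel-diag}. Your approach buys conceptual transparency (everything reduces to ``transfers take hyperbolics to hyperbolics'' plus additivity), at the cost of relying on general transfer formalism; the paper's buys an explicit Gram matrix for $\deg_p^{\A^1}(f)$, which is also what feeds the unstable refinement in \autoref{rem:unstable}. Both routes need $\Char k\ne 2$ in the end, since \autoref{lem:block-hankel-diag} (and the metabolic-implies-hyperbolic step on your side) assume it.

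One genuine gap in your write-up: the purported proof of the ``standard fact'' for $\deg_a^{\A^1}((x-a)^e h)$ by a coordinate change to $h(a)(x-a)^e$ doesn't work as stated, because as you yourself note it would require extracting an $e$-th root of $h(x)/h(a)$. Fortunately that fact is precisely \autoref{lem:local-deg-geom-lift}, whose proof is a characteristic-independent B\'ezoutian computation followed by \autoref{prop:upper-Hankel-form}; you should cite that rather than re-derive it. With that substitution, the rest of your argument goes through.
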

In particular, the rank of the non-hyperbolic part of $\deg_p^{\A^1}(f)$ is bounded above by $[k(p):k]$.

Another immediate corollary provides a connection between motivic degrees and scaled trace forms or scaled Scharlau forms.
\begin{corollary}[\textit{Scaled trace and Scharlau forms are $\A^1$-degrees}] 
Let $L/k$ be a finite, separable field extension with primitive element $t$. Then for any $\left\langle a \right\rangle\in \GW(L)$, the scaled trace form $\Tr_k^{L} \left\langle a \right\rangle$ and the scaled Scharlau form $\tau_k^{L}(t) \left\langle a \right\rangle$ are given by the local $\A^1$-degree of an endomorphism of $\A^1_k$.
\end{corollary}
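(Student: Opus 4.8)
The plan is to realize each of the two forms as the local $\A^1$-degree of a polynomial of the shape $u(x)m(x)$ at the closed point cut out by the minimal polynomial of $t$, and then to read off the degree directly from \autoref{thm:main} and \autoref{cor:bound on non-hyperbolic}. To set up, write $m(x)\in k[x]$ for the minimal polynomial of the primitive element $t$, so that $m$ is monic and irreducible of degree $n=[L:k]$ and evaluation at $t$ furnishes a surjection $\ev_t\colon k[x]\tto k[t]=L$. Let $p$ be the closed point of $\A^1_k$ defined by $(m(x))$; then $k(p)\cong L$ compatibly with $\ev_t$, and the canonical $L$-rational point $\til{p}$ lying over $p$ is the zero of $x-t$.

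First I would treat the Scharlau form. Given $\langle a\rangle\in\GW(L)$ with $a\in L^{\times}$, choose $u(x)\in k[x]$ with $\ev_t(u)=a$, which is possible since $\ev_t$ is surjective; because $a\neq 0$ we automatically have $u\notin m(x)\cdot k[x]$. Then $f\coloneqq u\cdot m$ has an isolated root at $p$, and in the notation of \autoref{cor:bound on non-hyperbolic} we are in the case $d=1$, which is odd, so
\[ \deg_p^{\A^1}(f)=\tau_k^L(t)\langle u(t)\rangle=\tau_k^L(t)\langle a\rangle. \]
For the trace form I would use separability: since $L/k$ is separable, $m'(t)\neq 0$. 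Choose $v(x)\in k[x]$ with $\ev_t(v)=a\,m'(t)^{-1}$ and set $g\coloneqq v\cdot m$; again $g$ has an isolated root at $p$ with $d=1$. By \autoref{thm:main} we have $\deg_p^{\A^1}(g)=\Tr_k^L\deg_{\til{p}}^{\A^1}(\fc)$, and in the separable setting the cohomological lift $\fc$ coincides with the base change $g_L$. Factoring $m(x)=(x-t)\widetilde m(x)$ over $L$, one has $\widetilde m(t)=m'(t)$, so $g_L=(x-t)\cdot\bigl(v(x)\widetilde m(x)\bigr)$ has a simple zero at $t$ with $v(t)\widetilde m(t)=a\neq 0$; therefore $\deg_{\til{p}}^{\A^1}(g_L)=\langle v(t)\widetilde m(t)\rangle=\langle a\,m'(t)^{-1}\cdot m'(t)\rangle=\langle a\rangle$, whence $\deg_p^{\A^1}(g)=\Tr_k^L\langle a\rangle$.

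The two inputs that need care are both mild. The first is the identification of the cohomological lift with the ordinary base change over a separable residue field, but this is precisely the comparison recalled just before \autoref{thm:main}, so it is available. The second is the computation of the local $\A^1$-degree of a univariate polynomial at a simple rational zero, namely $\deg_t^{\A^1}\bigl((x-t)h\bigr)=\langle h(t)\rangle$ whenever $h(t)\neq 0$; this is standard, following from $\A^1$-invariance together with the normalization $\deg_0^{\A^1}(cx)=\langle c\rangle$, and I would record it as a short lemma. I expect no genuine obstacle: separability enters in exactly one place, to invert $m'(t)$ when solving for $v(t)$, and indeed the Scharlau-form half of the statement needs no separability hypothesis at all.
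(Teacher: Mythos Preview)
Your argument is correct and follows the paper's strategy: realize each form as $\deg_p^{\A^1}(u\cdot m)$ for a suitable $u\in k[x]$ with $d=1$, then read off the answer from \autoref{thm:main} or \autoref{cor:bound on non-hyperbolic}. The details differ in two small but noteworthy ways.

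For the Scharlau form, the paper (\autoref{prop:scaled-scharlau-forms}) takes $a(x)m(x)$, passes through the cohomological lift, computes $\deg_t^{\A^1}(h_L)=\langle a\,m'(t)\rangle$ via the derivative formula, and then converts $\Tr_k^L$ back to $\tau_k^L(t)$ using Hoyois's relation $\Tr_k^L(\beta)=\tau_k^L(t)(\langle m'(t)\rangle\beta)$. Your route via \autoref{cor:bound on non-hyperbolic} with $d=1$ is shorter, avoids both the derivative computation and Hoyois's lemma, and---as you point out---does not use separability at all; the paper's version does. For the trace form, the paper (\autoref{prop:scaled-trace-forms}) instead uses the \emph{geometric} lift of the explicit polynomial $a(x)m'(x)m(x)$ and again appeals to Hoyois's lemma to trade $\tau_k^L(t)\langle a\,m'(t)\rangle$ for $\Tr_k^L\langle a\rangle$; you absorb the factor $m'(t)$ into the choice of $v$ (taking $v(t)=a\,m'(t)^{-1}$) and work with the cohomological lift directly. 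Your polynomial is marginally less explicit, but you bypass the auxiliary lemma. Both approaches are equally valid; yours is a touch more economical, while the paper's gives named, fully explicit polynomials in each case.
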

Combined with the main result of \cite{BMP21}, this provides a method for computing scaled trace forms via B\'ezoutians.

\subsection{Outline}
We begin with some exposition on purity and the six functor formalism in \autoref{sec:purity}. In \autoref{sec:transfers}, we recall some basic material on transfers in stable motivic homotopy theory and give evidence suggesting the existence of lifts. We discuss relevant commutative and linear algebraic tools in \autoref{sec:algebra}. Finally, we define geometric and cohomological lifts of univariate polynomials, prove \autoref{thm:main}, and discuss applications to trace forms in \autoref{sec:lifts}.

\subsection{Acknowledgements}
We thank Tom Bachmann, Fr\'ed\'eric D\'eglise, Marc Hoyois, and Kirsten Wickelgren for their insightful comments about transfers, as well as David Harbater for helpful correspondence related to commutative algebra. The first named author is supported by an NSF Graduate Research
Fellowship (DGE-1845298). The second named author received support from Kirsten Wickelgren's NSF CAREER grant (DMS-1552730).

\section{Purity and the six functor formalism}\label{sec:purity}
In this section, we recall the six functor formalism in stable motivic homotopy theory. We also discuss the (previously established) reformulation of Morel--Voevodsky's purity theorem in terms of the six functors formalism. Throughout this section, we will assume that $k$ is a field finitely generated over a perfect field. This assumption will not be necessary when we arrive at our main results later in the paper.

Assigned to any scheme $X$, there is a stable symmetric monoidal category $\SH(X)$ of motivic spectra. Given any morphism $f: X \to Y$, there is an adjunction
\begin{align*}
    f^\ast : \SH(Y) \leftrightarrows \SH(X) : f_\ast,
\end{align*}
where $f^\ast$ is symmetric monoidal (in particular, it preserves sphere spectra: $f^\ast \mathbf{1}_Y = \mathbf{1}_X$). If $f$ is smooth, then $f^\ast$ admits a left adjoint, denoted $f_\sharp$, which is a ``forgetful'' functor. Finally, if $f$ is locally of finite type, then there is an exceptional adjunction
\begin{align*}
    f_! : \SH(X) \leftrightarrows \SH(Y) : f^!.
\end{align*}
When $f$ is a sufficiently nice morphism, many of these functors are isomorphic. If $f$ is proper, then there is a natural isomorphism $f_\ast \simeq f_!$, while if $f$ is \'etale, we have a natural isomorphism $f_! \simeq f_\sharp$. In particular, if $f$ is proper and \'etale, then $f_*\simeq f_\sharp$. In the case where $f$ is an open immersion, we have a natural isomorphism $f^\ast \simeq f^!$. Given a cartesian square, there are various exchange isomorphisms which allow one to interchange various six functors operations. Finally, we have a motivic $J$-homomorphism $K(X) \to \Pic(\SH(X))$ mapping any $\xi$ to $\Sigma^\xi \mathbf{1}_X$, where $\Sigma^\xi$ is the \textit{Thom transformation} associated to $\xi$. If $\xi$ is a vector bundle over $X$, then $\Sigma^\xi$ can be seen as smashing with the Thom space $\Th(\xi)$. We refer the reader to \cite[\S2]{deloop2} and \cite[\S4.1]{BW} for more about the six functor formalism.

We will use the following well-known result.
\begin{proposition}\label{prop:cofiber} 
Let $\pi: X \to S$ be a smooth $S$-scheme. Let $i: Z \hookto X$ be a closed immersion (not necessarily smooth over $S$). Then we have a canonical $\A^1$-homotopy equivalence in $\SH(X)$:
\begin{align*}
    \Sigma^\infty \frac{X}{X-Z} \simeq i_\ast \mathbf{1}_Z.
\end{align*}
\end{proposition}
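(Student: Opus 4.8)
The plan is to realize the quotient $X/(X-Z)$ as the cofiber of the open immersion $j : X - Z \hookrightarrow X$ complementary to $i$, and then identify this cofiber with $i_\ast \mathbf{1}_Z$ using the localization (recollement) sequence associated to the closed–open decomposition $(i,j)$. Concretely, there is a cofiber sequence in $\SH(X)$ of the form
\begin{align*}
    j_! j^\ast \mathbf{1}_X \to \mathbf{1}_X \to i_\ast i^\ast \mathbf{1}_X,
\end{align*}
where the first map is the counit of the $(j_!, j^!)$-adjunction (recalling $j^! \simeq j^\ast$ for the open immersion $j$, so that $j_!j^\ast \simeq j_\sharp j^\ast$) and the second is the unit of the $(i^\ast, i_\ast)$-adjunction. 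Since $i^\ast$ is symmetric monoidal it preserves sphere spectra, $i^\ast \mathbf{1}_X \simeq \mathbf{1}_Z$, so the cofiber of $j_! j^\ast \mathbf{1}_X \to \mathbf{1}_X$ is exactly $i_\ast \mathbf{1}_Z$.

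The remaining step is to match this abstract cofiber with the \emph{spatial} quotient $\Sigma^\infty X/(X-Z)$. First I would observe that, $\pi : X \to S$ being smooth, $\mathbf{1}_X \simeq \Sigma^\infty_+ X$ computed relative to $X$ (i.e. $\pi^\ast \mathbf{1}_S = \mathbf{1}_X$ and the suspension spectrum functor is symmetric monoidal), and likewise $j_! j^\ast \mathbf{1}_X \simeq j_\sharp \mathbf{1}_{X-Z} \simeq \Sigma^\infty_+(X-Z)$ in $\SH(X)$, where the identification uses that $j$ is an open immersion (hence smooth) and that $j_\sharp$ of the sphere is the suspension spectrum of the source viewed over $X$. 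Under these identifications the counit map $j_! j^\ast \mathbf{1}_X \to \mathbf{1}_X$ becomes the map of suspension spectra induced by the inclusion $X - Z \hookrightarrow X$, whose cofiber is by definition $\Sigma^\infty$ of the pointed space $X/(X-Z)$. Comparing the two cofiber sequences yields the desired equivalence $\Sigma^\infty X/(X-Z) \simeq i_\ast \mathbf{1}_Z$, and naturality of the localization sequence makes it canonical.

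The main obstacle is bookkeeping rather than conceptual: one must be careful that all the functors and identifications take place in $\SH(X)$ (not $\SH(S)$ or $\SH(Z)$), and in particular that "$\Sigma^\infty_+ Y$" for $Y$ smooth over $X$ is interpreted as $p_\sharp \mathbf{1}_Y$ for $p : Y \to X$, so that the counit of $(j_!, j^!)$ really is the suspension of the space-level inclusion. The input that makes this clean is the gluing/localization fiber sequence for the complementary pair $(i, j)$, which is part of the standard six-functor package (it follows from $j^\ast i_\ast = 0$ together with the adjunctions), so no smoothness of $Z$ over $S$ is needed — only that $i$ is a closed immersion with open complement $j$. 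Once the localization sequence is in hand, the identification of the terms is routine.
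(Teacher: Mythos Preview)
Your proposal is correct and follows essentially the same route as the paper: both invoke the localization cofiber sequence $j_! j^! \to \id \to i_\ast i^\ast$ for the complementary pair $(i,j)$, evaluate it at $\mathbf{1}_X$, and identify $j_\sharp \mathbf{1}_{X-Z}$ with $\Sigma^\infty_+(X-Z)$ in $\SH(X)$. The only difference is cosmetic---you spell out more carefully why the counit is the suspension of the inclusion and remark that smoothness of $Z$ is irrelevant, while the paper records the argument more tersely.
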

\begin{proof} Denote by $j: X-Z \hookto X$ the open immersion of the complement of $Z$. The localization theorem (see \cite[Theorem 2.21, p. 114]{MV} and \cite[\S1]{Hoyois-localization}) then gives an exact sequence
\begin{align*}
    j_! j^! \to \id \to i_\ast i^\ast.
\end{align*}
As $j$ is an open immersion, we have that $j_! j^! \simeq j_\sharp j^\ast$. Applying this exact sequence at the sphere spectrum, we obtain
\begin{align*}
    j_\sharp j^\ast \mathbf{1}_X \to \mathbf{1}_X \to i_\ast i^\ast \mathbf{1}_X.
\end{align*}
We have that $j_\sharp j^\ast \mathbf{1}_X = j_\sharp \mathbf{1}_{X-Z}$, which is $\Sigma_+^\infty (X-Z)$ in $\SH(X)$. This implies that $i_\ast \mathbf{1}_Z$ is the cofiber of the natural inclusion $X-Z \hookto X$.
\end{proof}

\begin{definition}\cite[\S2.5]{DJK}
Let $f: X \to Y$ be a morphism that is smoothable, local complete intersection (lci), and separated of finite type. Let $\mathcal{L}_f$ be the cotangent complex of $f$. There is then a natural transformation
\begin{align*}
    \mathfrak{p}_f : \Sigma^{\mathcal{L}_f} f^\ast \to f^!,
\end{align*}
which is called the \textit{purity transformation}.
\end{definition}
If $f$ is smooth, then $\mf{p}_f$ is a natural isomorphism. While the purity transformation generally fails to be a natural isomorphism when $f$ is not smooth, some of its components may still be isomorphisms. That is, there may be spectra $E$ such that the map $\Sigma^{\mathcal{L}_f} f^\ast E \to f^! E$ is invertible. 

\begin{definition}\cite[Definition 4.3.7]{DJK}
A spectrum $E$ is called \textit{$f$-pure} if the component of purity $\Sigma^{\mc{L}_f}f^\ast E\to f^!E$ is invertible.
\end{definition}

\begin{proposition}\cite[Proposition 4.3.10]{DJK}\label{prop:f-pure}
Let $f: X \to Y$ be a smoothable, separated morphism of finite type between regular $k$-schemes. Assume that $E \in \SH(k)$ is a motivic spectrum pulled back from a motivic spectrum defined over a perfect subfield of $k$. Let $\pi: Y \to \Spec k$ denote the structure map. Then $f$ is lci and $\pi^\ast E$ is $f$-pure.
\end{proposition}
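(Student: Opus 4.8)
The plan is to establish the two assertions in turn. For the claim that $f$ is lci, I would work locally and use the smoothability hypothesis: around any point $f$ factors as a closed immersion $i\colon X\hookrightarrow P$ followed by a smooth morphism $p\colon P\to Y$. Since $Y$ is regular and $p$ is smooth, $P$ is regular, so $i$ is a closed immersion of regular Noetherian schemes; such an immersion is automatically regular (in a regular local ring $R$, an ideal $I$ with $R/I$ regular is generated by part of a regular system of parameters), hence $i$ is an lci immersion and $f=p\circ i$ is lci. In particular $f$ is smoothable, lci, separated, and of finite type, so the purity transformation $\mathfrak p_f$ is defined.

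For $f$-purity, the idea is to exploit the hypothesis on $E$ together with the principle that a regular scheme of finite type over a perfect field is smooth over that field. Write $E=\iota^\ast E_0$ with $\iota\colon\Spec k\to\Spec k_0$, $k_0$ perfect, and $E_0\in\SH(k_0)$. Using the standing assumption that $k$ is finitely generated over a perfect field, I would first enlarge $k_0$ so as to assume in addition that $k/k_0$ is finitely generated: replace $k_0$ by its compositum with a perfect field over which $k$ is finitely generated — a compositum of perfect subfields of $k$ is perfect, $k$ is still finitely generated over it, and $E$ remains a pullback from the enlarged field. (Without the standing assumption, one reaches the same reduction by invoking continuity of $\SH$ along the filtered system of finitely generated subextensions of $k/k_0$.) Now $X$ and $Y$ are regular and of finite type over the perfect field $k_0$, hence smooth over $k_0$; let $a\colon X\to\Spec k_0$ and $b\colon Y\to\Spec k_0$ denote the (smooth) structure maps, so that $a=b\circ f$ and $\pi_Y^\ast E=b^\ast E_0$.

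Finally I would run the following formal argument. By compatibility of the purity transformation with composition (via the cotangent-complex triangle for $X\xrightarrow{f}Y\xrightarrow{b}\Spec k_0$, see \cite{DJK}), the equivalence $\mathfrak p_a(E_0)$ factors as
\[
\Sigma^{\mathcal L_a}a^\ast E_0 \;\simeq\; \Sigma^{\mathcal L_f}f^\ast\Sigma^{\mathcal L_b}b^\ast E_0 \;\xrightarrow{\ \Sigma^{\mathcal L_f}f^\ast\mathfrak p_b(E_0)\ }\; \Sigma^{\mathcal L_f}f^\ast b^!E_0 \;\xrightarrow{\ \mathfrak p_f(b^!E_0)\ }\; f^!b^!E_0 \;\simeq\; a^!E_0 .
\]
Since $a$ and $b$ are smooth, $\mathfrak p_a$ and $\mathfrak p_b$ are invertible, and therefore so is $\mathfrak p_f$ evaluated at $b^!E_0\simeq\Sigma^{\mathcal L_b}b^\ast E_0=\Sigma^{\mathcal L_b}\pi_Y^\ast E$. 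By compatibility of $\mathfrak p_f$ with Thom twists by vector bundles pulled back from $Y$, one has $\mathfrak p_f(\Sigma^{\mathcal L_b}\pi_Y^\ast E)\simeq\Sigma^{f^\ast\mathcal L_b}\mathfrak p_f(\pi_Y^\ast E)$, and since $\Sigma^{f^\ast\mathcal L_b}$ is an autoequivalence, $\mathfrak p_f(\pi_Y^\ast E)$ is invertible; that is, $\pi_Y^\ast E$ is $f$-pure.

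I expect the main obstacle to be bookkeeping rather than conceptual: one must pin down the precise form of the composition law for $\mathfrak p_{(-)}$ and its interaction with Thom twists (both established in \cite{DJK}), and, in the reduction step, verify that enlarging the perfect subfield — or, alternatively, the continuity argument — genuinely preserves all hypotheses: regularity of $X$ and $Y$, finite generation of the base field, and the properties smoothable/lci/separated/finite type of $f$. An alternative, more hands-on route would factor $f$ through a regular closed immersion and then apply deformation to the normal cone to reduce to the zero section of a vector bundle, where purity holds unconditionally; but controlling the requisite $\A^1$-invariance of $\pi_Y^\ast E$ along the deformation still uses the perfect-subfield hypothesis, so this does not obviously simplify matters.
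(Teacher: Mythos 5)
The paper does not prove this proposition: it is stated as a direct citation of \cite[Proposition 4.3.10]{DJK}, and the surrounding text simply specializes it to the structure map $q\colon\Spec k(p)\to\Spec k$. So there is no in-paper proof for your attempt to be measured against. Your reconstruction is, however, a correct account of the argument underlying the cited result: $f$ is lci because a closed immersion of regular Noetherian schemes is automatically a regular immersion, and $f$-purity follows by arranging for a perfect subfield $k_0$ over which $X$ and $Y$ become smooth and then combining the composition law for the purity natural transformation (associativity of fundamental classes in \cite{DJK}) with compatibility of $\mathfrak p_f$ with Thom twists along $\Sigma^{\mathcal L_b}$. The compositum-of-perfect-subfields reduction is handled correctly.

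One hypothesis should be made explicit. Your reduction requires $X$ and $Y$ to be of finite type over the enlarged perfect field $k_0$, hence over $k$; but the proposition as quoted only assumes that $f$ itself is of finite type. The finite-type hypothesis on $X$ and $Y$ holds in the paper's application (where $X=\Spec k(p)$ and $Y=\Spec k$) and is consonant with the section's standing assumption, but a self-contained proof should record it --- or else replace the ``regular and of finite type over a perfect field $\Rightarrow$ smooth'' step with the more robust machinery (Popescu's theorem and a continuity argument) that \cite{DJK} use precisely to avoid such finiteness constraints. Beyond this, the identification of your displayed composite with $\mathfrak p_a(E_0)$, as opposed to merely some map between the same objects, is exactly the associativity of the system of purity transformations; you correctly flag this as the key input from \cite{DJK}, and it deserves a precise citation rather than a gesture.
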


In particular, consider the map $q: \Spec k(p) \to \Spec k$ of regular $k$-schemes. This map satisfies the conditions of \autoref{prop:f-pure}, and since $\mathbf{1}_k$ is pulled back from any perfect subfield of $k$, the canonical purity morphism
\begin{equation}\label{eqn:purity-for-q}
\begin{aligned}
    \Sigma^{\mathcal{L}_q} q^\ast \mathbf{1}_k \xto{\sim} q^! \mathbf{1}_k
\end{aligned}
\end{equation}
is invertible. It is well-known that the purity isomorphism in \autoref{eqn:purity-for-q} subsumes the foundational theorem of Morel and Voevodsky \cite[Theorem 2.23, p. 115]{MV}. We will briefly discuss how to see this. Let $S$ be a scheme, and let $X$ and $Z$ be smooth $S$-schemes. Consider a (not necessarily smooth) closed immersion $i: Z \hookto X$:
\[ \begin{tikzcd}
    Z\rar[hook,"i" above]\ar[dr,"g" below left] & X\dar["f" right]\\
     & S.
\end{tikzcd} \]
From the short exact sequence $f^\ast \mathcal{L}_i \to \mathcal{L}_g \to \mathcal{L}_g$, we have the equality $\mathcal{L}_g = \mathcal{L}_{i\circ f} = i^\ast \mathcal{L}_f + \mathcal{L}_i$ in $K(Z)$. Let $\mathcal{N}_i$ be the normal bundle of $Z$ in $X$. Since $\mathcal{N}_i[1]=\mathcal{L}_i$ in $K(Z)$, we have
\begin{align*}
    \Sigma^{-i^\ast \mathcal{L}_f} \Sigma^{\mathcal{L}_g} g^\ast &= \Sigma^{\mathcal{L}_i} g^\ast = \Sigma^{- \mathcal{N}_i} g^\ast.
\end{align*}
We now apply purity to both $g$ and $f$ to obtain
\begin{align*}
    \Sigma^{-i^\ast \mathcal{L}_f} \Sigma^{\mathcal{L}_g} g^\ast &\cong \Sigma^{-i^\ast \mathcal{L}_f} g^! = \Sigma^{-i^\ast \mathcal{L}_f} i^! f^! \\
    &\cong i^! \Sigma^{-\mathcal{L}_f} f^! \cong i^! \Sigma^{- \mathcal{L}_f} \Sigma^{\mathcal{L}_f} f^\ast \cong i^! f^\ast.
\end{align*}
Thus we have a natural isomorphism $\Sigma^{- \mathcal{N}_i} g^\ast \cong i^! f^\ast$. Passing to left adjoints, we obtain $g_\sharp \Sigma^{\mathcal{N}_i} \cong f_\sharp i_\ast$. Finally, we consider the component of this equivalence at the sphere spectrum. By \autoref{prop:cofiber}, we have that $i_\ast \mathbf{1}_Z = \Sigma^\infty \frac{X}{X-Z}$ as $X$-motivic spectra. Forgetting along $f$ gives us $f_\sharp i_\ast\mathbf{1}_Z=f_\sharp\Sigma^\infty\frac{X}{X-Z}$ in $\SH(S)$. Conversely, we have that $\Sigma^{\mathcal{N}_i} \mathbf{1}_Z = \Th(\mathcal{N}_i)$ in $\SH(Z)$. Forgetting along $g$ gives us $g_\sharp\Th(\mathcal{N}_i)=g_\sharp\Sigma^{\mathcal{N}_i}\mathbf{1}_Z$ in $\SH(S)$. Since $g_\sharp\Sigma^{\mathcal{N}_i}\mathbf{1}_Z\simeq f_\sharp i_\ast\mathbf{1}_Z$, we have the following equivalence in $\SH(S)$:
\begin{align*}
    g_\sharp\Th \left( \mathcal{N}_i \right)\simeq f_\sharp\Sigma^\infty\tfrac{X}{X-Z}.
\end{align*}

\section{Transfers}\label{sec:transfers}
In this section we discuss transfers arising in stable motivic homotopy theory, as well as their algebraic incarnations for Grothendieck--Witt groups.

Given a finite simple extension, residue homomorphisms induce a transfer called the \textit{geometric transfer} \cite[\S 4.2]{Morel} arising in Milnor--Witt $K$-theory. The geometric transfer can alternatively be defined using motivic spaces. In an attempt to extend this definition to finite field extensions, one might naively factor a finitely generated field extension $k \subseteq L$ into a composite of simple field extensions, and then compose geometric transfers. However, such a composition of geometric transfers will depend on the choice of factorization, indicating that the geometric transfer is not functorial along arbitrary finite field extensions. This can be rectified by multiplication by a certain rank one bilinear form, built out of the choice of primitive element of the extension, yielding the \textit{cohomological transfer} \cite[\S 4.2]{Morel}. Alternatively, by incorporating all possible such factorizations simultaneously, one obtains a transfer along \textit{twisted} Grothendieck--Witt rings, called the \textit{absolute transfer} \cite[\S 5.1]{Morel}.

Throughout this section, we will maintain our assumption from \autoref{sec:purity} that $k$ is finitely generated over a perfect field, which allows us to align the absolute transfer with Gysin maps. This assumption can be dropped in latter sections where our main results are proved.

\subsection{Geometric transfers}
In Milnor $K$-theory, the \textit{residue homomorphisms} associated to discrete valuations enable the construction of transfers along field extensions. In Milnor--Witt $K$-theory, first defined by Hopkins and Morel, residue homomorphisms are still available, but ambiguities arise corresponding to a choice of uniformizing parameter. In degree zero, the Milnor--Witt $K$-theory of a field is the Grothendieck--Witt ring $\GW(k)$, so these residue homomorphisms permit us to define transfers of symmetric bilinear forms along finite simple field extensions.

Suppose that $p \in \A^1_k$ is a closed point, so that $k(p)/k$ is a finite simple field extension. Let $t\in k(p)$ be a primitive element of the extension with minimal polynomial $m(x)\in k[x]$. Considering the affine line as a subspace of the projective line with global sections $k(x)$, the minimal polynomial $m$ of $p\in \A^1_k \subseteq \P^1_k$ defines a discrete valuation on $k(x)$. With $m(x)$ as a uniformizing parameter, we obtain a residue homomorphism
\begin{align*}
    K_1^\MW(k(x)) \xto{\partial_p} \GW(k(p)).
\end{align*}
We additionally have a residue homomorphism  $-\partial_\infty: K_1^\MW(k(x)) \to \GW(k)$ for the point at infinity on the projective line, corresponding to the uniformizing parameter $-1/x$. Given a class $\alpha\in\GW(k(p))$, we may select an arbitrary preimage of $\alpha$ in $K_1^\MW(k(x))$ and then map to $\GW(k)$ along $-\partial_\infty$. It turns out that this defines a well-defined group homomorphism called the \textit{geometric transfer} \cite[\S4.2]{Morel}.

\begin{definition}\label{def:geom-transfer} 
The \textit{geometric transfer} for
 a finite simple extension $k(p)/k$ with primitive element $t$ is defined by
\begin{align*}
    \tau_k^{k(p)}(t) : \GW(k(p)) &\to \GW(k) \\
    \alpha &\mapsto -\del_\infty \left( \del_p^{-1}(\alpha) \right).
\end{align*}
\end{definition}

Turning our attention to motivic spaces, we can alternatively consider the composite of a collapse map and purity isomorphism to obtain a canonical map\footnote{Here we are using our assumption that $k$ is finitely generated over a perfect field in order to apply purity.}
\begin{align*}
    \P^1_k \to \frac{\P^1_k}{\P^1_k - p} \simeq \Th \left( \mathcal{N}_{p/\P^1_k} \right).
\end{align*}
The minimal polynomial of $p$ determines a non-canonical trivialization of the normal bundle, yielding an isomorphism $\Th\left( \mathcal{N}_{p/\P^1_k} \right) \simeq \Th \left( \mathcal{O}_{k(p)} \right)$. We now take cohomology (with coefficients in the Grothendieck--Witt sheaf) of the composite $\P^1_k \to \Th \left( \mathcal{O}_{k(p)} \right)$ to get a map $\GW(k(p)) \to \GW(k)$ that agrees with $\tau_k^{k(p)}(t)$. This geometric description motivates the terminology ``geometric transfer'' (see, for instance, \cite[\S4.2]{Morel}).

\begin{remark}
Note that any $k$-linear map $h:L \to k$ along a finite field extension will induce a transfer $h_\ast : \GW(L) \to \GW(k)$ by post-composition. It turns out that the geometric transfer is induced by a classical map called the \textit{Scharlau form}.
\end{remark} 

\begin{definition} 
Let $L/k$ be a finite simple extension with primitive element $t$. Then the \textit{Scharlau form} is the $k$-linear map $s:L \to k$ defined by
\begin{align*}
    s(t^j) &= \begin{cases} 1 & j=[L:k]-1, \\ 0 & \text{otherwise}. \end{cases}
\end{align*}
\end{definition}

\begin{lemma}\label{lem:geometric is scharlau}
\cite[Lemma 2.2]{Calmes-Fasel}, \cite[Lemma 5.10]{hoyois} Let $L/k$ be a finite simple extension with primitive element $t$, and let $s:L\to k$ be the Scharlau form associated to $t$. Then $\tau_k^{L}(t)=s_*$ as homomorphisms $\GW(L)\to \GW(k)$.
\end{lemma}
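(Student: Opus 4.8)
The plan is to argue by induction on $n=[L:k]$: at each stage one reduces the comparison to rank-one forms and then computes with residue homomorphisms in Milnor--Witt $K$-theory. The base case $n=1$ is immediate --- then $L=k$ and $t\in k$, so $s=\id$ and $s_*=\id$, while $[x-t]\in K_1^{\MW}(k(x))$ is a preimage of $\langle 1\rangle$ under $\partial_p$ with no other finite residues, whence $\tau_k^{k}(t)(\langle 1\rangle)=-\partial_\infty[x-t]=\langle 1\rangle$ and $\tau_k^{k}(t)=\id$ by $\GW(k)$-linearity. For $n>1$ I would first observe that both $\tau_k^{L}(t)$ and $s_*$ are additive homomorphisms $\GW(L)\to\GW(k)$, so it suffices to compare their values on the generators $\langle a\rangle$, $a\in L^\times$; and since $\langle b^2a\rangle=\langle a\rangle$ and both maps are $\GW(k)$-linear---$s_*$ because $s$ is $k$-linear, $\tau_k^{L}(t)$ by the projection formula, which is visible from \autoref{def:geom-transfer}---one may assume $a=\overline{f}$ for a monic $f\in k[x]$ coprime to $m$ that is either $1$ or squarefree of degree $d$ with $1\le d<n$.

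For $\langle 1\rangle$ (the case $f=1$) I would use the preimage $[m(x)]$. Morel's residue formulas \cite{Morel} give $\partial_p[m]=\langle 1\rangle$ and $\partial_q[m]=0$ at every other finite closed point, and---once the terms involving $[-1]$ cancel in the expansion at infinity---$-\partial_\infty[m]=n_\eps$, the rank-$n$ diagonal form $\langle 1,-1,1,\dots\rangle$. On the Scharlau side, the Gram matrix of $s_*(\langle 1\rangle)$ in the power basis $1,t,\dots,t^{n-1}$ is the Hankel matrix $\bigl[s(t^{i+j})\bigr]_{i,j}$, which vanishes strictly above its anti-diagonal and equals $1$ on it; since the anti-diagonal part determines the isometry class of such a form, $s_*(\langle 1\rangle)=n_\eps$ as well.

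For $\langle\overline{f}\rangle$ with $d\ge 1$ the idea is to take the preimage $\beta_0:=[m(x)f(x)]$ and apply Milnor--Witt reciprocity on $\P^1_k$, which expresses $-\partial_\infty$ as the sum of the geometric transfers $\tau_k^{k(q)}(\overline{x})$ of the residues at the finite closed points $q$. Morel's residue calculus gives $\partial_p\beta_0=\langle\overline{f}\rangle$, $\partial_q\beta_0=0$ at finite $q\neq p$ with $g_q\nmid f$, and $\partial_q\beta_0=\bigl\langle\overline{m\cdot(f/g_q)}\bigr\rangle\in\GW(k(q))$ when $g_q\mid f$ (using that $f$ is squarefree), while the expansion at infinity gives $-\partial_\infty\beta_0=(n+d)_\eps$. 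Therefore
\[
\tau_k^{L}(t)(\langle\overline{f}\rangle)=(n+d)_\eps-\sum_{g_q\mid f}\tau_k^{k(q)}(\overline{x})\bigl(\bigl\langle\overline{m\cdot(f/g_q)}\bigr\rangle\bigr).
\]
Each $g_q\mid f$ has $\deg g_q\le d<n$, so the induction hypothesis rewrites each inner transfer as the Scharlau transfer $s^{(q)}_*$ for $k(q)/k$, and what remains is to check that the right-hand side equals $s_*(\langle\overline{f}\rangle)$.

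This last step is the crux: it is a reciprocity statement for Scharlau forms---essentially the residue theorem applied to the $k$-algebra $k[x]/(mf)\cong L\times k[x]/(f)$, parallel to the Milnor--Witt reciprocity just used---which one can verify directly by expanding the Gram matrix $\bigl[s(\overline{f}\,t^{i+j})\bigr]_{i,j}$ of $s_*(\langle\overline{f}\rangle)$ along the anti-diagonal filtration, the top part contributing $(n+d)_\eps$ and the sub-anti-diagonal corrections collecting, prime by prime, into the terms $s^{(q)}_*\bigl(\bigl\langle\overline{m\cdot(f/g_q)}\bigr\rangle\bigr)$. I expect the genuine work to be exactly this reconciliation---matching the error terms thrown off by Morel's bilinearity and Steinberg relations with the classical residues of $\tfrac{f(x)}{m(x)}\,dx$ at the finite poles and with the identification $s(g)=-\operatorname{res}_\infty\!\bigl(\tfrac{g(x)}{m(x)}\,dx\bigr)$ of the Scharlau functional---which is what is carried out in \cite[Lemma 2.2]{Calmes-Fasel} and \cite[Lemma 5.10]{hoyois}, to whose arguments I would ultimately defer.
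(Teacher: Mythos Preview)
The paper does not prove this lemma at all; it is stated with citations to \cite[Lemma 2.2]{Calmes-Fasel} and \cite[Lemma 5.10]{hoyois} and no further argument. Your proposal therefore goes well beyond what the paper offers. The inductive scaffolding you set up is sound: both maps are additive and $\GW(k)$-linear (the projection formula for $\tau_k^L(t)$ is legitimate), so checking on $\langle\overline f\rangle$ with $f$ monic, squarefree, coprime to $m$, and of degree $<n$ suffices---any $a\in L^\times$ has a representative $f_0\in k[x]$ of degree $<n$, and passing to its squarefree part only lowers the degree. Your treatment of $\langle 1\rangle$ is correct on both sides, and the reciprocity manoeuvre for $\langle\overline f\rangle$ (choosing $\beta_0=[mf]$, isolating the contribution at $p$, and invoking the induction hypothesis at the primes of $f$) is a valid use of Morel's split exact sequence for $K_*^{\MW}(k(x))$.

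That said, you correctly flag that everything hinges on the identity
\[
s_*\langle\overline f\rangle \;=\; (n+d)_\eps \;-\; \sum_{g_q\mid f} s^{(q)}_*\bigl\langle\,\overline{m\cdot(f/g_q)}\,\bigr\rangle,
\]
and you do not prove it---you defer to the very references the paper cites. So your proposal is not a self-contained proof either; it is a reduction of the lemma to an equivalent (and not obviously easier) reciprocity statement for Scharlau transfers. It is also worth noting that the cited proofs do \emph{not} proceed by this induction: Hoyois works directly with the geometric description of the transfer (collapse map plus the trivialisation of the normal bundle by $m$) and reads off the Scharlau form from that identification, while Calm\`es--Fasel give a direct residue computation. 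If you want a genuinely independent argument, the cleanest route is to carry out the normal-bundle computation rather than to chase the inductive reciprocity.
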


This description allows us to understand explicitly the geometric transfer of any rank one form in $\GW(L)$. We first set up some notation.

\begin{notation}\label{notn:rank 1 lemma}
Let $L/k$ be a finite simple extension of degree $n$ with primitive element $t$, so that $B_{L/k}:=\{1,t,\ldots,t^{n-1}\}$ is a $k$-vector space basis of $L$. Given an $L$-vector space $V$ with basis $B_{V/L}:=\{a_1,\ldots,a_d\}$, the set $B_{V/k}:=\{a_i,a_it,\ldots,a_it^{n-1}\}_{i=1}^d$ is a $k$-basis of $V$.
\end{notation}

\begin{lemma}\label{lem:scharlau-trace-gram-matx} 
In the context of \autoref{notn:rank 1 lemma}, let $\beta:V\times V\to L$ be a symmetric bilinear form whose Gram matrix with respect to $B_{V/L}$ is $(\beta_{ij})_{i,j}$, and let $s: L \to k$ be the Scharlau form. Then the Gram matrix of $s_\ast \beta$ with respect to $B_{V/k}$ is a block matrix whose $(i,j)\textsuperscript{th}$ block is equal to the Gram matrix of $s_\ast \left\langle \beta_{ij} \right\rangle$ with respect to $B_{L/k}$.
\end{lemma}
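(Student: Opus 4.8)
The plan is to unwind both sides of the claimed equality directly from the definitions and compare entries. Recall that for a $k$-linear map $h\colon L\to k$ and an $L$-bilinear form $\gamma$ on an $L$-vector space $W$, the transfer $h_\ast\gamma$ is the $k$-bilinear form on the underlying $k$-vector space of $W$ given by $(v,w)\mapsto h(\gamma(v,w))$; in particular $s_\ast\beta$ is the $k$-bilinear form $(v,w)\mapsto s(\beta(v,w))$ on $V$, and $s_\ast\left\langle\beta_{ij}\right\rangle$ is the $k$-bilinear form $(x,y)\mapsto s(xy\,\beta_{ij})$ on $L$.

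First I would order the basis $B_{V/k}$ so that its elements appear in $d$ consecutive blocks, the $i$-th block being $(a_i, a_it,\dots,a_it^{n-1})$. With respect to this ordering the Gram matrix of $s_\ast\beta$ is an $n\times n$ block matrix (with $d\times d$ blocks), whose $(i,j)$-th block has $(r,c)$-entry
\[
s_\ast\beta\bigl(a_it^r,\ a_jt^c\bigr)=s\bigl(\beta(a_it^r,a_jt^c)\bigr)\qquad(0\le r,c\le n-1).
\]
Since $\beta$ is $L$-bilinear, the scalars $t^r,t^c\in L$ factor out, so $\beta(a_it^r,a_jt^c)=t^{r+c}\beta(a_i,a_j)=t^{r+c}\beta_{ij}$. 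Hence the $(i,j)$-th block equals $\bigl(s(t^{r+c}\beta_{ij})\bigr)_{0\le r,c\le n-1}$.

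On the other side, $\left\langle\beta_{ij}\right\rangle$ is the rank one $L$-bilinear form on $L$ represented by $(\beta_{ij})$ in the basis $\{1\}$, so by the same computation the Gram matrix of $s_\ast\left\langle\beta_{ij}\right\rangle$ with respect to $B_{L/k}=\{1,t,\dots,t^{n-1}\}$ has $(r,c)$-entry $s(t^r\cdot t^c\cdot\beta_{ij})=s(t^{r+c}\beta_{ij})$. This is precisely the $(i,j)$-th block found above, which proves the claim.

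There is no real obstacle here: the argument is pure bookkeeping. The two points that need to be stated carefully are the block ordering of $B_{V/k}$ (so that the block decomposition of the Gram matrix is the intended one) and the $L$-bilinearity of $\beta$, which is what allows scalars from $L$ — not merely from $k$ — to be pulled out of $\beta$.
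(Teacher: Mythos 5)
Your argument is correct and is essentially the same as the paper's: write out the $(i,j)$-th block entrywise, pull the scalars $t^r,t^c$ out of $\beta$ by $L$-bilinearity, and recognize the resulting matrix $\bigl(s(t^{r+c}\beta_{ij})\bigr)$ as the Gram matrix of $s_\ast\langle\beta_{ij}\rangle$ in the basis $\{1,t,\dots,t^{n-1}\}$. One small slip: the Gram matrix of $s_\ast\beta$ is a $d\times d$ block matrix whose blocks are $n\times n$ (not the other way around), which your indexing $0\le r,c\le n-1$ already makes clear.
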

\begin{proof} 
By definition, the $(i,j)\textsuperscript{th}$ block of $s_*\beta$ in the basis $B_{V/k}$ is given by
\begin{align*}
\begin{tabular}{C | C C C C}
    & a_j & a_j t & \cdots & a_j t^{n-1} \\
    \hline
    a_i & s(\beta(a_i, a_j)) & s(\beta(a_i, a_j t)) & \cdots & s(\beta(a_i, a_j t^{n-1})) \\
    a_i t & s(\beta(a_i t, a_j)) & s(\beta(a_i t, a_j t)) & \cdots & s(\beta(a_i,t a_j t^{n-1})) \\ 
    \vdots & \vdots & \vdots & \ddots & \vdots\\
    a_i t^{n-1} & s(\beta(a_i t^{n-1},a_j)) & s(\beta(a_i t^{n-1}, a_j t)) & \cdots & s(\beta(a_i t^{n-1}, a_j t^{n-1}).
    \end{tabular}
\end{align*}
Since $t\in L$ and $\beta$ is $L$-bilinear, we can rewrite this block as
\begin{align*}
\begin{tabular}{C | C C C C}
    & a_j & a_j t & \cdots & a_j t^{n-1} \\
    \hline
    a_i & s(\beta_{ij}) & s(t\beta_{ij}) & \cdots & s( t^{n-1} \beta_{ij}) \\
    a_i t & s(t \beta_{ij}) & s(t^2 \beta_{ij}) & \cdots & s(t^n \beta_{ij}) \\ 
    \vdots & \vdots & \vdots & \reflectbox{$\ddots$} & \vdots\\
    a_i t^{n-1} & s (t^{n-1} \beta_{ij})& s(t^n \beta_{ij}) & \cdots & s(t^{2n-2} \beta_{ij}),
    \end{tabular}
\end{align*}
which is precisely $s_\ast \left\langle \beta_{ij} \right\rangle$ with respect to the $k$-basis $\{1,t,\ldots,t^{n-1}\}$ of $L$.
\end{proof}

\subsection{Cohomological transfers} 
Let $L/k$ be a finite simple extension with primitive element $t\in L$, and take $m(x)\in k[x]$ to be the minimal polynomial of $t$. Let $p$ be the \textit{exponential characteristic} of $k$, which is defined to be $\Char{k}$ in positive characteristic and 1 in characteristic 0. We may factor the extension $L/k$ as
\begin{align*}
    k \subset L_\text{sep} = k[t^{p^i}] \subseteq L,
\end{align*}
for some $i$, where $L_\text{sep}$ is the separable closure of $k$ in $L$. This implies that $m(x) = m_0(x^{p^i})$ for some suitable $m_0(x)\in k[x]$. Note that $m_0(x)$ is the minimal polynomial of $t^{p^i}$ over $k$, and hence is separable. Moreover, if $L/k$ is separable, then $m_0(x)=m(x)$.




\begin{notation}\label{nota:omega}
Using the notation from the previous paragraph, we define a distinguished polynomial $\omega_0(x)\in L[x]$ associated to the extension $L/k$ by
\begin{align*}
    \omega_0(x) := \frac{m_0(x)}{x-t^{p^i}}.
\end{align*}
Note that $t^{p^i}$ is a root of $m_0(x)$ since $t$ is a root of $m(x)$, so $\omega_0(x)$ is indeed a polynomial. Since $m_0(x)$ is separable, we see that $\omega_0(t) \in L^\times$. We will use $\omega_0(x)$ to define the cohomological transfer in terms of the geometric transfer in \autoref{def:cohom-lift}.
\end{notation}

\begin{example}\label{ex:omega-purely-inseparable} Let $L/k$ be a finite purely inseparable extension in characteristic $p$. Then its minimal polynomial is by definition of the form $x^{p^r}-a$ for some $a\in k$, so $m_0(x) = x-a$ and therefore $\omega_0(x) = 1$.
\end{example}

\begin{example}\label{ex:omega-separable} Let $L/k$ be a finite separable extension with primitive element $t$, and let $m(x)$ be the minimal polynomial of $t$. Then $\omega_0(x) = \frac{m(x)}{(x-t)}$, so $\omega_0(t) = m'(t)$ by the product rule.
\end{example}

\begin{definition}\label{def:cohomological-transfer} \cite[Definition 4.26]{Morel} For a finite simple extension $L/k$ with primitive element $t$, the \textit{cohomological transfer} $\Tr_k^L$ is defined to be the composite
\[ \begin{tikzcd}
    \GW(L)\rar["{ \left\langle \omega_0(t) \right\rangle }" above]\ar[dr,"\Tr_k^L" below left] & \GW(L)\dar["\tau_k^L(t)" right]\\
     & \GW(k).
\end{tikzcd} \]
\end{definition}

Under nice conditions, $\Tr^L_k$ does not depend on the choice of primitive element $t$ \cite[Theorem 4.27]{Morel}. Moreover, \textit{loc.~cit.}~also implies that the cohomological transfer is functorial along field extensions outside of characteristic two, so we can define the cohomological transfer of an arbitrary finite extension as the composite of cohomological transfers over constituent simple extensions. For finite separable extensions, the cohomological transfer recovers the transfer on Grothendieck--Witt groups induced by the field trace \cite[Lemma 2.3]{Calmes-Fasel}. For purely inseparable extensions, the cohomological transfer and the geometric transfer coincide by \autoref{ex:omega-purely-inseparable}.

\subsection{Absolute transfers}
Let $L/k$ be a finite, purely inseparable extension. We can factor this into simple extensions
\begin{align*}
    k = L_0 \subseteq L_1 \subseteq \cdots \subseteq L_n = L.
\end{align*}
Let $t_i$ be a primitive element for the simple extension $L_i/L_{i-1}$. From this we obtain a composite of geometric transfers
\begin{align*}
    \tau_k^{L_1}(t_1)\circ \tau_{L_1}^{L_2}(t_2) \circ \cdots \circ \tau_{L_{n-1}}^L(t_n) : \GW(L) \to \GW(k).
\end{align*}
This composite transfer is not independent of the tuple $(t_1, \ldots, t_n)$. However, this transfer depends only on the class of the element $dt_1 \wedge \cdots \wedge dt_n$ in the determinant of the $L$-vector space of K\"{a}hler differentials of $L$ over $k$ \cite[\S5]{Morel}. Thus any class in $\omega_{L/k} :=\det \Omega_{L/k}$ provides a way to transfer from $L$ down to $k$. This perspective allows us to produce a well-defined \textit{absolute transfer}
\begin{align*}
    \Tr_k^{k(p)}\left(\omega_{L/k}\right) : \GW(L, \omega_{L/k}) \to \GW(k),
\end{align*}
where $\GW(L, \omega_{L/k})$ denotes the twisted Grothendieck--Witt group \cite[Definition 5.4]{Morel}. In the simple setting, $\omega_{L/k}$ is a one-dimensional $L$-vector space, and therefore isomorphic to $L$, inducing a group isomorphism $\GW(L,\omega_{L/k}) \cong \GW(L)$. This idea can be leveraged to canonically \textit{untwist} the absolute transfer in odd characteristic to obtain a transfer $\GW(L) \to \GW(k)$, which coincides with the cohomological transfer \cite[Remark 5.6]{Morel}.

It turns out that the absolute transfer is hiding in the background of the definition of the local $\A^1$-Brouwer degree. We will establish this fact after recalling the definition of the local degree.

\begin{definition}
A point $p\in\A^n_k$ is called an \textit{isolated zero} of a morphism $f:\A^n_k\to\A^n_k$ is $f(p)=0$ and $p$ is isolated in its fiber $f^{-1}(0)$.
\end{definition}

Let $f: \A^n_k \to \A^n_k$ be a morphism of affine space, and let $p \in \A^n_k$ be an isolated zero of $f$. By viewing $\A^n_k \subseteq \P^n_k$ as a subscheme of projective space via a standard chart, $f$ induces a map
\begin{align*}
    \bar{f}: \frac{\P^n_k}{\P^n_k - p} \to \frac{\P^n_k}{\P^n_k - 0}\simeq \frac{\P^n_k}{\P^{n-1}_k}.
\end{align*}
Precomposing with the collapse map $c_p: \P^n_k/\P^{n-1}_k \to \P^n_k/(\P^n_k - p)$ yields a morphism $f_p$ as in the following diagram:
\[ \begin{tikzcd}
    \P^n_k/(\P^n_k - p) \rar["\bar{f}" above] & \P^n_k/\P^{n-1}_k\\
    \P^n_k/\P^{n-1}_k\uar["c_p" left]\ar[ur,"f_p" below right] & 
\end{tikzcd} \]
\begin{definition} 
Let $f:\A^n_k\to\A^n_k$, and let $p$ be an isolated zero of $f$. The \textit{local $\A^1$-degree} $\deg^{\A^1}_p(f)$ of $f$ at $p$ is the image of the homotopy class of $f_p$ under Morel's degree map 
\[\deg^{\A^1} : \left[ \P^n_k/\P^{n-1}_k, \P^n_k/\P^{n-1}_k \right]_{\SH(k)} \to \GW(k).\]
\end{definition}

If $k(p)/k$ is separable, then the stable class of the collapse map admits a tractable description \cite[Lemma~13]{KW-EKL}. In particular, since $\Spec k(p)$ is a smooth $k$-scheme, purity gives an equivalence
\begin{align*}
    \frac{\P^n_k}{\P^n_k -p}\simeq \Th T_p \P^n_k \simeq \left( \frac{\P^n_k}{\P^{n-1}_k} \right) \smashprod \Spec k(p)_+.
\end{align*}
From this, one can prove that the collapse map is $\left( \P^n_k/\P^{n-1}_k \right) \smashprod \eta_{1_k}$, where $\eta: \id \to q_\ast q^\ast$ is the unit of the pushforward-pullback adjunction for the structure map $q: \Spec k(p) \to \Spec k$.

If $k(p)/k$ is not separable, we defer to the theory of Gysin maps in order to characterize the collapse map.

\begin{proposition}\label{prop:cohomology-supported-on-a-point} 
Let $E \in \SH(k)$ be a motivic spectrum. Then the compactly supported cohomology of $\P^n_k$ on $p \in \P^n_k$ is given by
\begin{align*}
    E_p \left( \P^n_k \right) = E \left( \frac{\P^n_k}{\P^n_k - p} \right).
\end{align*}
\end{proposition}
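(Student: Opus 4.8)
The plan is to identify both sides of the claimed equality with the mapping spectrum $\Map_{\SH(k)}(\mathbf{1}_{k(p)},\, i^!\pi^*E)$, where $\pi\colon\P^n_k\to\Spec k$ is the structure map and $i\colon\Spec k(p)\hookto\P^n_k$ is the closed immersion of $p$. The point to keep in mind is that $i$ need not be smooth — this is exactly the inseparable case — so one wants a tool that is insensitive to this, and \autoref{prop:cofiber} (whose $Z$ is allowed to fail to be smooth over the base) is precisely such a tool. I would first fix conventions: $E_p(\P^n_k)$ is the compactly supported $E$-cohomology of $\P^n_k$ with supports on $p$, which in the six functor formalism is represented by $\pi_!\,i_!\,i^!\,\pi^*E$; since $\P^n_k$ is proper and $i$ is a closed immersion this coincides with $\pi_*\,i_*\,i^!\,\pi^*E$, which is what makes the ``compactly supported'' terminology appropriate. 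And $E\bigl(\tfrac{\P^n_k}{\P^n_k-p}\bigr)$ denotes the reduced $E$-cohomology of the pointed motivic space $\tfrac{\P^n_k}{\P^n_k-p}$, i.e.\ $\Map_{\SH(k)}\bigl(\Sigma^\infty\tfrac{\P^n_k}{\P^n_k-p},\,E\bigr)$.

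Next I would treat the right-hand side. Let $j\colon\P^n_k-p\hookto\P^n_k$ be the open complement. By construction $\Sigma^\infty\tfrac{\P^n_k}{\P^n_k-p}$ is the cofiber of $j_\sharp\mathbf{1}_{\P^n_k-p}\to\mathbf{1}_{\P^n_k}$, formed in $\SH(\P^n_k)$ and then transported to $\SH(k)$ along the left adjoint $\pi_\sharp$. \autoref{prop:cofiber}, applied to the smooth $k$-scheme $\P^n_k$ and the closed immersion $i$, identifies this cofiber with $i_*\mathbf{1}_{k(p)}$; hence $\Sigma^\infty\tfrac{\P^n_k}{\P^n_k-p}\simeq\pi_\sharp i_*\mathbf{1}_{k(p)}$ in $\SH(k)$. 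Applying $\Map_{\SH(k)}(-,E)$ and using the adjunctions $\pi_\sharp\dashv\pi^*$ and $i_!=i_*\dashv i^!$ then gives
\[
E\bigl(\tfrac{\P^n_k}{\P^n_k-p}\bigr)\;\simeq\;\Map_{\SH(\P^n_k)}\bigl(i_*\mathbf{1}_{k(p)},\,\pi^*E\bigr)\;\simeq\;\Map_{\SH(k(p))}\bigl(\mathbf{1}_{k(p)},\,i^!\pi^*E\bigr).
\]
For the left-hand side I would run the analogous, even shorter, reduction: with $q:=\pi\circ i\colon\Spec k(p)\to\Spec k$ one has $\pi_*i_*=q_*$, and the adjunction $q^*\dashv q_*$ together with $q^*\mathbf{1}_k=\mathbf{1}_{k(p)}$ gives $\Map_{\SH(k)}(\mathbf{1}_k,\,\pi_*i_*i^!\pi^*E)\simeq\Map_{\SH(k(p))}(\mathbf{1}_{k(p)},\,i^!\pi^*E)$. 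Comparing the two expressions proves the proposition. (Alternatively, one could bypass \autoref{prop:cofiber} and observe that both sides sit in — and are the fiber term of — the same localization long exact sequence relating $E$-cohomology of $\P^n_k$ and of $\P^n_k-p$, but the route above is cleaner given that \autoref{prop:cofiber} is already available.)

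I expect the writeup to be essentially this unwinding, so the one thing to be careful about is bookkeeping rather than any real difficulty. One should check that the two chains of adjunction isomorphisms used for the two sides are built from the \emph{same} localization triangle and the same units and counits, so that the resulting isomorphism $E_p(\P^n_k)\cong E\bigl(\tfrac{\P^n_k}{\P^n_k-p}\bigr)$ is the tautological one and not some twist of it; and one should confirm that the hypotheses of \autoref{prop:cofiber} are met — $\P^n_k$ smooth over $k$ and $i$ a closed immersion — with \emph{no} hypothesis on $\Spec k(p)$ (such as smoothness over $k$), which is exactly what allows the statement to hold for inseparable residue fields. Properness of $\P^n_k$ is used only to justify the ``compactly supported'' phrasing via $\pi_!\simeq\pi_*$; for a general smooth $k$-scheme the same computation identifies the $\pi_!i_!i^!\pi^*E$-cohomology with $E$ of the corresponding quotient space.
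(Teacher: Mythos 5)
Your proposal is correct and is essentially the same argument as the paper's: both hinge on \autoref{prop:cofiber} to identify $\Sigma^\infty\tfrac{\P^n_k}{\P^n_k-p}$ with $\pi_\sharp i_*\mathbf{1}_{k(p)}$, and both unwind the definition of $E_p(\P^n_k)$ through the adjunctions $i_!\simeq i_*\dashv i^!$ and $\pi_\sharp\dashv\pi^*\dashv\pi_*$, meeting at the mapping spectrum $\Map_{\SH(k(p))}(\mathbf{1}_{k(p)},i^!\pi^*E)$. The only cosmetic difference is that you run the two sides independently toward this common middle term (collapsing the left-hand side via $q=\pi\circ i$ and $q^*\dashv q_*$), whereas the paper strings the same isomorphisms into a single chain from $E_p(\P^n_k)$ to $E\bigl(\tfrac{\P^n_k}{\P^n_k-p}\bigr)$.
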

\begin{proof} 
Let $i:\Spec{k(p)}\to\P^n_k$ be the closed immersion of $p$ into $\P^n_k$. Let $\pi:\P^n_k\to\Spec{k}$ be the structure map, which is smooth. Cohomology with compact supports (c.f. \cite[4.2.1]{BW}) is defined to be
\begin{align*}
    E_p \left( \P^n_k \right) := \left[ \mathbf{1}_k , \pi_\ast i_! i^! \pi^\ast E \right]_{\SH(k)}.
\end{align*}
Since $i$ is a closed immersion, it is a proper map, so we have a canonical natural isomorphism $i_! \simeq i_\ast$. As $\pi$ is smooth, $\pi_\sharp$ exists and is left adjoint to $\pi^\ast$. Combining these facts with the basic properties of adjunctions, we have a string of natural isomorphisms:
\begin{align*}
    \left[ \pi^\ast \mathbf{1}_k, i_! i^! \pi^\ast E \right]_{\SH(\P^n_k)} &\cong \left[ \mathbf{1}_{\P^n_k}, i_\ast i^! \pi^\ast E \right]_{\SH(\P^n_k)} &&\text{($i_*\simeq i_!$)}\\
    &\cong \left[ i^\ast\mathbf{1}_{\P^n_k}, i^! \pi^\ast E \right]_{\SH(k(p))} &&\text{($i^\ast$ left adjoint to $i_\ast$)} \\
    &\cong \left[ \mathbf{1}_{k(p)}, i^! \pi^\ast E \right]_{\SH(k(p))} &&\text{($i^\ast$ monoidal)}\\
    &\cong \left[ i_\ast \mathbf{1}_{k(p)}, \pi^\ast E \right]_{\SH(\P^n_k)} &&\text{($i_\ast\simeq i_!$ left adjoint to $i^!$)} \\
    &\cong\left[ \pi_\sharp i_\ast \mathbf{1}_{k(p)}, E \right]_{\SH(k)} &&\text{($\pi_\sharp$ left adjoint to $\pi^\ast$)}.
\end{align*}
\autoref{prop:cofiber} states that $i_\ast \mathbf{1}_{k(p)}$ is the cofiber $\P^n_k / \left( \P^n_k - p \right)$, while $\pi_\sharp$ is the forgetful functor. The result follows from the definition of $E\left(\frac{\P^n_k}{\P^n_k-p}\right)$.
\end{proof}

\begin{proposition}[\textit{The collapse map induces the Gysin transfer}]\label{prop:collapse-map-induces-abs-transfer}
Let $E$ be any motivic spectrum over $k$. Let $i: \Spec k(p) \to \P^n_k$ be the inclusion of a closed point $p$, and let $q: \Spec k(p) \to \Spec k$ denote the structure map. The collapse map $c_p : \frac{\P^n_k}{\P^{n-1}_k} \to \frac{\P^n_k}{\P^n_k - p}$ induces a map $c_p^\ast: E_p(\P^n_k,\O^n_k) \to E_{\A^n_k}(\P^n_k,\O^n_k)$, and the composite
\begin{align*}
    E\left(\Spec k(p), \mathcal{L}_{q} \right) \xto{i_!} E_p(\P^n_k, \O^n_k) \xto{c_p^\ast} E_{\A^n_k}(\P^n_k,\O^n_k) \simeq E(\Spec k)
\end{align*}
is equal to the Gysin transfer $q_!$ \cite[(2.2.4)]{deloop2}.
\end{proposition}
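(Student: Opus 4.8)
The plan is to unwind both the Gysin transfer $q_!$ and the composite $c_p^\ast\circ i_!$ in the six-functor formalism and match them term by term, reusing the chain of adjunction isomorphisms from the proof of \autoref{prop:cohomology-supported-on-a-point}. First I would set up the relevant factorization: write $q=\pi\circ i$, where $i\colon\Spec k(p)\hookrightarrow\P^n_k$ is the closed immersion of $p$ and $\pi\colon\P^n_k\to\Spec k$ is the structure map. Since $\P^n_k$ is regular and $p$ is a closed point lying in the standard affine chart $\A^n_k\subseteq\P^n_k$, the map $i$ is a regular closed immersion of codimension $n$ with $i^\ast\mathcal{L}_\pi\simeq\O^n_{k(p)}$, the map $\pi$ is smooth and proper, and $q$ is finite and lci with $\mathcal{L}_q=i^\ast\mathcal{L}_\pi+\mathcal{L}_i$ in $K(\Spec k(p))$. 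I would then recall that the Gysin transfer \cite[(2.2.4)]{deloop2} sends a class $\alpha\colon\mathbf{1}_{k(p)}\to\Sigma^{\mathcal{L}_q}q^\ast E$ to the composite $\mathbf{1}_k\xto{\eta}q_!\mathbf{1}_{k(p)}\to E$, where the right-hand map is adjoint to $\mathfrak{p}_q\circ\alpha\colon\mathbf{1}_{k(p)}\to q^!E$ under $q_!\dashv q^!$, and $\eta$ is the fundamental class of $q$ --- which, as $q$ is finite so that $q_!\simeq q_\ast$, is the unit $\mathbf{1}_k\to q_\ast q^\ast\mathbf{1}_k\simeq q_!\mathbf{1}_{k(p)}$.

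Next I would re-express the intermediate cohomology groups. Running the argument in the proof of \autoref{prop:cohomology-supported-on-a-point} in the twisted setting, together with the proper base change $i_\ast\simeq i_!$, the projection formulas, the smooth-morphism purity isomorphism $\pi_\sharp\simeq\pi_!\Sigma^{\mathcal{L}_\pi}$, and \autoref{prop:cofiber} (which gives $\Sigma^\infty_k(\P^n_k/(\P^n_k-p))\simeq\pi_\sharp i_\ast\mathbf{1}_{k(p)}$), produces a chain of isomorphisms
\[
E_p(\P^n_k,\O^n_k)\;\simeq\;\bigl[\pi_\sharp i_\ast\Sigma^{-\O^n_{k(p)}}\mathbf{1}_{k(p)},\,E\bigr]_{\SH(k)}\;\simeq\;\bigl[q_!\mathbf{1}_{k(p)},\,E\bigr]_{\SH(k)}\;\simeq\;\bigl[\mathbf{1}_{k(p)},\,q^!E\bigr]_{\SH(k(p))},
\]
where the middle step uses $q_!\simeq\pi_!i_!$ and $i^\ast\mathcal{L}_\pi\simeq\O^n_{k(p)}$ to absorb the Thom twist. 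Similarly $E_{\A^n_k}(\P^n_k,\O^n_k)\simeq E(\Spec k)$, since $\P^n_k/\P^{n-1}_k\simeq\Th(\O^n_k)$ and the $\O^n_k$-twist untwists this Thom space. I would then check, using the coherence $\mathfrak{p}_q\simeq\mathfrak{p}_\pi\circ\mathfrak{p}_i$ of purity transformations under composition (and that $\mathfrak{p}_\pi$ is invertible, $\pi$ being smooth), that under these identifications the Gysin map $i_!\colon E(\Spec k(p),\mathcal{L}_q)\to E_p(\P^n_k,\O^n_k)$ is exactly the map $\alpha\mapsto\mathfrak{p}_q\circ\alpha$, i.e.\ corresponds to the adjoint of $\mathfrak{p}_q\circ\alpha$ in $[q_!\mathbf{1}_{k(p)},E]$.

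The final ingredient is the collapse map. The map $c_p\colon\P^n_k/\P^{n-1}_k\to\P^n_k/(\P^n_k-p)$ is the canonical comparison of quotient spaces arising from the inclusion $\P^{n-1}_k\subseteq\P^n_k-p$ (which holds because $p\in\A^n_k$). Applying $\Sigma^{-\O^n_k}$ and the identifications above, $c_p^\ast$ followed by $E_{\A^n_k}(\P^n_k,\O^n_k)\simeq E(\Spec k)$ becomes precomposition with a map $\mathbf{1}_k\to q_!\mathbf{1}_{k(p)}$, and I would argue that this map is precisely the fundamental class $\eta$. Granting this, the composite $c_p^\ast\circ i_!$ followed by $E_{\A^n_k}(\P^n_k,\O^n_k)\simeq E(\Spec k)$ sends $\alpha$ to $\bigl(\text{adjoint of }\mathfrak{p}_q\circ\alpha\bigr)\circ\eta$, which is exactly $q_!(\alpha)$, completing the proof.

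The hard part will be this last identification: showing that the Morel--Voevodsky collapse $c_p$ corresponds, after untwisting by $\O^n_k$, to the categorical fundamental class $\eta\colon\mathbf{1}_k\to q_!\mathbf{1}_{k(p)}$. Concretely this is a compatibility between the localization cofiber sequence underlying \autoref{prop:cofiber} and the exchange isomorphisms relating $\pi_\sharp$, $\pi_!$, $i_\ast$, and $i_!$; the delicate point is to confirm that no spurious automorphism of $\mathbf{1}_k$ (a sign $\langle-1\rangle$, or an additional Thom twist) sneaks in, which is where the coherences built into the six-functor formalism do the work. A secondary, more routine point is verifying that the reidentification of $E_p(\P^n_k,\O^n_k)$ recovers $i_!$ itself rather than a twisted variant, which follows from the coherence $\mathfrak{p}_q\simeq\mathfrak{p}_\pi\circ\mathfrak{p}_i$ together with the invertibility of $\mathfrak{p}_\pi$.
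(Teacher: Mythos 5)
The paper's proof of this proposition is a one-line citation: it invokes the commutativity of the bottom rectangle of \cite[(3.2.12)]{deloop2}, a diagram established in that reference which records precisely the compatibility you are trying to prove. Your proposal instead reconstructs the compatibility from scratch within the six-functor formalism, and the outline is structurally sound: factoring $q = \pi\circ i$, unwinding the Gysin transfer of \cite[(2.2.4)]{deloop2} as $\alpha\mapsto(\text{adjoint of }\mathfrak{p}_q\circ\alpha)\circ\eta$ with $\eta$ the unit of $q^\ast\dashv q_\ast$ (valid since $q$ is proper, so $q_!\simeq q_\ast$), re-expressing $E_p(\P^n_k,\O^n_k)\simeq[\mathbf{1}_{k(p)},q^!E]$ via the adjunction chain of \autoref{prop:cohomology-supported-on-a-point}, and invoking $\mathfrak{p}_q\simeq\mathfrak{p}_\pi\circ\mathfrak{p}_i$ together with the invertibility of $\mathfrak{p}_\pi$ to identify $i_!$ with postcomposition by $\mathfrak{p}_q$.

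However, you explicitly flag --- and do not close --- the central step: that the Thom collapse $c_p$, after untwisting by $\O^n_k$ and running the chain of identifications, implements precomposition with the fundamental class $\eta\colon\mathbf{1}_k\to q_!\mathbf{1}_{k(p)}$. This is not a routine bookkeeping check. It requires simultaneously tracking the localization cofiber sequence behind \autoref{prop:cofiber}, the smooth-purity identification $\pi_\sharp\simeq\pi_!\Sigma^{\mathcal{L}_\pi}$, the exchange isomorphisms relating $\pi_\sharp$, $\pi_!$, $i_\ast$, and $i_!$, and ruling out any stray automorphism of the unit, and this is precisely the content the paper offloads to the cited rectangle. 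As written, your proposal is a faithful roadmap for \emph{re-deriving} \cite[(3.2.12)]{deloop2} rather than a complete proof; to close the gap you would have to unpack the construction of the fundamental class in \cite[\S3]{deloop2} and verify the Thom-collapse compatibility directly, which is exactly what the authors chose not to repeat.
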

\begin{proof} This can be seen by the commutativity of the bottom rectangle of \cite[(3.2.12)]{deloop2}.
\end{proof}

Given a map $f:\A^n_k\to\A^n_k$ with an isolated zero $p$, the class $\bar{f}$ lives in the stable homotopy classes of maps from the cofiber $\P^n_k/\left( \P^n_k - p \right)$ into $\P^n_k / \P^{n-1}_k$. This group admits a nice algebraic description.

\begin{proposition}\label{prop:twisted-GW} There is an isomorphism of groups
\begin{align*}
    \left[ \frac{\P^n_k}{\P^n_k - p}, \frac{\P^n_k}{\P^n_k - 0} \right]_{\SH(k)} \cong \GW \left( k(p), \omega_q \right).
\end{align*}
\end{proposition}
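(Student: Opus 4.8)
The plan is to identify the target $\bigl[\tfrac{\P^n_k}{\P^n_k - p}, \tfrac{\P^n_k}{\P^n_k - 0}\bigr]_{\SH(k)}$ with a compactly supported cohomology group of $\P^n_k$ along $p$, with coefficients in the sphere spectrum, and then apply the algebraic computation of such groups in terms of twisted Grothendieck--Witt rings. First I would observe that $\tfrac{\P^n_k}{\P^n_k - 0}\simeq \tfrac{\P^n_k}{\P^{n-1}_k}\simeq \P^1_k{}^{\wedge n}$, which is the $n$-fold smash of the Tate motivic sphere; after a Thom isomorphism this is $\Th(\O^n_k)$ over the base point, i.e.\ a $\P^1$-suspension $\Sigma^{2n,n}\mathbf{1}_k$. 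So the Hom-group in question is, by adjunction and \autoref{prop:cohomology-supported-on-a-point} applied to $E = \Sigma^{-2n,-n}\mathbf{1}_k$ (equivalently, to the appropriately shifted sphere), identified with the compactly supported cohomology $E_p(\P^n_k, \O^n_k)$ in the notation of \autoref{prop:collapse-map-induces-abs-transfer}.

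Next I would run the adjunction string from the proof of \autoref{prop:cohomology-supported-on-a-point}: with $i\colon \Spec k(p)\hookto \P^n_k$ the closed point and $q\colon \Spec k(p)\to\Spec k$ the structure map, one gets
\[
    \left[ \frac{\P^n_k}{\P^n_k - p}, \frac{\P^n_k}{\P^n_k-0}\right]_{\SH(k)}
    \cong \left[\mathbf{1}_{k(p)}, i^!\pi^!\mathbf{1}_k\right]_{\SH(k(p))},
\]
where $\pi\colon \P^n_k\to\Spec k$ is the (smooth, hence the relevant shift is by the trivial bundle $\O^n_k$) structure map, so that $\pi^!\mathbf{1}_k \simeq \Sigma^{\O^n_k}\pi^\ast\mathbf{1}_k$; and $i^!\pi^! = (\pi i)^! = q^!$, so the group is $[\mathbf{1}_{k(p)}, q^!\mathbf{1}_k]_{\SH(k(p))}$. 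By purity for $q$ (available here since $\Spec k(p)$ and $\Spec k$ are regular, though at this point we no longer need the perfectness hypothesis — the purity transformation $\mathfrak p_q$ is always defined and this computation only uses the identification of $q^!$ with a Thom twist of $q^\ast$, which by \autoref{prop:f-pure} and the discussion after \autoref{eqn:purity-for-q} holds for the sphere), $q^!\mathbf{1}_k \simeq \Sigma^{\mathcal L_q} q^\ast\mathbf{1}_k = \Sigma^{\mathcal L_q}\mathbf{1}_{k(p)}$, and $\mathcal L_q = \Omega_{k(p)/k}[0]$ in $K(\Spec k(p))$ is (the class of) a rank-zero virtual bundle whose determinant is $\omega_q := \omega_{k(p)/k}$. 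Hence the group is $[\mathbf{1}_{k(p)}, \Sigma^{\mathcal L_q}\mathbf{1}_{k(p)}]_{\SH(k(p))}$, which is the $\mathcal L_q$-twisted zeroth stable stem of $k(p)$.

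Finally I would invoke Morel's computation of the $0$-line of the motivic sphere: $[\mathbf{1}_F, \mathbf{1}_F]_{\SH(F)} \cong \GW(F)$, and its twisted refinement $[\mathbf{1}_F, \Sigma^{\xi}\mathbf{1}_F]_{\SH(F)}\cong \GW(F,\det\xi)$ for a virtual bundle $\xi$ of rank $0$ — this is exactly the twisted Grothendieck--Witt group appearing in the definition of the absolute transfer (\autoref{prop:twisted-GW}'s target $\GW(k(p),\omega_q)$). Applying this with $F = k(p)$ and $\xi = \mathcal L_q$, whose determinant line is $\omega_q$, yields the claimed isomorphism $[\tfrac{\P^n_k}{\P^n_k-p}, \tfrac{\P^n_k}{\P^n_k-0}]_{\SH(k)}\cong \GW(k(p),\omega_q)$. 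The main obstacle is bookkeeping the Thom/suspension twists correctly: one must make sure the $\O^n_k$-twists coming from the smooth map $\pi$ on both the source ($\Th T_p\P^n_k$ via the collapse) and the target ($\P^n_k/\P^{n-1}_k$) cancel, leaving precisely the cotangent twist $\mathcal L_q$ of $q$ and nothing else, so that the residual twist line is $\omega_q$ rather than $\omega_q$ tensored with some power of the canonical bundle of $\P^n$. Everything else is a formal chase through adjunctions plus citing Morel and \cite{DJK,deloop2}.
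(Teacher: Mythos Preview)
Your overall strategy matches the paper's: identify the target with $\Sigma^n\mathbf{1}_k$, pass through adjunctions to reach $[\mathbf{1}_{k(p)}, q^!\mathbf{1}_k]_{\SH(k(p))}$, apply purity for $q$, and then identify the resulting twisted $\pi_0$ with $\GW(k(p),\omega_q)$. The endpoint is correct, and your closing remark that the main obstacle is making the Thom twists cancel is exactly right.

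The step that fails as written is the assertion $\pi^!\mathbf{1}_k \simeq \Sigma^{\O^n_k}\pi^\ast\mathbf{1}_k$. Purity for the smooth structure map $\pi\colon\P^n_k\to\Spec k$ gives $\pi^! \simeq \Sigma^{\mathcal{L}_\pi}\pi^\ast$ with $\mathcal{L}_\pi = \Omega_{\P^n_k/k}$, and this bundle is not trivial on $\P^n_k$; so you cannot equate $\pi^!\mathbf{1}_k$ with $\Sigma^n\pi^\ast\mathbf{1}_k$ in $\SH(\P^n_k)$, and consequently the identification $i^!\pi^! = q^!$ does not immediately hook up to the $\Sigma^n$ coming from the target. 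What \emph{is} true is that $i^\ast\Omega_{\P^n_k/k}$, being a rank-$n$ bundle over a point, is free of rank $n$, so the cancellation you want does occur --- but only after restricting along $i$, via the cotangent triangle $\mathcal{L}_i + i^\ast\mathcal{L}_\pi = \mathcal{L}_q$ in $K(k(p))$. The paper avoids this bookkeeping by factoring $\pi\circ i = q\circ\widetilde{\pi}\circ\iota$ through the base-change square to $\P^n_{k(p)}$: there the point $\widetilde{p}$ is rational, so $\P^n_{k(p)}/(\P^n_{k(p)}-\widetilde{p})\simeq\Sigma^n\mathbf{1}_{k(p)}$ by ordinary purity at a rational point, and the two copies of $\Sigma^n$ cancel on the nose. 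For the final identification the paper routes through the unit map $\mathbf{1}_{k(p)}\to H\widetilde{\mathbb{Z}}$ and twisted Chow--Witt groups rather than invoking a twisted Morel isomorphism directly; this difference is cosmetic.
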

\begin{proof}
Excision implies that
\begin{align*}
    \frac{\P^n_k}{\P^n_k - 0}\simeq \frac{\A^n_k}{\A^n_k - 0} \simeq \Th(\O_k^n),
\end{align*}
where $\O_k^n$ is the trivial rank $n$ bundle over a point. As an element of $\SH(k)$, we can write $\Sigma^\infty \Th(\O_k^n)$ as $\Sigma^n \mathbf{1}_k$. Let $\til{\pi}:\P^n_{k(p)}\to\Spec{k(p)}$, $\pi:\P^n_k\to\Spec{k}$, and $q:\Spec{k(p)}\to\Spec{k}$ be structure maps. Let $i:\Spec{k(p)}\to\P^n_k$ denote the inclusion of $p$, and let $\iota: \Spec k(p) \to \P^n_{k(p)}$ denote inclusion of the canonical $k(p)$-rational point $\til{p}$ lying over $p$. These maps fit into the commutative diagram
\begin{equation}\label{diag:commutative diagram}
\begin{tikzcd}
    \Spec k(p)\rar["\iota" above]\dar[equal] & \P^n_{k(p)}\dar\rar["\widetilde{\pi}" above]\pb & \Spec k(p)\dar["q" right] \\
    \Spec k(p)\rar["i" below] & \P^n_k\rar["\pi" below] & \Spec k.
\end{tikzcd}
\end{equation}
By \autoref{prop:cofiber} and purity, we can rewrite our mapping classes as
\begin{align*}
    \left[ \frac{\P^n_k}{\P^n_k - p}, \frac{\P^n_k}{\P^n_k - 0} \right]_{\SH(k)} &\cong \left[ \pi_\sharp i_\ast \mathbf{1}_{k(p)}, \Sigma^n \mathbf{1}_k \right]_{\SH(k)}.
\end{align*}
Functoriality implies $(q\til{\pi}\iota)_*=q_*\til{\pi}_*\iota_*$ and $(\pi i)_*=\pi_* i_*$. As $\pi$ and $\til{\pi}$ are both proper and \'etale, we have $\pi_*\simeq\pi_\sharp$ and $\til{\pi}_*\simeq\til{\pi}_\sharp$. Since $q$ is proper, we have $q_*\simeq q_!$, with right adjoint $q^!$. \autoref{diag:commutative diagram} thus allows us to rewrite
\begin{align*}
    \left[ \pi_\sharp i_\ast \mathbf{1}_{k(p)}, \Sigma^n \mathbf{1}_k \right]_{\SH(k)} & \cong \left[\pi_*i_*\mathbf{1}_{k(p)},\Sigma^n\mathbf{1}_k\right]_{\SH(k)} &&\text{($\pi_*\simeq\pi_\sharp$)}\\
    &\cong \left[q_*\til{\pi}_*\iota_*\mathbf{1}_{k(p)},\Sigma^n\mathbf{1}_k\right]_{\SH(k)} &&\text{($q\til{\pi}\iota=\pi i$)}\\
    &\cong\left[\til{\pi}_\sharp\iota_*\mathbf{1}_{k(p)},q^!\Sigma^n\mathbf{1}_k\right]_{\SH(k(p))} &&\text{($q^!$ right adjoint to $q_*$, $\til{\pi}_*\simeq\til{\pi}_\sharp$)}\\
    &\cong \left[ \Sigma^\infty \frac{\P^n_{k(p)}}{\P^n_{k(p)}- \til{p}}, q^! \Sigma^n \mathbf{1}_k \right]_{\SH(k(p))} &&\text{(\autoref{prop:cofiber})}\\
    &\cong \left[ \Sigma^n \mathbf{1}_{k(p)}, q^! \Sigma^n \mathbf{1}_k \right]_{\SH(k(p))}&&\text{(purity)}.
\end{align*}
We can now use the isomorphism $q^! \Sigma^n \cong \Sigma^n q^!$, desuspend, and remark that the sphere spectrum is $q$-pure (\autoref{eqn:purity-for-q}) to deduce
\[
    \left[ \Sigma^n \mathbf{1}_{k(p)}, \Sigma^n q^! \mathbf{1}_k \right] \cong \left[ \mathbf{1}_{k(p)}, q^! \mathbf{1}_k \right] \cong \left[ \mathbf{1}_{k(p)}, \Sigma^{\mathcal{L}_q} \mathbf{1}_{k(p)} \right].
\]
Since the unit map $\mathbf{1}_{k(p)} \to H\til{ \mathbb{Z}}$ induces an isomorphism on $\pi_0$, we have an isomorphism
\begin{align*}
    \left[ \mathbf{1}_{k(p)}, \Sigma^{\mathcal{L}_q} \mathbf{1}_{k(p)} \right] \cong \left[ \mathbf{1}_{k(p)}, \Sigma^{\mathcal{L}_q} H\til{ \mathbb{Z}} \right] = \til{\text{CH}}{}^0 \left( \Spec k(p), \omega_q \right).
\end{align*}
Here $\widetilde{\text{CH}}$ denotes the Chow--Witt groups of a scheme, which are represented by the motivic spectrum $H\til{ \mathbb{Z}}$, and $\omega_q=\det\mathcal{L}_q$. We conclude by noting that $\widetilde{\text{CH}}{}^0 \left( \Spec k(p), \omega_q \right) \cong \GW \left( \Spec k(p), \omega_q \right)$ (see e.g. \cite[p.~35]{deloop2}).
\end{proof}

\begin{corollary}[\textit{Precomposition with the collapse map is the absolute transfer}]\label{cor:collapse-map-abs-transfer}
The collapse map $c_p : \P^n_k/\P^{n-1}_k \to \P^n_k / \left( \P^n_k - p \right)$ induces a morphism
\begin{equation}\label{eqn:absolute transfer}
    \left[ \frac{\P^n_k}{\P^n_k - p}, \frac{\P^n_k}{\P^{n-1}_k} \right]_{\SH(k)} \to \left[ \frac{\P^n_k}{\P^{n-1}_k}, \frac{\P^n_k}{\P^{n-1}_k} \right]_{\SH(k)},
\end{equation}
which is a map of the form $\GW(k(p),\omega_q) \to \GW(k)$. This is the absolute transfer.
\end{corollary}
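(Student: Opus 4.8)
The plan is to assemble this corollary directly from the two preceding propositions together with the identification of the relevant mapping class group. First I would recall from \autoref{prop:twisted-GW} that
\[
    \left[ \frac{\P^n_k}{\P^n_k - p}, \frac{\P^n_k}{\P^{n-1}_k} \right]_{\SH(k)} \cong \GW(k(p),\omega_q),
\]
and that the target $\left[ \P^n_k/\P^{n-1}_k, \P^n_k/\P^{n-1}_k \right]_{\SH(k)} \cong \GW(k)$ by Morel's degree map (so the displayed map \eqref{eqn:absolute transfer} is indeed of the form $\GW(k(p),\omega_q) \to \GW(k)$, as claimed). The content of the corollary is then that, under these identifications, precomposition with $c_p$ \emph{is} the absolute transfer $\Tr_k^{k(p)}(\omega_{k(p)/k})$ of \autoref{sec:transfers}.

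The key step is to apply \autoref{prop:collapse-map-induces-abs-transfer} with the motivic spectrum $E = \mathbf{1}_k$ (or, since we only care about the $\pi_0$ level, $E = H\til{\mathbb{Z}}$, whose sections compute $\GW$ in the relevant twist). That proposition says that the composite
\[
    E(\Spec k(p), \mathcal{L}_q) \xto{i_!} E_p(\P^n_k,\O^n_k) \xto{c_p^\ast} E_{\A^n_k}(\P^n_k,\O^n_k) \simeq E(\Spec k)
\]
equals the Gysin transfer $q_!$. I would then invoke the (previously recorded) fact that for the finite extension $q:\Spec k(p)\to\Spec k$, the Gysin transfer $q_!$ on $\GW$-valued cohomology \emph{is} Morel's absolute transfer $\Tr_k^{k(p)}(\omega_{k(p)/k}) : \GW(k(p),\omega_q)\to\GW(k)$ — this is exactly the alignment of the absolute transfer with Gysin maps noted at the start of \autoref{sec:transfers} (valid under the standing hypothesis that $k$ is finitely generated over a perfect field), together with the identification $\omega_q = \omega_{k(p)/k} = \det\Omega_{k(p)/k}$ coming from $\mathcal{L}_q \simeq \Omega_{k(p)/k}[0]$ for the étale-away-from-inseparability map $q$ (more precisely, $\mathcal{L}_q$ is $\Omega_{k(p)/k}$ placed in degree $0$, so its determinant line is $\omega_{k(p)/k}$).

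The remaining bookkeeping is to check that the source of $c_p^\ast$ in \autoref{prop:collapse-map-induces-abs-transfer}, namely $E_p(\P^n_k,\O^n_k)$, is naturally the group $\left[\P^n_k/(\P^n_k-p),\ \P^n_k/\P^{n-1}_k\right]_{\SH(k)}$ — this is \autoref{prop:cohomology-supported-on-a-point} combined with the excision identification $\P^n_k/\P^{n-1}_k \simeq \Th(\O_k^n)$ used in the proof of \autoref{prop:twisted-GW} — and that $i_!$ is an isomorphism onto this group identifying it with $\GW(k(p),\omega_q)$, which is again \autoref{prop:twisted-GW} read through the same chain of adjunctions. Chasing these identifications, precomposition by $c_p$ becomes precisely the map $c_p^\ast \circ i_!$, which we have just identified with $q_! = \Tr_k^{k(p)}(\omega_{k(p)/k})$.

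I expect the main obstacle to be purely expository rather than mathematical: making the identifications in \autoref{prop:twisted-GW} and \autoref{prop:collapse-map-induces-abs-transfer} literally the same — i.e.\ checking that the chain of adjunction isomorphisms used to prove \autoref{prop:twisted-GW} is compatible with the $i_!$ appearing in \autoref{prop:collapse-map-induces-abs-transfer}, and that the twist $\omega_q$ of \autoref{prop:twisted-GW} matches the orientation data $\mathcal{L}_q$ carried by the Gysin transfer. Both amount to unwinding definitions already set up in \cite{deloop2}, so the proof itself can be kept to a sentence or two pointing at \autoref{prop:collapse-map-induces-abs-transfer}, \autoref{prop:twisted-GW}, and the cited compatibility (e.g.\ the commutativity of \cite[(3.2.12)]{deloop2}).
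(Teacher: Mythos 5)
Your proposal follows essentially the same route as the paper: identify the source and target of the precomposition map via \autoref{prop:twisted-GW} and Morel's degree, apply \autoref{prop:collapse-map-induces-abs-transfer} with $E=\mathbf{1}_k$ to realize it as the Gysin transfer $q_!$, and then invoke the coincidence of the Gysin map with Morel's absolute transfer. The only gap is that for that last step you gesture at ``the alignment noted at the start of \autoref{sec:transfers}'' rather than citing a concrete result; the paper points to \cite[Proposition 4.3.17]{deloop2} for this identification, and you should do the same to make the final sentence precise.
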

\begin{proof} 
By \autoref{prop:twisted-GW} and \cite[Corollary 1.24]{Morel}, \autoref{eqn:absolute transfer} can be written as a map $\GW(k(p),\omega_q)\to\GW(k)$. Taking $E=\mathbf{1}_k$ to be the sphere spectrum, \autoref{prop:collapse-map-induces-abs-transfer} implies that the collapse map induces a Gysin map $\GW(k(p),\omega_q) \to \GW(k)$. By \cite[Proposition 4.3.17]{deloop2}, the Gysin map coincides with the absolute transfer.
\end{proof}

\subsection{Hinting at lifts for transfers}
So far, we have discussed transfers in the context of both Grothendieck--Witt rings and motivic spectra. The following result suggests that one can lift the class $\bar{f}$ up to a class $\til{f}$ around the canonical $k(p)$-rational point $\til{p}$. We then ask if the lift $\til{f}$ is compatible with a given transfer $\tau$: is $\tau(\til{f})=\bar{f}$?

\begin{proposition}\label{prop:untwisting} 
Morel's canonical untwisting (in odd characteristic) can be thought of as a map of the form
\begin{align*}
    \left[ \frac{\P^n_k}{\P^n_k - p}, \frac{\P^n_k}{\P^n_k - 0} \right]_{\SH(k)} \cong \GW(k(p),\omega_q) \xto{\sim} \GW(k(p)) \cong \left[ \frac{\P^n_{k(p)}}{\P^n_{k(p)} - \widetilde{p}},\frac{\P^n_{k(p)}}{\P^n_{k(p)} - 0} \right]_{\SH(k(p))}.
\end{align*}
\end{proposition}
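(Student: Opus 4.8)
\emph{Proof proposal.} The plan is to read the two outer isomorphisms off of \autoref{prop:twisted-GW} and to identify the middle arrow with Morel's untwisting of the absolute transfer; the statement is then essentially a bookkeeping of isomorphisms, with the only substantive input being the canonicity of the untwisting in odd characteristic.

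First I would observe that the left-hand isomorphism $\left[ \P^n_k/(\P^n_k - p), \P^n_k/(\P^n_k - 0) \right]_{\SH(k)} \cong \GW(k(p),\omega_q)$ is exactly \autoref{prop:twisted-GW} (with $\omega_q=\det\mathcal{L}_q$). The right-hand isomorphism is the same proposition applied over the base field $k(p)$ with the closed point taken to be $\til{p}$: since $\til{p}$ is $k(p)$-rational, its residue field is $k(p)$ and the relevant structure map is the identity $\Spec k(p) \to \Spec k(p)$, whose cotangent complex vanishes. Hence the twisting line appearing in \autoref{prop:twisted-GW} is canonically trivial in this case, and the proposition yields $\left[ \P^n_{k(p)}/(\P^n_{k(p)} - \til{p}), \P^n_{k(p)}/(\P^n_{k(p)} - 0) \right]_{\SH(k(p))} \cong \GW(k(p))$ with no twist. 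Compatibility of these two applications of \autoref{prop:twisted-GW} with base change along $q$ is formal, following from naturality of purity and of the localization cofiber sequence underlying \autoref{prop:cofiber}.

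Next I would treat the middle map $\GW(k(p),\omega_q) \xrightarrow{\sim} \GW(k(p))$. A choice of primitive element $t$ for $k(p)/k$ (equivalently, of its minimal polynomial) trivializes the graded line $\omega_q = \det \mathcal{L}_q$; this is precisely the datum that, in the simple setting, underlies the identification of the absolute transfer $\GW(k(p),\omega_q)\to\GW(k)$ of \autoref{cor:collapse-map-abs-transfer} with the geometric transfer $\tau_k^{k(p)}(t)$. The content of the proposition is the assertion, due to Morel \cite[Theorem 4.27, Remark 5.6]{Morel}, that in odd characteristic any two primitive elements yield trivializations of $\omega_q$ differing by a square class in $k(p)^\times$, so that the resulting isomorphism $\GW(k(p),\omega_q)\xrightarrow{\sim}\GW(k(p))$ is independent of the choice of $t$. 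This is the ``canonical untwisting,'' and composing the three isomorphisms gives the statement.

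The main obstacle — in fact the only non-formal point — is this last step: showing that the transition unit between two trivializations of $\omega_q$ is a square, which is exactly where the hypothesis of odd characteristic is used. I would simply invoke this from \cite{Morel} rather than reprove it. In characteristic $2$ the ambiguity genuinely survives, and one must instead work with the twisted group $\GW(k(p),\omega_q)$, or equivalently with the absolute transfer of \autoref{cor:collapse-map-abs-transfer}, in place of the untwisted $\GW(k(p))$.
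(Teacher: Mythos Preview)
Your proposal is correct and follows essentially the same approach as the paper. The only minor difference is that the paper obtains the right-hand isomorphism more directly: since both $\til{p}$ and $0$ are $k(p)$-rational, purity identifies each quotient with a motivic sphere, and then \cite[Corollary~1.24]{Morel} gives $\GW(k(p))$; your route via specializing \autoref{prop:twisted-GW} to a rational point (so that $\mathcal{L}_q=0$ and the twist trivializes) yields the same conclusion with slightly heavier machinery.
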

\begin{proof} 
Since both $\til{p}$ and $0$ are $k(p)$-rational, the equivalence $[ \frac{\P^n_{k(p)}}{\P^n_{k(p)}- \til{p}}, \frac{\P^n_{k(p)}}{\P^n_{k(p)} - 0}]_{\SH(k(p))} \cong \GW(k(p))$ follows immediately by purity and \cite[Corollary 1.24]{Morel}. The result now follows from \autoref{prop:twisted-GW}. 
\end{proof}

\begin{remark}
Suppose that $f$ is an endomorphism of $\A^n_k$ with an isolated root at a closed point $p$. This induces a class $\bar{f}\in\left[ \frac{\P^n_k}{\P^n_k - p}, \frac{\P^n_k}{\P^n_k - 0} \right]$ whose absolute transfer is $\deg^{\A^1}_p(f)$. However, \autoref{prop:untwisting} implies that we can untwist $\bar{f}$ to obtain a class $\til{f}\in\left[ \frac{\P^n_{k(p)}}{\P^n_{k(p)} - \til{p}}, \frac{\P^n_{k(p)}}{\P^n_{k(p)} - 0} \right]$ whose geometric transfer recovers $\deg_p^{\A^1}(f)$. This leads us to the question of lifts, transfers, and degrees: is there an endomorphism $g$ of $\A^n_{k(p)}$ such that $\bar{g}=\til{f}$? \autoref{lem:geometric-lift} answers this question in the affirmative in the univariate setting.
\end{remark}

\section{B\'ezoutians, Hankel forms, and Horner bases}\label{sec:algebra}
We now discuss a few algebraic tools used for computing local $\A^1$-degrees. The first tool will be the \textit{B\'ezoutian} $\Bez(f)$ of a map $f:\A^n_k\to\A^n_k$, which is a polynomial in $2n$ variables. The coefficients of $\Bez(f)$ determine a bilinear form $k[X_1,\ldots,X_n]/(f)\times k[Y_1,\ldots,Y_n]/(f)\to k$ whose isomorphism class is $\deg^{\A^1}(f)$ \cite{BMP21}. This was first noticed by Cazanave in the univariate case \cite{Cazanave}. One can also recover the local $\A^1$-degree $\deg_p^{\A^1}(f)$ from $\Bez(f)$ in a similar manner \cite{BMP21}.

The second tool will be \textit{Hankel} matrices. In the univariate case, the bilinear forms determined by B\'ezoutians have a particular structure (namely, they are represented by Hankel matrices). By exploiting this structure, one can easily diagonalize these bilinear forms to better understand their classes in $\GW(k)$.

The final tool will be \textit{Horner bases}, which serve as an alternative to the monomial basis of a quotient $k[x]/(f)$. We will also discuss how Horner bases interact with the Scharlau form when $k[x]/(f)$ is a field. This will be relevant in the proof of \autoref{lem:geometric-lift}.

\subsection{B\'ezoutians and $\A^1$-degrees}
Given a map $f/g:\P^1_k\to\P^1_k$, let
\begin{align*}
    \Bez(f/g):=\frac{f(X)g(Y) - f(Y)g(X)}{X-Y}\in k[X,Y]
\end{align*}
be its B\'ezoutian. Writing $\Bez(f/g)=\sum_{i,j}c_{ij}X^{i-1}Y^{j-1}$, the matrix of coefficients $\left( c_{ij} \right)$ defines the \textit{B\'ezoutian bilinear form} of $f/g$, and the class in $\GW(k)$ of this bilinear form recovers $\deg^{\A^1}(f/g)$ \cite{Cazanave}.

In the univariate case, every local $\A^1$-degree can be expressed as a global $\A^1$-degree of the projective line.

\begin{proposition}[\textit{Univariate local degrees are global degrees}]\label{prop:local-degrees-are-global-degrees} Let $f: \A^1_k \to \A^1_k$ be a map with an isolated zero at a closed point $p$, and let $m(x)\in k[x]$ be an irreducible polynomial that generates the maximal ideal corresponding to $p$. Then $f(x) = u(x)m(x)^d$ for some $u(x)\in k[x]$ that is nonvanishing at $p$, and
\begin{align*}
    \deg_p^{\A^1}(f) = \deg^{\A^1} \left( \P^1_k \xto{m^d/u} \P^1_k \right).
\end{align*}
\end{proposition}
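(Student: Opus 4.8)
The plan is to compare the two sides via their B\'ezoutian descriptions from \autoref{sec:algebra}. First, the factorization: since $k[x]$ is a UFD and $m$ is irreducible with $f(p)=0$, we have $m\mid f$; taking $d\ge 1$ maximal with $m^d\mid f$ and setting $u=f/m^d$ yields $f=um^d$ with $m\nmid u$, i.e.\ $u(p)\ne 0$. (In one variable, isolatedness of $p$ in $f^{-1}(0)$ is automatic and merely records that $f\ne0$.) We read $m^d/u$ as a pointed rational function of degree $d\deg m$, so that $u$ is taken modulo $m^d$ and hence has degree $<d\deg m$; this normalization does not affect $\deg_p^{\A^1}(f)$, for the reason explained in the last step.

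By \cite{BMP21}, $\deg_p^{\A^1}(f)$ is the class in $\GW(k)$ of the bilinear form on the local algebra $k[x]_{(m)}/(f)$ induced by $\Bez(f)(X,Y)=\frac{f(X)-f(Y)}{X-Y}$. Since $u$ is a unit in $k[x]_{(m)}$ we have $(f)=(m^d)$ there, so this algebra is $k[x]/(m^d)$ and the form is the image of $\Bez(f)$ in $k[X]/(m(X)^d)\otimes_k k[Y]/(m(Y)^d)$. On the other hand, by Cazanave \cite{Cazanave} the degree $\deg^{\A^1}(m^d/u)$ is the class of the B\'ezoutian form of $\Bez(m^d/u)(X,Y)=\frac{m(X)^d u(Y)-m(Y)^d u(X)}{X-Y}$, and because $\deg u<d\deg m$ this form likewise lives on $k[X]/(m(X)^d)\otimes_k k[Y]/(m(Y)^d)$.

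It remains to see the two forms coincide, which comes down to the polynomial identity
\[
\big(u(X)m(X)^d-u(Y)m(Y)^d\big)-\big(m(X)^d u(Y)-m(Y)^d u(X)\big)=\big(u(X)-u(Y)\big)\big(m(X)^d+m(Y)^d\big).
\]
Dividing by $X-Y$ gives $\Bez(f)-\Bez(m^d/u)=\frac{u(X)-u(Y)}{X-Y}\big(m(X)^d+m(Y)^d\big)$, whose image in $k[X]/(m(X)^d)\otimes_k k[Y]/(m(Y)^d)$ vanishes because $m(X)^d$ and $m(Y)^d$ do. Hence $\Bez(f)$ and $\Bez(m^d/u)$ induce the same bilinear form there, so their classes in $\GW(k)$ agree, proving the proposition. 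A similar manipulation (working with $m^{2d}$) shows that replacing $u$ by $u\bmod m^d$ leaves the image of $\Bez(f)$ unchanged, justifying the normalization made above.

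The one point requiring care — the genuine content beyond bookkeeping — is reconciling the two frameworks: checking that \cite{BMP21}'s local B\'ezoutian form at $p$ and Cazanave's global B\'ezoutian form of $m^d/u$ are forms on \emph{the same} finite $k$-algebra $k[x]/(m^d)$ with matching bases. This is exactly where one uses that the fiber of $m^d/u\colon\P^1_k\to\P^1_k$ over $[0:1]$ is set-theoretically $\{p\}$, so that $\deg^{\A^1}(m^d/u)$ has rank precisely $d\deg m=\dim_k k[x]/(m^d)$, with no extraneous hyperbolic summand. Alternatively, one can bypass B\'ezoutians and use the local-to-global additivity of the $\A^1$-degree over the fiber of $m^d/u$ above $[0:1]$, which reduces the claim to $\deg_p^{\A^1}(u^{-2}f)=\deg_p^{\A^1}(f)$, i.e.\ insensitivity of the local degree to multiplication by the square of a local unit; but the B\'ezoutian route is more direct given the tools already developed.
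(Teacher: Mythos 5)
Your proof is correct and follows the same route as the paper: reduce, via the isomorphism $k[x]_{(m)}/(f)\cong k[x]_{(m)}/(m^d)$ together with the B\'ezoutian descriptions of the local degree (BMP21) and the global degree (Cazanave), to the single polynomial identity $\Bez(f)-\Bez(m^d/u)=\tfrac{u(X)-u(Y)}{X-Y}\bigl(m(X)^d+m(Y)^d\bigr)\equiv 0\bmod{(m(X)^d,m(Y)^d)}$. Your extra bookkeeping — normalizing $u$ modulo $m^d$ so that $m^d/u$ is a pointed self-map of $\P^1_k$ of degree exactly $d\deg m$, and checking that this does not affect the class — makes explicit a point the paper's proof leaves implicit, but it is the same argument.
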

\begin{proof} 
Combining Cazanave's theorem with \cite{BMP21}, it suffices to show that $\Bez(f)\equiv\Bez(m^d/u)\bmod{(f(X),f(Y))}$. Moreover, since $u(x)$ is not contained in the ideal $(m(x))$, we have an isomorphism
\begin{align*}
    \frac{k[x]_{(m)}}{(f)}\cong\frac{k[x]_{(m)}}{(m^d)}
\end{align*}
of $k$-algebras. It thus suffices to show that $\Bez(f)\equiv\Bez(m^d/u)\bmod{(m(X)^d,m(Y)^d)}$. We compute that
\begin{align*}
    \Bez(f) &= \frac{u(X)m(X)^d - u(Y) m(Y)^d}{X-Y} \\
    &= \frac{u(X)m(X)^d-u(Y)m(Y)^d}{X-Y}+\frac{u(Y)m(X)^d-u(Y)m(X)^d}{X-Y}\\
    &\quad\ +\frac{u(X)m(Y)^d-u(X)m(Y)^d}{X-Y}\\
    &= \frac{u(Y)m(X)^d - u(X)m(Y)^d}{X-Y} + \frac{u(X)-u(Y)}{X-Y}(m(X)^d+m(Y)^d) \\
    &\equiv \frac{u(Y)m(X)^d - u(X)m(Y)^d}{X-Y} \bmod{(m(X)^d,m(Y)^d)}.\\
    &\equiv \Bez(m^d/u)\bmod{(m(X)^d,m(Y)^d)}.&\qedhere
\end{align*}
\end{proof}

\begin{corollary}\label{cor:local-deg-of-min-poly-at-itself} 
Let $p\in\A^1_k$ be a closed point, and let $m(x)\in k[x]$ be an irreducible polynomial that generates the maximal ideal corresponding to $p$. This polynomial determines a map $m:\A^1_k\to\A^1_k$, and
\[\deg_p^{\A^1}(m)=\deg^{\A^1}(m)=\deg^{\A^1}(\P^1_k\xto{m/1}\P^1_k).\]
\end{corollary}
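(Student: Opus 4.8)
The plan is to apply \autoref{prop:local-degrees-are-global-degrees} to the morphism $f = m : \A^1_k \to \A^1_k$. First I would check that $p$ is an isolated zero of $m$: since $m(x)$ is irreducible and generates the maximal ideal corresponding to $p$, we have $m(p) = 0$, and the fiber $m^{-1}(0) = V(m)$ is the single reduced closed point $p$ (as $m$ is irreducible, hence squarefree), so $p$ is certainly isolated in its fiber. Next I would read off the factorization from \autoref{prop:local-degrees-are-global-degrees}: writing $m(x) = u(x) m(x)^d$ with $u$ nonvanishing at $p$, the only possibility is $d = 1$ and $u(x) = 1$. Plugging $d=1$, $u=1$ into the conclusion of \autoref{prop:local-degrees-are-global-degrees} gives
\[
\deg_p^{\A^1}(m) = \deg^{\A^1}\left(\P^1_k \xto{m^1/1} \P^1_k\right) = \deg^{\A^1}\left(\P^1_k \xto{m/1} \P^1_k\right),
\]
which is the rightmost equality. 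For the middle equality $\deg^{\A^1}(m) = \deg^{\A^1}(\P^1_k \xto{m/1}\P^1_k)$, I would note that this is simply the statement that the global $\A^1$-degree of the polynomial map $m : \A^1_k \to \A^1_k$ agrees with Cazanave's/Morel's degree of the induced pointed endomorphism $m/1$ of $\P^1_k$ under the standard chart $\A^1_k \subseteq \P^1_k$ — this is the content of Cazanave's theorem together with \cite{BMP21}, already invoked in the proof of the preceding proposition.

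Essentially no step here is an obstacle: the corollary is the degenerate case $d=1$, $u=1$ of the proposition just proved, and the only thing to verify is that this specialization is legitimate, i.e.\ that $p$ genuinely is an isolated zero of $m$ so that \autoref{prop:local-degrees-are-global-degrees} applies. I would keep the write-up to a sentence or two: "Apply \autoref{prop:local-degrees-are-global-degrees} with $f = m$; the hypothesis that $m$ is irreducible forces the factorization $m = u \cdot m^d$ to have $d = 1$ and $u = 1$, and then the claimed chain of equalities is immediate."
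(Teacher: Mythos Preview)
Your proposal is correct and is essentially the paper's argument: both apply \autoref{prop:local-degrees-are-global-degrees} with $d=1$ and $u=1$. The only cosmetic difference is that the paper phrases the remaining equality $\deg_p^{\A^1}(m)=\deg^{\A^1}(m)$ via the explicit $k$-algebra isomorphism $k[x]/(m)\cong k[x]_{(m)}/(m)$ (together with \cite[Lemma~4.7]{BMP21}), whereas you get $\deg_p^{\A^1}(m)=\deg^{\A^1}(\P^1_k\xto{m/1}\P^1_k)$ from the proposition and then identify the right-hand side with $\deg^{\A^1}(m)$ via Cazanave/\cite{BMP21}; these are the same content.
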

\begin{proof} 
The equality $\deg^{\A^1}(m)=\deg^{\A^1}(\P^1_k\xto{m/1}\P^1_k)$ is a special case of \autoref{prop:local-degrees-are-global-degrees} (with $d=1$ and $u=1$). Morally speaking, $\deg_p^{\A^1}(m)=\deg^{\A^1}(m)$ since $p$ is the only root of $m$ over $k$. More precisely, the isomorphism 
\begin{align*}
    \frac{k[x]}{(m)}\cong \frac{k[x]_{(m)}}{(m)}
\end{align*}
of $k$-algebras preserves the B\'ezoutian and basis of $k[x]/(m)$. By \cite[Lemma 4.7]{BMP21}, it follows that $\deg_p^{\A^1}(m)=\deg^{\A^1}(m)$.
\end{proof}

\subsection{Hankel and block Hankel forms}
A \textit{Hankel matrix} is a symmetric matrix with constant anti-diagonals. A symmetric bilinear form that can be represented by a Hankel matrix is called a \textit{Hankel form}. Hankel matrices and forms are classical objects of study \cite{Iohvidov}. In the univariate setting, we may observe that B\'ezoutian bilinear forms of polynomials can be naturally represented by Hankel matrices. As a motivating example, consider the polynomial $f(x) = x^3 + 3x^2 - 4x + 1$. Its B\'{e}zoutian is given by
\begin{align*}
    \Bez(f)= \frac{f(X) - f(Y)}{X-Y} = (X^2 + XY + Y^2) + 3(X+Y) - 4.
\end{align*}
Writing this in monomial basis for the global algebra $k[x]/f(x)$, we obtain
\begin{align*}
    \deg^{\A^1}(f) = \left(\begin{tabular}{C | C C C}
    & 1 & X & X^2 \\
    \hline
    1 & -4 & 3 & 1 \\
    Y & 3 & 1 & 0 \\
    Y^2 & 1 & 0 & 0
    \end{tabular}\right).
\end{align*}
In particular, $\deg^{\A^1}(f)$ is a Hankel form. Note that all the anti-diagonals below the main anti-diagonal are constantly zero. We call such a form an \textit{upper triangular} Hankel form. The isomorphism class in $\GW(k)$ of an upper triangular Hankel form is well-understood --- interestingly, none of the information lying above the main anti-diagonal matters.

\begin{proposition}\label{prop:upper-Hankel-form}\cite[Lemma~6]{KW} 
Let $s_1,\ldots,s_d\in k$ with $s_d \ne 0$. Then the matrix
\begin{align*}
    \begin{pmatrix} s_1 & s_2 & \cdots & s_{d-1} & s_d \\
    s_2 & s_3 & \cdots & s_d & 0 \\
    \vdots & \vdots & \reflectbox{$\ddots$} & \vdots & \vdots \\
    s_{d-1}& s_d & \cdots & 0 & 0 \\
    s_d & 0 & \cdots & 0 & 0\end{pmatrix}.
\end{align*}
represents the $\GW(k)$ class
\begin{align*}
    \begin{cases} \frac{d}{2} \mathbb{H} & d\text{ is even} \\ \frac{d-1}{2} \mathbb{H} + \left\langle s_d \right\rangle & d\text{ is odd}. \end{cases}
\end{align*}
\end{proposition}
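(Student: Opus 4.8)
Proposition: the anti-triangular Hankel matrix with entries $s_1,\dots,s_d$ along the main anti-diagonal (with $s_d \neq 0$) and zeros below represents $\tfrac{d}{2}\mathbb{H}$ if $d$ is even and $\tfrac{d-1}{2}\mathbb{H} + \langle s_d\rangle$ if $d$ is odd.

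Let me think about how to prove this.

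\emph{Proof plan.} The plan is to argue by induction on $d$, peeling off one hyperbolic plane at each stage so that the induction step decreases $d$ by $2$; the base cases $d=1$ and $d=2$ are immediate, since $(s_1)$ with $s_1 = s_d \neq 0$ represents $\langle s_d\rangle$, while $\bigl(\begin{smallmatrix} s_1 & s_2 \\ s_2 & 0\end{smallmatrix}\bigr)$ has determinant $-s_d^2 \neq 0$ and contains the isotropic vector $e_2$, hence is isometric to $\mathbb{H}$. Write $b$ for the symmetric bilinear form on $k^d$ with the given Gram matrix in the standard basis $e_1,\dots,e_d$, so that $b(e_i,e_j) = s_{i+j-1}$ with the convention $s_m = 0$ for $m>d$.

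For the inductive step, I would first observe that the plane $P = \mathrm{span}(e_1,e_d)$ is nondegenerate, with Gram matrix $\bigl(\begin{smallmatrix} s_1 & s_d \\ s_d & 0\end{smallmatrix}\bigr)$ of determinant $-s_d^2 \neq 0$; since $e_d$ is isotropic, $P \cong \mathbb{H}$, and consequently $b \cong \mathbb{H} \perp b|_{P^\perp}$ in $\GW(k)$. Next I would identify $P^\perp$ explicitly: a vector $v = \sum v_i e_i$ lies in $P^\perp$ exactly when $b(v,e_d) = s_d v_1 = 0$ and $b(v,e_1) = \sum_j s_j v_j = 0$. The first condition forces $v_1 = 0$ (here $s_d \neq 0$ is used), and one then checks that the vectors $f_k := e_k - (s_k/s_d)e_d$ for $2 \le k \le d-1$ are $d-2$ linearly independent elements of $P^\perp$, hence a basis of it.

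The crux is to compute $b|_{P^\perp}$ in this basis and recognize it. Using that $b(e_k,e_d) = s_{k+d-1} = 0$ for $k \ge 2$ and $b(e_d,e_d) = s_{2d-1} = 0$, all the correction terms vanish and $b(f_k,f_l) = b(e_k,e_l) = s_{k+l-1}$ for $2 \le k,l \le d-1$. Reindexing by $i = k-1$, $j = l-1$ and setting $s_i' := s_{i+2}$, the Gram matrix of $b|_{P^\perp}$ becomes $(s'_{i+j-1})_{i,j=1}^{d-2}$, which is exactly the upper triangular Hankel matrix of the proposition for the parameters $s_1',\dots,s_{d-2}'$, with leading value $s'_{d-2} = s_d \neq 0$. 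Applying the inductive hypothesis and adding $\mathbb{H}$ yields the claim, the parity of $d-2$ matching that of $d$ so that the summand $\langle s_d\rangle$ persists precisely when $d$ is odd. The step that requires care is this last computation: it is exactly the upper triangularity hypothesis, which forces $b(e_k,e_d) = 0$ for $k \ge 2$, that makes $P^\perp$ inherit the same anti-triangular Hankel shape and lets the induction close. I also implicitly use that a nondegenerate binary symmetric bilinear form with an isotropic vector is hyperbolic, which is standard away from characteristic $2$; alternatively one may simply invoke \cite[Lemma~6]{KW}.
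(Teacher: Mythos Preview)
Your proof is correct. The paper does not give a formal proof in the main text (it cites \cite[Lemma~6]{KW}), but the appendix sketches a different argument: rather than inducting, it produces a single explicit change of basis. For each $i \le \lfloor d/2 \rfloor$ one sets
\[
\psi_i \;=\; \tfrac{s_{2i-1}}{2}\,x_i \;+\; \sum_{j>i} s_{i+j-1}\, x_j,
\]
i.e.\ the $i$th row of the Gram matrix from the diagonal onward with the diagonal entry halved; in the basis $\{x_1,\psi_1,x_2,\psi_2,\ldots\}$ the form then visibly decomposes as a sum of hyperbolic planes, together with the middle term $s_d\, x_{(d+1)/2}^{\otimes 2}$ when $d$ is odd.

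Your approach instead peels off the plane $P=\operatorname{span}(e_1,e_d)$ and observes that $P^\perp$, in the basis $f_k=e_k-(s_k/s_d)e_d$, is again anti-triangular Hankel of size $d-2$ with the \emph{same} leading entry $s_d$, so induction closes. This is arguably tidier and relies only on the standard fact that a nondegenerate binary form with an isotropic vector is hyperbolic; it also divides only by $s_d$ rather than by $2$, though as you note the identification $P\cong\mathbb{H}$ still uses $\operatorname{char}k\neq 2$. The advantage of the paper's one-shot basis change is that it extends verbatim to the block setting of \autoref{lem:block-hankel-diag}: the elements $\psi_i^\ell$ defined there follow exactly the same ``half the diagonal entry, then copy the rest of the row'' recipe, which is where the paper actually needs this computation.
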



The global $\A^1$-degree of any polynomial map $\P^1_k\to\P^1_k$ is an upper triangular Hankel form, so \autoref{prop:upper-Hankel-form} characterizes such $\A^1$-degrees. This characterization alternatively follows from the fact that any univariate polynomial can be na\"ively $\A^1$-homotoped to its leading term \cite[Example 2.4]{Cazanave}.

One might ask whether local $\A^1$-degrees of univariate polynomials exhibit a similar symmetry. Since localizing the global algebra $k[x]/(f)$ at a maximal ideal $m(x)\cdot k[x]$ (corresponding to an isolated zero $p$ of $f$) can decrease its rank, the monomials $\{1,x,\ldots,x^{\deg(f)}\}$ may not form a basis of $k[x]_{(m)}/(f)$. In a suitable basis, we will show that the Gram matrix of $\deg_p^{\A^1}(f)$ is a block upper triangular matrix with constant blocks on each anti-diagonal. We call such a form a \textit{block Hankel form}. We will also see that each block in this Gram matrix for $\deg_p^{\A^1}(f)$ is itself a Hankel matrix.\footnote{We have elected to not call this a \textit{Hankel block Hankel form}.}

As in \autoref{prop:upper-Hankel-form}, we will demonstrate that information above the main off-diagonal of blocks does not affect the $\GW(k)$ class of a block Hankel form. We first introduce some notation before proving this general result.

\begin{notation}\label{notn:block matrix}
Let $V$ be an algebra over a field $K$. Let
\[\mc{B}:=\left\{ a_1b_1,\ldots,a_1b_n,\ldots,a_db_1,\ldots,a_db_n \right\}\]
be a vector space basis for $V$. Let $\beta$ be a bilinear form on $V$. The $dn \times dn$ Gram matrix for $\beta$ in the basis $\mc{B}$ can be written as
\begin{align*}
    \beta_{\mc{B}} = \left(\begin{tabular}{C | C C C C} 
    & a_1 & a_2 & \cdots & a_d \\
    \hline
    a_1 & A_{11} & A_{12} & \cdots & A_{1d} \\
    a_2 & A_{21} & A_{22} & \cdots & A_{2d} \\
    \vdots & \vdots & \vdots & \ddots & \vdots \\
    a_n & A_{d1} & A_{d2} & \cdots & A_{dd} \end{tabular} \right),
\end{align*}
where each $A_{ij}$ is a block matrix of the form
\begin{align*}
    A_{ij} = \left(\begin{tabular}{C | C C C C} 
    & a_j b_1 & a_j b_2 & \cdots & a_j b_n \\
    \hline
    a_i b_1 & \beta_{ij}^{11} & \beta_{ij}^{12} & \cdots & \beta_{ij}^{1n} \\
    a_i b_2 & \beta_{ij}^{21} & \beta_{ij}^{22} & \cdots & \beta_{ij}^{2n} \\
    \vdots & \vdots & \vdots & \ddots & \vdots \\
    a_i b_n & \beta_{ij}^{n1} & \beta_{ij}^{n2} & \cdots & \beta_{ij}^{nn} \end{tabular} \right).
\end{align*}
That is, $\beta_{ij}^{\ell k}$ is the coefficient appearing on $a_i b_\ell \otimes a_j b_k$ in $\beta$.
\end{notation}

\begin{lemma}\label{lem:block-hankel-diag} Let $V$, $\mc{B}$, and $\beta$ be as in \autoref{notn:block matrix}. Assume that $\Char{K}\neq 2$. Suppose that $\beta$ is non-degenerate, and that $\beta_{\mc{B}}$ is a block Hankel matrix
\begin{align*}
    \beta_{\mc{B}} = \left(\begin{tabular}{C C C C C}
    A_1 & A_2 & \cdots & A_{d-1} & A_d \\
    A_2 & A_3 & \cdots & A_d & 0 \\
    \vdots & \vdots  & \reflectbox{$\ddots$} & \vdots & \vdots \\
    A_{d-1} & A_d & \cdots & 0 & 0 \\
    A_d & 0 & \cdots & 0 & 0
    \end{tabular} \right).
\end{align*}
Also suppose that each $A_i$ is an $n \times n$ Hankel matrix
\begin{align*}
    A_i = \begin{pmatrix} \beta_i^1 & \beta^2_i & \cdots & \beta^n_i \\
    \beta^2_i & \beta^3_i & \cdots & \beta^{n+1}_i \\
    \vdots & \vdots & \reflectbox{$\ddots$} & \vdots \\
    \beta^{n}_i & \beta^{n+1}_i & \cdots & \beta^{2n-1}_i
    \end{pmatrix}. 
\end{align*}
Then the class in $\GW(K)$ of $\beta$ is $\frac{nd}{2}\mb{H}$ if $d$ is even and $\frac{n(d-1)}{2}\mb{H}+\hat{A}_d$ if $d$ is odd.\footnote{Here we are abusing notation to conflate the Gram matrix $A_d$ with the isomorphism class of forms it represents in $\GW(k)$.}
\end{lemma}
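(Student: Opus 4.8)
The plan is to reduce the block statement to the scalar statement of \autoref{prop:upper-Hankel-form} by an explicit change of basis that simultaneously "diagonalizes the block structure" while keeping track of the Hankel structure inside each block. The key observation is that a block upper triangular Hankel matrix over $K$ with invertible anti-diagonal block $A_d$ behaves, at the level of Witt classes, exactly like a scalar upper triangular Hankel matrix: the entries strictly above the main anti-diagonal can be cleared by congruence, and what remains is a sum of hyperbolic pieces coming from the pairs $(A_i, A_{d-i})$ plus a single copy of $A_d$ when $d$ is odd.

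First I would handle the block-level reduction. Group the basis vectors $\mc{B}$ into $d$ blocks $W_1,\ldots,W_d$, where $W_i=\spn\{a_ib_1,\ldots,a_ib_n\}$, so that $\beta_{\mc{B}}$ has the displayed block anti-triangular form with blocks $A_1,\ldots,A_d$ on and above the main anti-diagonal. Since $\beta$ is non-degenerate and $A_d$ is the anti-diagonal block, $A_d$ must be invertible. When $d$ is even, pair $W_i$ with $W_{d+1-i}$ for $i=1,\ldots,d/2$: each such pair spans a $2n$-dimensional subspace on which $\beta$ restricts to a form with Gram matrix $\begin{pmatrix} A_{?} & A_d \\ A_d & 0\end{pmatrix}$ after clearing higher anti-diagonals by congruence (add suitable $K$-linear combinations of later blocks to earlier ones; this is the block analogue of the standard reduction, and it is legitimate because all the $A_i$ commute trivially as we only use them as coefficient blocks and $\Char K\neq 2$). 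A form $\begin{pmatrix} B & C \\ C & 0\end{pmatrix}$ with $C$ invertible is metabolic, hence hyperbolic since $\Char K \ne 2$, contributing $n\mathbb{H}$. Summing over the $d/2$ pairs gives $\frac{nd}{2}\mathbb{H}$. When $d$ is odd, the same pairing handles $i=1,\ldots,(d-1)/2$, contributing $\frac{n(d-1)}{2}\mathbb{H}$, and leaves the middle block $W_{(d+1)/2}$, on which $\beta$ restricts (after the congruences) to the Hankel matrix $A_{(d+1)/2}$... but wait — the surviving block is actually the one paired with nothing, and by the anti-diagonal bookkeeping its own contribution is congruent to $A_d$, not $A_{(d+1)/2}$: the clearing moves push its Gram matrix to $A_d$. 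This sign/index bookkeeping is exactly the scalar computation in \autoref{prop:upper-Hankel-form} carried out one block at a time, so I would either invoke that proposition directly on a $d\times d$ "matrix with entries in a commutative ring of blocks" (after noting the argument there only uses congruence moves valid over any commutative ring with $2$ invertible) or redo the three-line induction in the block setting.

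The step I expect to be the main obstacle is making the reduction "$A_d$ invertible $\Rightarrow$ the $\begin{pmatrix} B & A_d \\ A_d & 0\end{pmatrix}$ piece is hyperbolic" completely rigorous at the block level and correctly identifying which block survives in the odd case. The cleanest route is: (i) show $\beta_{\mc{B}}$ is congruent over $K$ to the block matrix with $A_1,\ldots,A_{d-1}$ replaced by $0$ and only the anti-diagonal $A_d$'s retained, when $d$ is even; when $d$ is odd, congruent to the anti-diagonal-$A_d$ matrix with one extra $A_d$ block on the main diagonal in the center — this is literally \autoref{prop:upper-Hankel-form} applied blockwise. (ii) Then the even case is $\frac{d}{2}$ copies of $\begin{pmatrix}0 & A_d\\ A_d & 0\end{pmatrix}\cong n\mathbb{H}$ (using that $\begin{pmatrix}0&M\\ M&0\end{pmatrix}$ is hyperbolic for $M$ invertible, $\Char K \ne 2$), and the odd case is that plus one copy of $A_d$, i.e.\ the class $\hat A_d$. (iii) Finally, the claim that each $A_i$ is itself Hankel is not actually needed for the $\GW$-class computation — it is extra structural information recorded for later use — so I would remark that the Hankel hypothesis on the $A_i$ plays no role here beyond bookkeeping. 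I would close by noting this completes the proof, citing \autoref{prop:upper-Hankel-form} for the blockwise congruence and the standard fact that $\begin{pmatrix}0 & M \\ M & 0\end{pmatrix}$ is hyperbolic.
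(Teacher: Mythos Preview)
Your proposal is correct and is close in spirit to the paper's proof, though the organization is different enough to merit a word. The paper writes down, in one step, an explicit new basis $\{a_ib_\ell,\psi_i^\ell\}_{i\le\lfloor d/2\rfloor,\ \ell\le n}$ (together with $\{a_{(d+1)/2}b_\ell\}$ when $d$ is odd), where each $\psi_i^\ell$ is ``half the diagonal entry plus the rest of the row''; in this basis the form is visibly a sum of hyperbolic planes plus, when $d$ is odd, a single block with Gram matrix $A_d$. You instead perform a two-stage reduction: first use block row/column operations (with $A_d$ invertible and $\tfrac12\in K$) to clear everything above the main anti-diagonal, then identify each $\left(\begin{smallmatrix}0&A_d\\A_d&0\end{smallmatrix}\right)$ as $n\mathbb{H}$, with the middle block $A_d$ surviving in the odd case. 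Both arguments are the same congruence computation; yours is more modular, the paper's more explicit. Your observation that the inner Hankel structure on the $A_i$ is unnecessary is exactly the remark the paper makes in its appendix.

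One small caution: your suggestion to invoke \autoref{prop:upper-Hankel-form} over a ``commutative ring of blocks'' is not quite right, since $n\times n$ matrices do not commute and the clearing step genuinely needs $A_d^{-1}$ on a specific side. Your fallback---redoing the short induction blockwise---is the correct route, and is in effect what the paper's explicit $\psi_i^\ell$ accomplish.
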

\begin{proof} 
The goal here is to exhibit a basis $\mc{B}'$ such that the Gram matrix $\beta_{\mc{B}'}$ is block diagonal. In the basis $\mc{B}$, the Gram matrix for $\beta$ can be written as
\begin{align*}
    \beta_{\mc{B}} = \sum_{i,j=1}^d\sum_{\ell,k=1}^n \beta_{i+j-1}^{\ell+k-1} a_i b_\ell \otimes a_j b_k
\end{align*}
for some scalars $\beta_{i+j-1}^{\ell+k-1}\in K$. We will recursively use the rows of $\beta_{\mc{B}}$ to construct the basis $\mc{B}'$. See \autoref{sec:pictorial-intuition} for the intuition behind the following details. For $1 \le i \le \floor{\frac{d}{2}}$ and $1\le \ell \le n$, let
\begin{align*}
    \psi_i^\ell &= \frac{\beta_i^{2\ell-1}}{2} a_i b_{\ell} + \sum_{k=\ell+1}^n \beta_i^{2\ell-1+k} a_i b_k + \sum_{j=i+1}^d \sum_{k=1}^n \beta_j^{k+\ell-1} a_j b_k.
\end{align*}
Now let
\begin{align*}
    \mc{B}'&=\{a_1b_1,\psi_1^1,a_1b_2,\psi_1^2,\ldots,a_1b_n,\psi_1^n,\ldots,a_{\floor{d/2}}b_n,\psi_{\floor{d/2}}^n\}\\
    &\quad\ \cup\begin{cases} \emptyset & d\text{ is even} \\ \left\{ a_{\frac{d+1}{2}} b_1, \ldots, a_{\frac{d+1}{2}} b_n \right\} & d\text{ is odd}. \end{cases}
\end{align*}
The assumption that $\beta$ is non-degenerate implies that the elements of $\mc{B}'$ are linearly independent, so $\mc{B}'$ is a $K$-basis for $V$. We now rewrite $\beta_{\mc{B}}$ in terms of $\mc{B}'$:
\begin{align*}
    \beta_{\mc{B}} &= \sum_{i=1}^{\floor{d/2}} \sum_{\ell=1}^n \left( a_i b_\ell \otimes \psi_i^{\ell} + \psi_i^{\ell} \otimes a_i b_\ell \right) + \begin{cases} 0 & d\text{ is even} \\ \sum_{\ell,k=1}^n \beta_d^{\ell+k-1} a_{\frac{d+1}{2}} b_\ell \otimes a_{\frac{d+1}{2}} b_k & d\text{ is odd}. \end{cases}
\end{align*}
It follows that $\beta_{\mc{B}'}$ is block diagonal. For $1 \le i \le \floor{\frac{d}{2}}$, the $i\textsuperscript{th}$ block of $\beta_{\mc{B}'}$ (corresponding to the basis elements $\left\{ a_i b_1, \psi_i^1, \ldots, a_i b_n, \psi_i^n \right\}$) is
\[    \begin{tabular}{C | C C  C C  C  C C }
    & a_i b_1 & \psi_i^1 & a_i b_2 & \psi_i^2 & \cdots & a_i b_n & \psi_i^n \\
    \hline
    a_i b_1 & 0 & 1 &  &  &  &  &  \\
    \psi_i^1 & 1 & 0 &  &  &  &  &  \\
    a_i b_2 &  &  & 0 & 1 &  &  &  \\
    \psi_i^2 &  &  & 1 & 0 &  &  &  \\
    \vdots &  &  &  &  & \ddots &  &  \\
    a_i b_n &  &  &  &  &  & 0 & 1 \\
    \psi_i^n &  &  &  &  &  & 1 & \phantom{.}0.
\end{tabular} \]
This is a block sum of $n$ copies of the hyperbolic form $\mathbb{H}$. If $d$ is odd, the final block of $\beta_{\mc{B}'}$ (corresponding to the basis elements $\{a_{\frac{d+1}{2}}b_1,\ldots,a_{\frac{d+1}{2}}b_n\}$) is simply $A_d$. It follows that $\beta$ is the direct sum of hyperbolic forms, along with a direct summand of $\hat{A}_d$ when $d$ is odd.
\end{proof}

In \autoref{sec:lifts}, we will use \autoref{lem:block-hankel-diag} to compare the local $\A^1$-degree of a function $f$ with the transfer of the local $\A^1$-degree of the lift of $f$.

\subsection{Horner bases}
Many of our calculations in \autoref{sec:lifts} involve choosing convenient bases of quotients of polynomial rings. The \textit{Horner basis}, defined below, is a basis which is dual to the monomial basis with respect to the Scharlau form (see \autoref{prop:scharlau-horner}); this fact will be useful when we prove \autoref{lem:geometric-lift}. We will collect a few definitions and results from \cite{real-ag} for later use.

\begin{definition}\label{defn:Horner basis} \cite[Notation 8.6]{real-ag}
Let $m(x)=x^n+a_{n-1}x^{n-1}+\ldots+a_0\in k[x]$. Define the \textit{Horner polynomials}
\[\Hor_i(m,x):=\begin{cases}
1 & i=0,\\
x\Hor_{i-1}(m,x)+a_{n-i} & 1 \le i<n.
\end{cases}\]
The set $\{\Hor_{n-1}(m,x),\Hor_{n-2}(m,x),\ldots,\Hor_{0}(m,x)\}$ forms a $k$-basis of $k[x]/(m)$, which is called the \textit{Horner basis}.
\end{definition}

Let $s:k[x]/m(x)\to k$ be the Scharlau form associated to the primitive element $x$. The following proposition states that $s$ is a dualizing form for the monomial and Horner bases, in the sense of \cite[Definition~2.1]{BMP21}.

\begin{proposition}\label{prop:scharlau-horner} \cite[Proposition 9.18]{real-ag}
Let $0\leq i,j\leq n-1$. Then
\[s(x^i\Hor_{n-1-j}(m,x))=\begin{cases}1 & i=j,\\
0 & i\neq j.
\end{cases}\]
\end{proposition}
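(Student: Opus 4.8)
The plan is to combine the elementary description of the Scharlau form with a closed form for the Horner polynomials. First I would record the following reformulation of $s$: since $B_{L/k}=\{1,x,\ldots,x^{n-1}\}$ is a $k$-basis of $L:=k[x]/(m)$ and $s$ is the $k$-linear map with $s(x^\ell)=\delta_{\ell,n-1}$ for $0\le\ell\le n-1$, the functional $s$ sends any class to the coefficient of $x^{n-1}$ in its reduction modulo $m$. In particular, if $p(x)\in k[x]$ has degree $\le n-2$, then $p(x)$ is its own reduction and $s(p(x))=0$, while if $p(x)$ has degree exactly $n-1$ then $s(p(x))$ is the leading coefficient of $p$.

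Next I would unwind the recursion in \autoref{defn:Horner basis} by a one-line induction to obtain the closed form $\Hor_r(m,x)=x^r+a_{n-1}x^{r-1}+\cdots+a_{n-r}$ for $0\le r\le n-1$. Multiplying by $x^{n-r}$ and comparing with $m(x)=x^n+a_{n-1}x^{n-1}+\cdots+a_0$ yields the key identity
\[
x^{n-r}\Hor_r(m,x)=m(x)-L_r(x),\qquad L_r(x):=a_{n-r-1}x^{n-r-1}+\cdots+a_1x+a_0,
\]
so that $\deg L_r\le n-r-1$ and, in $L$, one has $x^{n-r}\Hor_r(m,x)\equiv -L_r(x)$.

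The proposition then follows from a three-way case analysis, taking $r=n-1-j$ so that $n-r=j+1$ and $\deg L_{n-1-j}\le j$. If $i=j$, then $x^j\Hor_{n-1-j}(m,x)$ is monic of degree exactly $n-1$ and already reduced, so $s$ of it equals its leading coefficient $1$. If $i<j$, then $x^i\Hor_{n-1-j}(m,x)$ has degree $i+(n-1-j)\le n-2$ and is reduced, so $s$ of it is $0$. If $i>j$, multiply the identity $x^{j+1}\Hor_{n-1-j}(m,x)\equiv -L_{n-1-j}(x)$ by $x^{i-j-1}$ to get $x^i\Hor_{n-1-j}(m,x)\equiv -x^{i-j-1}L_{n-1-j}(x)$ in $L$, which is represented by a polynomial of degree $\le(i-j-1)+j=i-1\le n-2$, so $s$ of it is again $0$. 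These three cases exhaust $0\le i,j\le n-1$ and give exactly $\delta_{ij}$.

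The only genuine subtlety is the case $i>j$, which is the one requiring a reduction modulo $m$; the identity $x^{n-r}\Hor_r(m,x)=m(x)-L_r(x)$ is precisely what makes that reduction transparent, so the main thing to get right is the indexing $r=n-1-j$ and the degree bookkeeping for $L_r$. Everything else is immediate from the $k$-linearity of $s$ and the closed form for $\Hor_r$.
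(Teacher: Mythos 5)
Your proof is correct, and it is genuinely different from what the paper does: the paper simply defers to \cite[Proposition 9.18]{real-ag} and records only the compatibility of terminology (the Kronecker form of \emph{loc.\ cit.} agrees with the Scharlau form because $m$ is monic), whereas you give a self-contained elementary argument. Your key moves --- reformulating $s$ as ``extract the coefficient of $x^{n-1}$ of the reduced representative,'' unwinding the Horner recursion to the closed form $\Hor_r(m,x)=x^r+a_{n-1}x^{r-1}+\cdots+a_{n-r}$, and exploiting the identity $x^{n-r}\Hor_r(m,x)=m(x)-L_r(x)$ with $\deg L_r\le n-r-1$ --- are all correct, and your degree bookkeeping in the three cases ($i=j$ monic of degree exactly $n-1$; $i<j$ degree $\le n-2$; $i>j$ reduce once and multiply through by $x^{i-j-1}$ to land back in degree $\le n-2$) checks out. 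What your approach buys is a proof readable without opening \cite{real-ag}; what the paper's citation buys is brevity and a pointer to the general Kronecker-form machinery (trace/dualizing forms for arbitrary, not necessarily monic, $m$) that \cite{real-ag} develops around this identity. If you wanted to shorten your write-up you could also note that all three cases collapse to one observation: $x^i\Hor_{n-1-j}(m,x)$ reduces mod $m$ to a polynomial whose $x^{n-1}$-coefficient is $1$ iff $i=j$, since the reduced representative has degree $i+(n-1-j)$ when $i\le j$ and degree $\le i-1$ when $i>j$.
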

\begin{proof}
Since we have assumed that $m(x)$ is monic, the Kronecker form mentioned in \textit{loc.~cit.}~is equal to the Scharlau form.
\end{proof}

By \cite[Proposition~3.5(2)]{BMP21}, the Scharlau form gives a straightforward way to write down elements of $k[x]/m(x)$ in terms of the Horner basis. This is also proved directly in \cite[Corollary 9.19]{real-ag}.

\begin{corollary}\label{cor:horner-basis} 
For any $g\in k[x]/m(x)$, we have
\[g(x)\equiv\sum_{i=0}^{n-1}s(x^ig(x))\Hor_{n-1-i}(m,x)\bmod{(m(x))}.\]
\end{corollary}

We now show that there is a close connection between the B\'ezoutian of $m$ and the Horner basis associated to $m$. In the language of \cite[Definition 3.8]{BMP21}, we will demonstrate that the bilinear form induced by the Scharlau form is in fact a B\'ezoutian bilinear form.

\begin{proposition}\label{prop:Bezoutian-of-m} We have an equality in $k[X,Y]$ of the form
\begin{align*}
    \frac{m(X)-m(Y)}{X-Y} &= \sum_{i=0}^{n-1} X^i \Hor_{n-1-i}(m,Y).
\end{align*}
\end{proposition}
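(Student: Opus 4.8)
The plan is to expand the left-hand side as a polynomial in $X$ with coefficients in $k[Y]$ and recognize the coefficients as Horner polynomials. Write $m(x) = x^n + a_{n-1}x^{n-1} + \cdots + a_0$. The polynomial $\frac{m(X)-m(Y)}{X-Y}$ is the standard divided difference, and for the monomial $x^\ell$ we have the elementary identity $\frac{X^\ell - Y^\ell}{X-Y} = \sum_{r=0}^{\ell-1} X^r Y^{\ell-1-r}$. Summing over the terms of $m$ with $\ell \geq 1$, the coefficient of $X^i$ (for $0 \le i \le n-1$) in $\frac{m(X)-m(Y)}{X-Y}$ is $\sum_{\ell = i+1}^{n} a_\ell Y^{\ell-1-i}$, where $a_n := 1$. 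So the claim reduces to the polynomial identity
\[
\Hor_{n-1-i}(m,Y) = \sum_{\ell=i+1}^{n} a_\ell Y^{\ell-1-i}
\]
for each $0 \le i \le n-1$.

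The cleanest way to finish is induction on $i$, running downward from $i = n-1$ to $i=0$ (equivalently, induction on the Horner index $j = n-1-i$ running upward from $0$). For $i = n-1$ (i.e. $j=0$), the right-hand sum has the single term $\ell = n$, giving $a_n Y^0 = 1 = \Hor_0(m,Y)$, matching the base case of Definition~\ref{defn:Horner basis}. For the inductive step, suppose the formula holds for index $i+1$, i.e. $\Hor_{n-2-i}(m,Y) = \sum_{\ell=i+2}^n a_\ell Y^{\ell-2-i}$. By the recursion $\Hor_{n-1-i}(m,Y) = Y\cdot \Hor_{n-2-i}(m,Y) + a_{n-(n-1-i)} = Y\cdot\Hor_{n-2-i}(m,Y) + a_{i+1}$, so
\[
\Hor_{n-1-i}(m,Y) = \sum_{\ell=i+2}^n a_\ell Y^{\ell-1-i} + a_{i+1} = \sum_{\ell=i+1}^n a_\ell Y^{\ell-1-i},
\]
which is exactly what we wanted. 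This completes the induction and hence the identity.

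There is really no serious obstacle here; the only thing to be careful about is bookkeeping with the index shift between the Horner subscript $n-1-i$ and the power $i$ of $X$, and the convention $a_n = 1$ so that the leading term of $m$ is included in the divided-difference sum. One should double-check that the top coefficient (the $X^{n-1}$ term on the left) comes out to $1 = \Hor_0$, and that there are no stray $X^n$ terms — indeed, since $\deg_X\big(\tfrac{m(X)-m(Y)}{X-Y}\big) = n-1$, the expansion only involves $X^0, \dots, X^{n-1}$, matching the range of the sum in the statement. Alternatively, one could avoid induction entirely by substituting the closed-form expression for $\Hor_{n-1-i}$ (which is itself easily verified from the recursion) directly into both sides, but the downward induction is the most transparent route.
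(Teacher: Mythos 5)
Your proof is correct and takes essentially the same approach as the paper: both expand $\frac{m(X)-m(Y)}{X-Y}$ via the divided-difference expansion of each monomial and then identify the coefficient of $X^i$ with $\Hor_{n-1-i}(m,Y)$. The only minor difference is that the paper states the closed form for the coefficients of the Horner polynomials as a known fact and matches coefficients of $X^iY^j$, whereas you derive that closed form explicitly by a short induction on the Horner recursion.
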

\begin{proof} 
Since $m(x)=\sum_{i=0}^n a_ix^i$ is a polynomial, its B\'ezoutian can be written as
\begin{align*}
    \frac{m(X)-m(Y)}{X-Y} &= \sum_{\ell=1}^n a_\ell \left( \sum_{i+j=\ell-1} X^i Y^j \right) = \sum_{i+j=0}^{n-1} a_{i+j+1} X^i Y^j.
\end{align*}
Next, the coefficient of $Y^j$ in $\Hor_{i}(m,Y)$ is $a_{n+j-i}$ when $i\geq j$, and is zero otherwise. In particular, the coefficient of $X^iY^j$ in $X^i\Hor_{n-1-i}(m,Y)$ is $a_{i+j+1}$.
Thus the coefficients of $X^iY^j$ in $\Bez(m)$ and $\sum_{i=0}^{n-1}X^i\Hor_{n-1-i}(m,Y)$ agree.
\end{proof}

To conclude this section, we will relate the coefficients of the B\'ezoutian in the Horner basis to the coefficients of the Scharlau transfer in the monomial basis. Since the Scharlau transfer is equal to the geometric transfer for finite simple extensions (\autoref{lem:geometric is scharlau}), the following result will be useful when computing a geometric transfer in \autoref{lem:geometric-lift}. See also \cite[Proposition 9.20]{real-ag}.

\begin{proposition}\label{prop:multiply-by-bez-to-get-geom-tr}
Let $L/k$ be a finite simple extension with primitive element $t$, and let $m(x)\in k[x]$ be the minimal polynomial of $t$. Given any $u(x)\in L[x]$, the coefficient matrix of $u(X)\frac{m(X)-m(Y)}{X-Y}$ in the Horner basis is equal to the coefficient matrix of $s_*\langle u(t)\rangle$ in the monomial basis.
\end{proposition}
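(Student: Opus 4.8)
The plan is to compute both matrices explicitly in terms of the Scharlau form $s$ and observe that they agree entry by entry; the two inputs that do the work are \autoref{prop:Bezoutian-of-m} and the expansion formula of \autoref{cor:horner-basis}, with the ordering of the Horner basis chosen so that \autoref{prop:scharlau-horner} pairs it up correctly with the monomial basis.

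First I would invoke \autoref{prop:Bezoutian-of-m} to write $\tfrac{m(X)-m(Y)}{X-Y}=\sum_{i=0}^{n-1}X^i\Hor_{n-1-i}(m,Y)$, and then multiply through by $u(X)$, getting $u(X)\tfrac{m(X)-m(Y)}{X-Y}=\sum_{i=0}^{n-1}\bigl(u(X)X^i\bigr)\Hor_{n-1-i}(m,Y)$. The $Y$-part is already in the Horner basis, so the only remaining task is to rewrite each $u(X)X^i$ in the Horner basis of $k[x]/(m)$, i.e.\ modulo $m(X)$. Applying \autoref{cor:horner-basis} to the class $u(X)X^i$ gives $u(X)X^i\equiv\sum_{\ell=0}^{n-1}s\bigl(X^\ell u(X)X^i\bigr)\Hor_{n-1-\ell}(m,X)\pmod{m(X)}$, where $s$ is the Scharlau form for the primitive element $x$. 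Substituting, the Bézoutian reduces modulo $(m(X),m(Y))$ to $\sum_{\ell,i}s\bigl(X^{\ell+i}u(X)\bigr)\Hor_{n-1-\ell}(m,X)\Hor_{n-1-i}(m,Y)$, so the coefficient matrix in the Horner basis has $(\ell,i)$ entry $s\bigl(X^{\ell+i}u(X)\bigr)$, which after identifying $X\bmod m(x)$ with $t$ is $s\bigl(t^{\ell+i}u(t)\bigr)$.

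For the other matrix: by definition $\langle u(t)\rangle$ is the rank one form $(v,w)\mapsto u(t)vw$, so $s_*\langle u(t)\rangle$ is the form $(v,w)\mapsto s\bigl(u(t)vw\bigr)$, and evaluating on the monomial basis $\{1,t,\dots,t^{n-1}\}$ produces the Gram matrix with $(\ell,i)$ entry $s\bigl(u(t)\,t^\ell t^i\bigr)=s\bigl(t^{\ell+i}u(t)\bigr)$. This is precisely the matrix found in the previous paragraph, so the two coincide (with the Horner basis ordered $\Hor_{n-1}(m,x),\dots,\Hor_0(m,x)$ so that the $\ell$-th Horner vector is dual to $x^\ell$ under $s$, matching the $\ell$-th monomial).

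The part that needs the most care — and the only real obstacle — is the bookkeeping in the middle step: one must remember that $s(X^r)$ for $r\ge n$ means $s$ applied after reducing modulo $m(X)$ (automatic once one works in $k[x]/(m)$), that \autoref{cor:horner-basis} legitimately applies to $u(X)X^i$ even when $\deg u\ge1$ since it is a statement about arbitrary classes in the quotient, and that the Scharlau form and the evaluation ``$u(t)$'' are used consistently — when $u$ has coefficients outside $k$ this is cleanest if one works in $L[x]/(m(x))$ with its $L$-linear Scharlau form, so that ``$u(t)$'' is the residue class of $u(X)$. Beyond this indexing care there is no difficulty; the content is entirely carried by \autoref{prop:Bezoutian-of-m} and the monomial/Horner duality of \autoref{prop:scharlau-horner}.
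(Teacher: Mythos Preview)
Your proof is correct and follows essentially the same route as the paper: invoke \autoref{prop:Bezoutian-of-m}, multiply by $u(X)$, expand each $u(X)X^i$ in the Horner basis via \autoref{cor:horner-basis}, and identify the resulting coefficients $s(X^{\ell+i}u(X))$ with the Gram entries $s(t^{\ell+i}u(t))$ of $s_*\langle u(t)\rangle$. Your explicit remarks about reducing modulo $(m(X),m(Y))$ and working with the $L$-linear Scharlau form when $u$ has coefficients in $L$ are helpful clarifications that the paper leaves implicit.
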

\begin{proof} 
By \autoref{prop:Bezoutian-of-m}, we have
\begin{align*}
    \frac{m(X)-m(Y)}{X-Y} &= \sum_{i=0}^{n-1} X^i \Hor_{n-1-i}(m,Y).
\end{align*}
Multiplying both sides by $u(X)$, we obtain
\begin{equation}\label{eqn:uBez}
    u(X) \frac{m(X)-m(Y)}{X-Y} = \sum_{i=0}^{n-1} u(X) X^i \Hor_{n-1-i}(m,Y).
\end{equation}
Since $u(X)X^i = \sum_j s \left( u(X) X^{i+j} \right)\Hor_{n-1-j}(m,X)$ by \autoref{cor:horner-basis}, we can rewrite \autoref{eqn:uBez} as
\begin{align*}
    \sum_{i,j=0}^{n-1} s \left( u(X) X^{i+j} \right) \Hor_{n-1-i}(m,X) \Hor_{n-1-j}(m,Y).
\end{align*}
On the other hand, the coefficient matrix of $s_*\langle u(t)\rangle$ in the monomial basis is given by $\big(s(u(t)t^{i+j})\big)_{i,j=0}^{n-1}$. Since $k(t)=k[X]/m(X)$, we have $s(u(X)X^{i+j})=s(u(t)t^{i+j})\in k$, as desired.
\end{proof}

\section{Lifts of univariate maps and transfers of local degrees}\label{sec:lifts}
Given a map $f:\mb{A}^n_k\to\mb{A}^n_k$ with a non-rational isolated zero $p$, we would like to compute the local degree $\deg_p(f)\in\GW(k)$ by lifting $f$ to a map $\til{f}:\mb{A}^n_{k(p)}\to\mb{A}^n_{k(p)}$ with rational isolated zero $\til{p}$, computing $\deg_{\til{p}}(\til{f})\in\GW(k(p))$, and applying the appropriate transfer $\GW(k(p))\to\GW(k)$. If $k(p)/k$ is a finite, separable extension, one may take $\til{f}$ to be the base change $f_{k(p)}$ \cite{trace-paper}. However, if $k(p)/k$ is finite and purely inseparable, lifting $f$ to $f_{k(p)}$ yields a local degree whose rank is too large, as illustrated in \autoref{ex:rank too big}.

\begin{example}\label{ex:rank too big}
Let $k=\mb{F}_p(t)$ for some prime $p>2$, and let $f:\mb{A}^1_k\to\mb{A}^1_k$ be given by $f(x)=(x^p-t)^d$, where $d\geq 1$ is an integer. Take $q\in\mb{A}^1_k$ to be the non-rational point defined by the ideal $(x^p-t)\subset\mb{F}_p(t)[x]$, and note that $k(q)=\mb{F}_p(t^{1/p})$. Let $\til{q}=(x-t^{1/p})$ be the $k(q)$-rational lift of $q$. By \cite[p. 182]{SS75} (and e.g. \cite[Theorem 5.1]{BMP21}), we have
\begin{align*}
    \rank(\deg_q(f))&=\dim_k\frac{k[x]_q}{(f)},\\
    \rank(\deg_{\til{q}}(f_{k(q)}))&=\dim_{k(q)}\frac{k(q)[x]_{\til{q}}}{(f_{k(q)})}.
\end{align*}
Since $f$ is a polynomial of degree $pd$ lying in the maximal ideal $(x^p-t)$, we observe that $\dim_k k[x]_q/(f)=pd$. The freshman's dream implies $f_{k(q)}=(x-t^{1/p})^{pd}$, so it follows that $\dim_{k(q)}k(q)[x]_{\til{q}}/(f_{k(q)})=pd$ as well. Applying the geometric (equivalently, Scharlau) transfer $\tau^{k(q)}_{k}(t^{1/p})=s_*:\GW(k(q))\to\GW(k)$ scales rank by $[k(q):k]$, so
\[\rank(s_*\deg_{\til{q}}(f_{k(q)}))>\rank(\deg_q(f)).\]
This too-high rank issue arises from the splitting of the minimal polynomial $m(x)$ of $q$. Any morphism $f:\A^1_k\to\A^1_k$ vanishing at $q$ must be a multiple of $m$. If $k(q)/k$ is purely inseparable, then all linear factors of $m_{k(q)}$ are contained in the ideal $\til{q}$ and are hence not invertible in $k(q)[x]_{\til{q}}$. This stands in contrast with the separable case, where all but one linear factor of $m_{k(q)}$ are not contained in $\til{q}$ and are hence invertible in the relevant local ring. The invertibility of these factors of $m_{k(p)}$ causes the desired drop in dimension when constructing the quotient ring $k(q)[x]_{\til{q}}/(f_{k(q)})$.
\end{example}

As motivated by \autoref{prop:untwisting}, we would like to look for a suitable lift of $f$.

\begin{notation}\label{notn:lift section}
Throughout \autoref{sec:lifts}, let $p \in \A^1_k$ be a closed point with corresponding minimal polynomial $m(x)\in k[x]$. Since $\A^1_k=\Spec{k[x]}$, the residue field $L:=k(p)$ is a finite simple extension of $k$. Let $t$ be a primitive element of $L/k$. The canonical point $\til{p} \in \A^1_L$ is the point corresponding to the ideal $(x-t)\subset L[x]$. We fix $f(x) \in k[x]$ to be a polynomial vanishing at $p$, written uniquely as $f(x) = u(x)m(x)^d$, where $u(x)$ is not contained in the ideal corresponding to $p$ (that is, $u$ is non-vanishing at $p$).
\end{notation}

\subsection{Geometric lifts of univariate polynomials}
We now describe how to lift univariate polynomials relative to geometric and cohomological transfers. We begin with geometric lifts.

\begin{definition} 
Let $f(x) = u(x) m(x)^d$ and $p$ be as in \autoref{notn:lift section}. The \textit{geometric lift} of $f$ at the point $p$ is the polynomial $$\fg(x) := u(x) (x-t)^d\in L[x].$$
\end{definition}

Now that we have defined the geometric lift of $f$ at $p$, we can compute its local $\A^1$-degree.

\begin{lemma}\label{lem:local-deg-geom-lift} 
Let $f(x) = u(x) m(x)^d$ and $p$ be as in \autoref{notn:lift section}. Then, as elements of $\GW(L)$, we have
\begin{align*}
    \deg_{\til{p}}^{\A^1}(\fg) &=  \begin{cases}\frac{d}{2}\mathbb{H} & d\text{ is even} \\ \left\langle u(t) \right\rangle + \frac{d-1}{2}\mathbb{H} & d\text{ is odd}. \end{cases}
\end{align*}
\end{lemma}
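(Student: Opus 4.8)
The plan is to compute $\deg_{\til{p}}^{\A^1}(\fg)$ directly via its B\'ezoutian, exploiting the fact that $\til{p}$ corresponds to the $L$-rational ideal $(x-t)$. Since $\fg(x) = u(x)(x-t)^d$ and $u(t) \in L^\times$, the point $\til{p}$ is an isolated zero of $\fg$, and by \autoref{prop:local-degrees-are-global-degrees} (applied over the base field $L$, with minimal polynomial $x-t$) we have $\deg_{\til{p}}^{\A^1}(\fg) = \deg^{\A^1}(\P^1_L \xto{(x-t)^d/u} \P^1_L)$. So it suffices to compute the B\'ezoutian bilinear form of $(x-t)^d/u$ over $L$ in a convenient basis of $L[x]/\big((x-t)^d\big)$.

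First I would change coordinates by setting $y = x - t$, which is an $\A^1$-isomorphism fixing the relevant combinatorics of the B\'ezoutian; this reduces the problem to computing the local degree at the origin of $y \mapsto v(y) y^d$, where $v(y) := u(y+t)$ satisfies $v(0) = u(t) \in L^\times$. Then I would write down the B\'ezoutian $\Bez(v \cdot y^d) = \frac{v(Y)X^d - v(X)Y^d}{X - Y} + \frac{v(X)-v(Y)}{X-Y}(X^d + Y^d)$ and reduce modulo $(X^d, Y^d)$ exactly as in the proof of \autoref{prop:local-degrees-are-global-degrees}, leaving the class of $\frac{v(Y)X^d - v(X)Y^d}{X-Y} \bmod (X^d, Y^d)$. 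Expanding $v(Y) = v(0) + Y(\cdots)$ and using that $X^d \equiv 0$, one sees the surviving terms assemble into an upper triangular Hankel matrix in the monomial basis $\{1, X, \ldots, X^{d-1}\}$ whose anti-diagonal entries are constant and whose bottom-left (main anti-diagonal) entry is $v(0) = u(t)$ (up to sign, which I would track carefully).

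With the matrix in upper triangular Hankel form and bottom-left entry $u(t)$, the result follows immediately from \autoref{prop:upper-Hankel-form}: the class is $\frac{d}{2}\mathbb{H}$ when $d$ is even and $\frac{d-1}{2}\mathbb{H} + \langle u(t)\rangle$ when $d$ is odd. An alternative (and perhaps cleaner) route, which I would mention, is to invoke \autoref{lem:block-hankel-diag} with $n = 1$ directly, once the Gram matrix of $\deg_{\til{p}}^{\A^1}(\fg)$ is recognized as a $d \times d$ upper triangular Hankel matrix with anti-diagonal entry $u(t)$; with $n=1$ the block Hankel lemma degenerates to precisely \autoref{prop:upper-Hankel-form}. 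I expect the main obstacle to be bookkeeping: verifying that the relevant Gram matrix is genuinely \emph{upper triangular} Hankel (i.e.\ that the entries strictly below the main anti-diagonal vanish after reduction mod $(X^d, Y^d)$) and pinning down the sign on the anti-diagonal entry, since a stray $(-1)$ would change $\langle u(t)\rangle$ to $\langle -u(t)\rangle$. A quick sanity check with $d = 1$ (where $\fg = u$, nonvanishing at $\til p$, so $\deg_{\til p}^{\A^1}(\fg) = \langle u(t)\rangle$) fixes the sign convention.
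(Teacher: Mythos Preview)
Your approach is essentially the same as the paper's: compute the local B\'ezoutian of $\fg$ at $\til{p}$, recognize the Gram matrix as an (upper) triangular Hankel matrix with anti-diagonal entry $u(t)$, and invoke \autoref{prop:upper-Hankel-form}. The paper does this directly in the basis $\{(x-t)^{d-1},\ldots,(x-t),1\}$ using a Hasse--Taylor expansion of $u(X)$, whereas your substitution $y=x-t$ and $v(y)=u(y+t)$ is just that same change of basis in disguise; the invocation of \autoref{prop:local-degrees-are-global-degrees} is harmless but unnecessary, since you end up recomputing the local B\'ezoutian anyway.

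Two small bookkeeping slips to watch: in the increasing monomial basis $\{1,X,\ldots,X^{d-1}\}$ the matrix you obtain is \emph{lower} triangular Hankel (zeros above the anti-diagonal), so to match \autoref{prop:upper-Hankel-form} exactly you should reverse the basis order, as the paper does; and in your $d=1$ sanity check, $\fg(x)=u(x)(x-t)$ does vanish at $\til p$ --- the correct justification is $\deg_{\til p}^{\A^1}(\fg)=\langle \fg'(t)\rangle=\langle u(t)\rangle$.
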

\begin{proof} 
The B\'ezoutian of $\fg$ at $\til{p}$ will be an element of the algebra
\begin{align*}
    \frac{L[X]_{(X-t)}}{(u(X)(X-t)^d)} \otimes \frac{L[Y]_{(Y-t)}}{(u(Y)(Y-t)^d)} &\cong \frac{L[X]_{(X-t)}}{((X-t)^d)} \otimes \frac{L[Y]_{(Y-t)}}{((Y-t)^d)}.
\end{align*}
We expand the B\'{e}zoutian as
\begin{align*}
    \Bez(\fg) &= \frac{u(X) (X-t)^d - u(Y)(Y-t)^d}{X-Y}\\
    &=\frac{u(X)(X-t)^d-u(Y)(Y-t)^d}{X-Y}+\frac{u(X)(Y-t)^d-u(X)(Y-t)^d}{X-Y}\\
    &=u(X) \frac{(X-t)^d - (Y-t)^d}{(X-t)-(Y-t)}+\frac{u(X)-u(Y)}{X-Y}(Y-t)^d\\
    &\equiv u(X) \frac{(X-t)^d - (Y-t)^d}{(X-t)-(Y-t)}\bmod{((X-t)^d,(Y-t)^d)} \\
    &= u(X) \left( \sum_{i=0}^{d-1} (X-t)^i (Y-t)^{d-1-i} \right).
\end{align*}
Our next goal is to write $\Bez(\fg)$ with respect to the basis $\{(x-t)^{d-1},(x-t)^{d-2},\ldots,(x-t),1\}$ of $L[x]_{(x-t)}/((x-t)^d)$. In order to do so, we must expand $u(x)\mod (x-t)^d$ in this basis. This is done using a truncated Taylor series expansion. Let $u^{(i)}$ denote the $i\textsuperscript{th}$ Hasse derivative of $u(x)$. Then $\sum_{i=0}^{d-1}u^{(i)}(t)(x-t)^i\equiv u(x)\bmod{(x-t)^d}$, so
\begin{align*}
    \Bez(\fg) &= \left( \sum_{i=0}^{d-1} u^{(i)}(t)(X-t)^i \right)\left( \sum_{j=0}^{d-1} (X-t)^j (Y-t)^{d-1-j} \right).
\end{align*}
It follows that the B\'ezoutian bilinear form of $\fg$ with respect to the basis $\{(x-t)^{d-i}\}_{i=1}^d$ is
\begin{equation}\label{eqn:Bez-ftil-monomial}
\begin{aligned}
    \begin{tabular}{C | C C C C C }
& (X-t)^{d-1} & (X-t)^{d-2} & \cdots & (X-t) & 1 \\
\hline
(Y-t)^{d-1} & u^{(d-1)}(t) & u^{(d-2)}(t) & \cdots  & u^{(1)}(t) & u(t) \\
(Y-t)^{d-2} & u^{(d-2)}(t) & u^{(d-3)}(t) & \cdots  & u(t) & 0\\
\vdots & \vdots & \vdots & \reflectbox{$\ddots$} & \vdots & \vdots \\
(Y-t) & u^{(1)}(t) & u(t) & \cdots & 0 & 0\\
1 & u(t)& 0 & \cdots & 0 & \phantom{.}0.\\
\end{tabular}
\end{aligned}
\end{equation}
Since $u(x)$ is not an element of the maximal ideal $m(x)\cdot k[x]$, it cannot be an element of the maximal ideal $(x-t)\cdot L[x]$. In particular, $u(t)\neq 0$, so the result follows from \autoref{prop:upper-Hankel-form}.
\end{proof}

\autoref{cor:bound on non-hyperbolic} now follows from \autoref{lem:local-deg-geom-lift}.

\begin{proof}[Proof of \autoref{cor:bound on non-hyperbolic}]
Apply \autoref{lem:scharlau-trace-gram-matx} to \autoref{eqn:Bez-ftil-monomial}. Conclude with \autoref{lem:block-hankel-diag} to block diagonalize the bilinear form.
\end{proof}

Since we have computed $\deg^{\A^1}_{\til{p}}(\fg)$, we can compare its geometric transfer to $\deg^{\A^1}_p(f)$.

\begin{lemma}\label{lem:geometric-lift}
The geometric lift is compatible with the local degree and geometric transfer. That is, $\tau_k^{k(p)}(t) \left(\deg_{\til{p}}^{\A^1}(\fg)\right)=\deg_p^{\A^1}(f)$ in $\GW(k)$.
\end{lemma}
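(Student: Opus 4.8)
The plan is to compute both sides using Bézoutians and compare the resulting bilinear forms. On the left-hand side, I would start with the local degree $\deg_p^{\A^1}(f)$. By \autoref{prop:local-degrees-are-global-degrees} and the theory of \cite{BMP21,Cazanave}, this is the class of the Bézoutian bilinear form of $m^d/u$, computed in the algebra $k[x]/(m(x)^d)$. The key observation to make here is that the Bézoutian $\Bez(f) \equiv \frac{u(Y)m(X)^d - u(X)m(Y)^d}{X-Y} \bmod (m(X)^d, m(Y)^d)$ can be massaged (as in the proof of \autoref{prop:local-degrees-are-global-degrees}) into a form built out of $u(X)\cdot\frac{m(X)^d - m(Y)^d}{X-Y}$, and then further, using a truncated Taylor/Hasse expansion of $u$ modulo $m^d$, into a block Hankel form whose entries are governed by $u(t)$ and the Horner polynomials of $m$.

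On the right-hand side, I would use \autoref{lem:local-deg-geom-lift}: we already know $\deg_{\til{p}}^{\A^1}(\fg)$ and, more importantly, we have the explicit Gram matrix in \autoref{eqn:Bez-ftil-monomial} with respect to the basis $\{(x-t)^{d-i}\}_{i=1}^d$, whose entries are the Hasse derivatives $u^{(i)}(t)$. Now I apply the geometric transfer, which by \autoref{lem:geometric is scharlau} equals $s_*$ for the Scharlau form $s: L \to k$ associated to $t$. By \autoref{lem:scharlau-trace-gram-matx}, $s_*$ applied to the form in \autoref{eqn:Bez-ftil-monomial} produces a block matrix whose $(i,j)$ block is the Gram matrix of $s_*\langle u^{(i+j-\ast)}(t)\rangle$ in the monomial basis $B_{L/k}$; that is, we get a block upper-triangular block Hankel matrix, with each block itself a Hankel matrix (by \autoref{prop:multiply-by-bez-to-get-geom-tr}, since $s(u^{(\ell)}(t)\,t^{a}) = s(u^{(\ell)}(X)X^a)$ is exactly the coefficient of the Bézoutian $u^{(\ell)}(X)\frac{m(X)-m(Y)}{X-Y}$ in the Horner basis). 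So both sides, after choosing the right bases, are block Hankel forms of the same shape.

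The crux is then to identify these two block Hankel forms — or at least to show they have the same class in $\GW(k)$. The cleanest route is to invoke \autoref{lem:block-hankel-diag}: both forms are non-degenerate block Hankel matrices that are block upper triangular with $n\times n$ Hankel blocks ($n = [L:k]$), so each has class $\frac{nd}{2}\mathbb{H}$ when $d$ is even, and $\frac{n(d-1)}{2}\mathbb{H} + \hat{A}_d$ when $d$ is odd, where $A_d$ is the anti-diagonal block. It therefore suffices to check that the anti-diagonal blocks agree in $\GW(k)$. For $d$ even there is nothing to check; for $d$ odd, the anti-diagonal block of $\deg_{\til p}^{\A^1}(\fg)$ is $\langle u(t)\rangle$ (the $(1,1)$-corner entry $u(t)$ of \autoref{eqn:Bez-ftil-monomial} sits on the main anti-diagonal only when paired appropriately — more precisely the relevant block is $\langle u^{(0)}(t)\rangle = \langle u(t)\rangle$), so after transfer its class is $s_*\langle u(t)\rangle = \tau_k^{k(p)}(t)\langle u(t)\rangle$; and on the $f$ side, running the same Hasse-derivative expansion shows the anti-diagonal block is likewise the Bézoutian form of $u(X)\frac{m(X)-m(Y)}{X-Y}$, which by \autoref{prop:multiply-by-bez-to-get-geom-tr} is exactly $s_*\langle u(t)\rangle$ in the monomial basis. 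Hence both classes coincide.

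I expect the main obstacle to be bookkeeping: carefully tracking how the truncated Taylor expansion of $u$ modulo $m(x)^d$ interacts with the "geometric" truncation modulo $(x-t)^d$ after base change, and making sure the block structures really line up index-for-index so that \autoref{lem:block-hankel-diag} applies verbatim to both sides. A subtlety worth flagging is the non-degeneracy hypothesis in \autoref{lem:block-hankel-diag}: one must verify that both block Hankel forms are non-degenerate, which follows since $u(t) \ne 0$ (equivalently $u \notin (m)$) makes the corner blocks, hence the whole anti-diagonal, invertible. Once the shapes are matched, the comparison reduces — as above — to the single identity $\tau_k^{k(p)}(t)\langle u(t)\rangle = s_*\langle u(t)\rangle$ supplied by \autoref{prop:multiply-by-bez-to-get-geom-tr} and \autoref{lem:geometric is scharlau}, and the proof concludes.
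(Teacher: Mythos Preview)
Your proposal is correct and follows essentially the same route as the paper: compute $\Bez(f)$ modulo $(m(X)^d,m(Y)^d)$, exhibit it as a block upper-triangular Hankel form, apply \autoref{lem:block-hankel-diag} to reduce to the anti-diagonal block, and identify that block with $s_*\langle u(t)\rangle$ via \autoref{prop:multiply-by-bez-to-get-geom-tr}; on the other side, apply \autoref{lem:scharlau-trace-gram-matx} to \autoref{eqn:Bez-ftil-monomial} and match.

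One small correction: on the $f$ side you speak of a ``truncated Taylor/Hasse expansion of $u$ modulo $m^d$,'' but there is no Hasse expansion of $u$ in powers of $m$ in general. What the paper actually does (and what you want) is the purely algebraic factorization
\[
u(X)\,\frac{m(X)^d-m(Y)^d}{X-Y}
= u(X)\,\frac{m(X)-m(Y)}{X-Y}\cdot\sum_{i=0}^{d-1} m(X)^i m(Y)^{d-1-i},
\]
together with the basis $\{\Hor_j(m,x)\,m(x)^{d-1-i}\}_{i,j}$ of $k[x]_{(m)}/(m^d)$; this is what makes the block structure visible and puts the coefficients of $u(X)\frac{m(X)-m(Y)}{X-Y}\bmod(m(X),m(Y))$ on the main anti-diagonal. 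Your invocation of Horner polynomials and \autoref{prop:multiply-by-bez-to-get-geom-tr} shows you already have this in mind, so the fix is purely expository.
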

\begin{proof} 
Using the same idea as in the proof of \autoref{lem:local-deg-geom-lift}, we have
\begin{align*}
    \Bez(f) &= \frac{u(X) m(X)^d - u(Y)m(Y)^d}{X-Y} \\
    &\equiv u(X) \frac{m(X)^d - m(Y)^d}{X-Y}\bmod{(m(X)^d,m(Y)^d)}\\
    & = u(X) \frac{m(X)^d - m(Y)^d}{m(X) - m(Y)} \cdot\frac{m(X) - m(Y)}{X-Y}.
\end{align*}
For $0\leq j<n$, let $H_j(x):=\Hor_j(m,x)$ be the $j\textsuperscript{th}$ Horner polynomial associated to $m(x)$ (as defined in \autoref{defn:Horner basis}), and let 
\[\mc{B}_i(x)=\{H_{n-1}(x)m(x)^{d-1-i},H_{n-2}(x)m(x)^{d-1-i},\ldots,H_{0}(x)m(x)^{d-1-i}\}.\]
Note that $\mc{B}(x):=\bigcup_{i=0}^{ d-1}\mc{B}_i(x)$ is a $k$-basis of $k[x]_{(m)}/(f)\cong k[x]_{(m)}/(m^d)$, since all elements of this set have distinct polynomial degree. Collecting powers of $m(X)$ and $m(Y)$, we have
\begin{align*}
    \Bez(f) &\equiv u(X) \frac{m(X) - m(Y)}{X-Y} \left( \sum_{i=0}^{d-1} m(X)^i m(Y)^{d-1-i} \right)\bmod{(m(X)^d,m(Y)^d)}.
\end{align*}
In this expansion, each summand of $\Bez(f)$ is divisible by $m(X)^im(Y)^{d-1-i}$. In particular, in the basis $\mc{B}(X)\times\mc{B}(Y)$, the matrix of coefficients of $\Bez(f)$ is block upper left triangular, where the $(i,j)\textsuperscript{th}$ block corresponds to the coefficients of the basis elements $\mc{B}_i(X)\times\mc{B}_j(Y)$. By \autoref{lem:block-hankel-diag}, it suffices to compare the blocks of the coefficient matrix of $\Bez(f)$ along the main anti-diagonal to those appearing in $\tau^L_k(t)( \deg_{\til{p}}^{\A^1}(\fg))$. The blocks appearing along this diagonal consists of the coefficients of $u(X) \frac{m(X) - m(Y)}{X-Y}\bmod{(m(X),m(Y))}$ expanded in the Horner basis $\{H_{n-1}(X),\ldots,H_0(X)\}\times\{H_{n-1}(Y),\ldots,H_{0}(Y)\}$, because the coefficients of any terms of $u(X)\frac{m(X)-m(Y)}{X-Y}$ that are divisible by $m(X)$ or $m(Y)$ will be shifted to blocks above the main anti-diagonal. This is exactly the Gram matrix of $\tau_k^{L}(t) \langle u(t) \rangle$ (see \autoref{eqn:Bez-ftil-monomial}) by \autoref{prop:multiply-by-bez-to-get-geom-tr}. The desired result now follows from \autoref{lem:local-deg-geom-lift}.
\end{proof}

\begin{remark}[\textit{Unstable degree}]\label{rem:unstable}
We expect that \autoref{lem:geometric-lift} holds \textit{unstably}. While Morel's $\mb{A}^1$-degree homomorphism
\[
\deg^{\A^1}:[(\mb{P}^1_k)^{\wedge n},(\mb{P}^1_k)^{\wedge n}]_{\mc{H}_\bullet(k)}\to\GW(k)
\]
is an isomorphism for $n\geq 2$, this map is only an epimorphism for $n=1$ \cite{Morel}. Building on the work of Morel \cite[p. 1037]{Morel-ICM}, Cazanave showed that
\[(\deg^{\A^1},\det\Bez):[\mb{P}^1_k,\mb{P}^1_k]_{\mc{H}_\bullet(k)}\to\GW(k)\times_{k^\times/k^{\times 2}}k^\times\]
is an isomorphism \cite{Cazanave}, where $\Bez(f)$ is the B\'ezoutian bilinear form of the rational map $f$. Moreover, the $\mb{A}^1$-degree of $f$ is the isomorphism class of $\Bez(f)$, so $(\Bez,\det\Bez)$ can be regarded as the \textit{unstable $\mb{A}^1$-degree}.

In the proof of \autoref{lem:geometric-lift}, we showed that $\Bez_p(f)$ and $\tau_k^{k(p)}(t) \left(\Bez_{t}(\fg) \right)$ represent the same class in $\GW(k)$. However, we also showed that $\det\Bez_p(f)=(\det\Bez_{t}(\fg))^{[k(p):k]}$. Thus if the geometric transfer $\tau_k^{k(p)}(t):\GW(k(p))\to\GW(k)$ can be extended to an ``unstable transfer''
\[\left(\tau_k^{k(p)}(t),\phi\right):\GW(k(p))\times_{k(p)^\times/k(p)^{\times 2}}k(p)^\times\to\GW(k)\times_{k^\times/k^{\times 2}}k^\times\]
such that $\phi(a)=a^{[k(p):k]}$ for any $a\in k^\times$, then the geometric lift will be compatible with the unstable local degree and unstable transfer:
\[
    \left(\tau_k^{k(p)}(t)(\deg_{\til{p}}^{\A^1}(\fg)),\phi(\det\Bez_{\til{p}}(\fg))\right)=(\deg_p^{\A^1}(f),\det\Bez_p(f)).
\]
\end{remark}

\subsection{Cohomological lifts of univariate polynomials}
As discussed earlier, geometric transfers do not behave well with respect to composite field extensions. One can rectify this issue by twisting geometric transfers, which leads to the notion of cohomological transfers. In \autoref{lem:geometric-lift}, we saw that the geometric transfer of the local $\A^1$-degree at $\til{p}$ of the geometric lift of $f$ is the local degree of $f$ at $p$. Analogously, we will define the \textit{cohomological lift} of $f$ by twisting the geometric lift. We will also prove that the cohomological lift is compatible with the cohomological transfer.

\begin{definition}\label{def:cohom-lift} 
Let $f(x) = u(x) m(x)^d$ and $p$ be as in \autoref{notn:lift section}. The \textit{cohomological lift} of $f$ at $p$ is the polynomial $$\fc(x) := \omega_0(x)^d u(x)(x-t)^d\in L[x],$$ where $\omega_0(x)$ is the polynomial associated to the extension $L/k$ defined in \autoref{nota:omega}.
\end{definition}

\begin{corollary}\label{cor:coh-tr} 
The cohomological lift is compatible with the local $\A^1$-degree and cohomological transfer. That is, $\Tr_k^{k(p)} \deg_{\til{p}}^{\A^1}(\fc) = \deg_p^{\A^1}(f)$.
\end{corollary}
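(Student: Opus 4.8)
The plan is to observe that the cohomological lift $\fc$ is itself a geometric lift up to relabeling, run it through \autoref{lem:local-deg-geom-lift}, and then reconcile the cohomological twist with the geometric transfer via \autoref{lem:geometric-lift}. Concretely, I would set $v(x):=\omega_0(x)^d u(x)\in L[x]$, so that $\fc(x)=v(x)(x-t)^d$. Since $m_0(x)$ is separable we have $\omega_0(t)\in L^\times$ (see \autoref{nota:omega}) and $u(t)\neq 0$ by \autoref{notn:lift section}, hence $v(t)=\omega_0(t)^d u(t)\neq 0$; thus $\fc$ has exactly the shape of a geometric lift with $u$ replaced by $v$. The computation in the proof of \autoref{lem:local-deg-geom-lift} then applies verbatim (replacing $u^{(i)}(t)$ by $v^{(i)}(t)$ throughout \autoref{eqn:Bez-ftil-monomial}), and \autoref{prop:upper-Hankel-form} yields
\[
    \deg_{\til p}^{\A^1}(\fc)=\begin{cases}\frac{d}{2}\mathbb{H} & d\text{ even},\\ \langle\omega_0(t)^d u(t)\rangle+\frac{d-1}{2}\mathbb{H} & d\text{ odd}.\end{cases}
\]

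Next I would unwind the cohomological transfer. By \autoref{def:cohomological-transfer}, $\Tr_k^{k(p)}=\tau_k^{k(p)}(t)\circ\langle\omega_0(t)\rangle$, so I only need to compute $\langle\omega_0(t)\rangle\cdot\deg_{\til p}^{\A^1}(\fc)$ in $\GW(L)$. For $d$ even this equals $\frac{d}{2}\mathbb{H}$, since scaling a hyperbolic form by a unit changes nothing. For $d$ odd, $\langle\omega_0(t)\rangle\cdot\langle\omega_0(t)^d u(t)\rangle=\langle\omega_0(t)^{d+1}u(t)\rangle=\langle u(t)\rangle$, because $d+1$ is even, so $\omega_0(t)^{d+1}$ is a square in $L^\times$. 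Comparing with \autoref{lem:local-deg-geom-lift}, in both parities I would conclude
\[
    \langle\omega_0(t)\rangle\cdot\deg_{\til p}^{\A^1}(\fc)=\deg_{\til p}^{\A^1}(\fg)\quad\text{in }\GW(L).
\]

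Finally, applying the group homomorphism $\tau_k^{k(p)}(t)$ to this identity, invoking \autoref{def:cohomological-transfer} on the left and \autoref{lem:geometric-lift} on the right, gives
\[
    \Tr_k^{k(p)}\deg_{\til p}^{\A^1}(\fc)=\tau_k^{k(p)}(t)\left(\langle\omega_0(t)\rangle\deg_{\til p}^{\A^1}(\fc)\right)=\tau_k^{k(p)}(t)\deg_{\til p}^{\A^1}(\fg)=\deg_p^{\A^1}(f),
\]
completing the proof (and thereby finishing the proof of \autoref{thm:main}). I do not expect a genuine obstacle here; the only delicate point is the odd-$d$ bookkeeping, where the twist $\langle\omega_0(t)\rangle$ introduced by passing from the geometric to the cohomological transfer must combine with the twist $\omega_0(x)^d$ built into $\fc$ to produce a perfect square, so that the two twists cancel precisely. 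An alternative that avoids the parity split would be to track directly how multiplication by the unit-valued polynomial $\omega_0(x)^d$ deforms the Hankel form \autoref{eqn:Bez-ftil-monomial}, but routing through \autoref{lem:local-deg-geom-lift} as above is cleaner.
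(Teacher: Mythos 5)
Your proof is correct and matches the paper's argument essentially step for step: apply \autoref{lem:local-deg-geom-lift} to $\fc$ viewed as a geometric lift with $u$ replaced by $\omega_0^d u$, cancel the parity-dependent power of $\omega_0(t)$ against a square in $L^\times$, and then chain \autoref{def:cohomological-transfer} with \autoref{lem:geometric-lift}. The paper phrases the middle step as $\deg_{\til{p}}^{\A^1}(\fc)=\langle\omega_0(t)\rangle\deg_{\til{p}}^{\A^1}(\fg)$ (so that applying the cohomological transfer introduces $\langle\omega_0(t)^2\rangle=\langle 1\rangle$), whereas you show $\langle\omega_0(t)\rangle\deg_{\til{p}}^{\A^1}(\fc)=\deg_{\til{p}}^{\A^1}(\fg)$ directly; these are trivially equivalent since $\langle\omega_0(t)\rangle$ squares to $\langle 1\rangle$. (As a side note, your odd-$d$ formula correctly reads $\frac{d-1}{2}\mathbb{H}$, whereas the paper's displayed proof has a typo writing $\frac{d}{2}\mathbb{H}$.)
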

\begin{proof} 
Since $m_0(x)$ is a separable polynomial, $\omega_0(x)$ is non-vanishing at $t$. \autoref{lem:local-deg-geom-lift} thus implies that 
\begin{align*}
    \deg_{\til{p}}^{\A^1} (\fc) &= \begin{cases}\frac{d}{2}\mathbb{H} & d\text{ is even} \\ \left\langle \omega_0(t)^d u(t) \right\rangle + \frac{d}{2}\mathbb{H} & d\text{ is odd} \end{cases} \\
    &= \begin{cases}\frac{d}{2}\mathbb{H} & d\text{ is even} \\ \left\langle \omega_0(t) u(t) \right\rangle + \frac{d}{2}\mathbb{H} & d\text{ is odd} \end{cases} \\
    &=\left\langle \omega_0(t) \right\rangle \deg_{\til{p}}^{\A^1}(\fg).
\end{align*}
The result now follows from \autoref{def:cohomological-transfer} and \autoref{lem:geometric-lift}.
\end{proof}

\begin{proposition}\label{prop:separable-cohom-lift-is-base-change} 
Assume that $k(p)/k$ is separable. Then the cohomological lift of $f$ at $p$ is the base change $f_{k(p)}$.
\end{proposition}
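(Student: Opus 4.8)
The plan is to unwind \autoref{def:cohom-lift} and substitute the explicit description of $\omega_0(x)$ that is available in the separable case. First I would record that, since $k(p)/k$ is separable, one has $L_{\text{sep}} = L$ in the notation preceding \autoref{nota:omega}, hence $i = 0$ and $m_0(x) = m(x)$, so that
\[
\omega_0(x) = \frac{m_0(x)}{x - t^{p^i}} = \frac{m(x)}{x - t}
\]
as an element of $L[x]$. This is exactly the content of \autoref{ex:omega-separable}. Here $m(x) \in k[x]$ is the minimal polynomial of the primitive element $t$ of $L/k$ and $f(x) = u(x) m(x)^d$, both as fixed in \autoref{notn:lift section}.

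Next I would simply compute. Plugging the above into the definition of the cohomological lift,
\[
\fc(x) = \omega_0(x)^d\, u(x)\, (x-t)^d = \left( \frac{m(x)}{x-t} \right)^{\!d} u(x)\, (x-t)^d = u(x)\, m(x)^d = f(x),
\]
where the identity $\omega_0(x)(x-t) = m(x)$ is taken in $L[x]$, which is legitimate since $t \in L$. Since the base change $f_{k(p)}$ is by definition the image of $f \in k[x]$ under the inclusion $k[x] \hookrightarrow L[x]$, this exhibits $\fc = f_{k(p)}$ as elements of $L[x]$, and hence as endomorphisms of $\A^1_{k(p)}$.

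I do not expect any genuine obstacle here: the content of the statement is precisely that, in the separable case, $\omega_0(x)^d$ is the cofactor needed to turn the geometric lift $\fg(x) = u(x)(x-t)^d$ back into $u(x) m(x)^d$, i.e.\ to re-glue the Galois conjugates of $t$ that the geometric lift discarded. The only care needed is bookkeeping about which polynomial ring one works in. As a consistency check, combining this with \autoref{cor:coh-tr} recovers the univariate case of the main result of \cite{trace-paper}, since for separable $k(p)/k$ the cohomological transfer $\Tr_k^{k(p)}$ is the transfer induced by the field trace.
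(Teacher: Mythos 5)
Your proof is correct and takes essentially the same approach as the paper: both rest on the single observation that in the separable case $\omega_0(x)(x-t)=m_0(x)=m(x)$ in $L[x]$, after which the identity $\fc = f_{k(p)}$ is a one-line computation. Your version simply spells out the bookkeeping ($i=0$, $m_0=m$, and the substitution into \autoref{def:cohom-lift}) a bit more explicitly.
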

\begin{proof} This follows from the observation that $\omega_0(x)(x-t) = m_0(x) = m(x)$ in this setting.
\end{proof}

For finite separable extensions, the cohomological transfer is equal to the field trace on Grothendieck--Witt groups \cite[Lemma 2.3]{Calmes-Fasel}. By \autoref{prop:separable-cohom-lift-is-base-change}, we have that \autoref{cor:coh-tr} recovers the main result of \cite{trace-paper} for univariate maps.

\begin{example} The cohomological lift and geometric lift of a polynomial agree at a point with purely inseparable residue field by \autoref{ex:omega-purely-inseparable}.
\end{example}

\begin{example} Consider the polynomial $f(x) = (x+2)(x-2)(x^2+1)^3 \in \R[x]$, vanishing at $(x^2+1)$. We have that the geometric lift of $f$ is
\begin{align*}
    \fg = (x+2)(x-2)(x-i)^3,
\end{align*}
while $\omega_0(x) = (x+i)$, so that $\fc(x) = f_\C(x)$.
\end{example}

\subsection{Trace forms and Scharlau forms}
Given a finite separable extension $L/k$, the \textit{trace form} $(x,y)\mapsto\Tr_{L/k}(xy)$ is an important invariant of the extension; see \cite{Conner-Perlis} for a survey. Post-composition with the field trace induces a homomorphism $\GW(L) \to \GW(k)$, which coincides with the cohomological transfer.

\begin{proposition}\label{prop:cohom-is-field-trace-separably} \cite[Lemma 2.3]{Calmes-Fasel}
Let $L/k$ be a finite separable field extension. Then post-composition with the field trace $\Tr_{L/k}: L \to k$ induces the cohomological transfer
\begin{align*}
    \Tr_k^L : \GW(L) &\to \GW(k) \\
    \left[ V \times V \xto{\beta} L \right] &\mapsto \left[ V \times V \xto{\beta} L \xto{\Tr_{L/k}} k \right].
\end{align*}
\end{proposition}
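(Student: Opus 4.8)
The plan is to reduce the statement to a single identity comparing two $k$-linear functionals on $L$, and then feed in a classical trace formula. First I would unwind \autoref{def:cohomological-transfer} in the separable case. By \autoref{ex:omega-separable} we have $\omega_0(t)=m'(t)$, where $m\in k[x]$ is the minimal polynomial of the primitive element $t$, and by \autoref{lem:geometric is scharlau} the geometric transfer $\tau_k^L(t)$ is post-composition with the Scharlau form $s\colon L\to k$. Hence, for a symmetric bilinear form $\beta\colon V\times V\to L$ representing a class of $\GW(L)$, multiplying by $\langle\omega_0(t)\rangle$ and then applying $\tau_k^L(t)$ produces the $k$-bilinear form $(v,w)\mapsto s\bigl(m'(t)\,\beta(v,w)\bigr)$ on $V$, viewed as a $k$-vector space. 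On the other hand, the map in the statement sends $[\beta]$ to $(v,w)\mapsto\Tr_{L/k}(\beta(v,w))$. So the proposition is equivalent to the assertion that the two $k$-linear functionals $\Tr_{L/k}$ and $z\mapsto s(m'(t)z)$ agree on $L$: once that is known, the two $k$-bilinear forms are literally the same function $V\times V\to k$, hence have the same Gram matrix in every $k$-basis and represent the same class in $\GW(k)$. (That post-composition with $\Tr_{L/k}$ lands in $\GW(k)$ — i.e.\ preserves non-degeneracy — is automatic once it is identified with $\Tr_k^L$, or may be seen directly from non-degeneracy of the trace form in the separable case.)

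Next I would prove that $\Tr_{L/k}(w)=s(m'(t)\,w)$ for every $w\in L$. Since $m'(t)\in L^\times$ by separability, substituting $w=z/m'(t)$ shows this is equivalent to $\Tr_{L/k}\bigl(z/m'(t)\bigr)=s(z)$ for all $z\in L$, and by $k$-linearity it suffices to verify this on the monomial basis $z=t^j$ with $0\le j\le n-1$, where $n=[L:k]$. By the very definition of the Scharlau form, $s(t^j)=1$ if $j=n-1$ and $0$ otherwise, so the goal reduces to the classical Euler--Jacobi identity $\Tr_{L/k}(t^j/m'(t))=\delta_{j,n-1}$ for $0\le j\le n-1$.

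To establish this identity I would use Lagrange interpolation. Write $m(x)=\prod_{i=1}^{n}(x-t_i)$ over $\bar k$ with $t=t_1$; separability guarantees the $t_i$ are distinct, and they are exactly the images of $t$ under the $n$ distinct $k$-embeddings of $L$, so that $\Tr_{L/k}(g(t))=\sum_{i=1}^{n}g(t_i)$ and, since $m'\in k[x]$, also $\Tr_{L/k}\bigl(g(t)/m'(t)\bigr)=\sum_{i=1}^{n}g(t_i)/m'(t_i)$ for any $g\in k[x]$. For $\deg g\le n-1$, Lagrange interpolation at the nodes $t_1,\dots,t_n$ gives $g(x)=\sum_{i=1}^{n}g(t_i)\,\frac{m(x)}{(x-t_i)\,m'(t_i)}$; each $\frac{m(x)}{x-t_i}$ is monic of degree $n-1$, so comparing coefficients of $x^{n-1}$ yields $[x^{n-1}]\,g(x)=\sum_{i=1}^{n}g(t_i)/m'(t_i)=\Tr_{L/k}\bigl(g(t)/m'(t)\bigr)$. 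Taking $g(x)=x^j$ for $0\le j\le n-1$ gives $\Tr_{L/k}(t^j/m'(t))=[x^{n-1}]\,x^j=\delta_{j,n-1}=s(t^j)$, which closes the argument.

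The main obstacle here is really the bookkeeping in the first step: pinning down that the module-level operation ``multiply by $\langle\omega_0(t)\rangle$, then apply $\tau_k^L(t)=s_\ast$'' is precisely post-composition with the functional $z\mapsto s(m'(t)z)$, so that the problem collapses to an equality of functionals. After that, the arithmetic input (the Euler--Jacobi formula) is short and elementary, and — worth noting — the proof uses only the bare definition of the Scharlau form, not \autoref{prop:Bezoutian-of-m} or \autoref{prop:scharlau-horner}.
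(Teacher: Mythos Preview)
Your proof is correct. The paper does not actually supply a proof of this proposition; it simply cites \cite[Lemma 2.3]{Calmes-Fasel} and moves on. You have given a self-contained argument: unwinding \autoref{def:cohomological-transfer} with \autoref{ex:omega-separable} and \autoref{lem:geometric is scharlau} reduces the claim to the equality of $k$-linear functionals $\Tr_{L/k}(-)=s(m'(t)\cdot-)$ on $L$, which in turn is the classical Euler--Jacobi identity $\Tr_{L/k}(t^j/m'(t))=\delta_{j,n-1}$ for $0\le j\le n-1$; your Lagrange-interpolation verification of the latter is standard and correct. One small remark: the proposition is stated for an arbitrary finite separable extension, whereas \autoref{def:cohomological-transfer} is phrased for simple extensions, so you are implicitly invoking the primitive element theorem --- harmless, but worth a word if you want the write-up to be fully explicit.
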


Similarly, associated to each $a\in L^\times$ is the \textit{scaled trace form} $(x,y)\mapsto\Tr_{L/k}(axy)$. Since the field trace induces the cohomological transfer for finite separable extensions, (scaled) trace forms are of the form $\Tr^L_k\langle a\rangle$. 

\begin{definition}
Let $L/k$ be a finite separable extension with primitive element $t$. Recall that the geometric transfer is equal to the Scharlau transfer (\autoref{lem:geometric is scharlau}). In analogy with (scaled) trace forms, we define the \textit{(scaled) Scharlau form} associated to $a\in L^\times$ as $\tau_k^L(t) \left\langle a \right\rangle$.
\end{definition}

We will show that the isomorphism class of any (scaled) trace form or Scharlau form along a finite separable field extension $L/k$ is given by a local $\mb{A}^1$-degree. Paired with the main result of \cite{BMP21}, we obtain a straightforward computational formula for the isomorphism class of any scaled trace form or Scharlau form in the separable setting. We first recall a result that allows us to relate cohomological and geometric transfers in the separable setting.

\begin{proposition}\label{prop:hoyois} \cite[Lemma 5.8]{hoyois} Let $L/k$ be a finite separable extension with primitive element $t$. Let $m(x)\in k[x]$ be the minimal polynomial of $t$. Then for any $\beta \in \GW(L)$, we have $\Tr_k^{L} \left( \beta \right) = \tau_k^{L}(t) \left( \left\langle m'(t) \right\rangle\cdot \beta \right)$.
\end{proposition}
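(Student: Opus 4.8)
The plan is to unwind the definition of the cohomological transfer and then identify the twisting form $\langle\omega_0(t)\rangle$ with $\langle m'(t)\rangle$ in the separable setting. By \autoref{def:cohomological-transfer}, the cohomological transfer factors as the composite $\GW(L)\xrightarrow{\langle\omega_0(t)\rangle}\GW(L)\xrightarrow{\tau_k^L(t)}\GW(k)$, where $\omega_0(x)\in L[x]$ is the distinguished polynomial of \autoref{nota:omega}. Hence for every $\beta\in\GW(L)$ one has $\Tr_k^L(\beta)=\tau_k^L(t)\bigl(\langle\omega_0(t)\rangle\cdot\beta\bigr)$, and it suffices to show that $\langle\omega_0(t)\rangle=\langle m'(t)\rangle$ in $\GW(L)$; in fact I would prove the stronger equality $\omega_0(t)=m'(t)$ in $L^\times$.

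For this, recall that since $L/k$ is separable, the exponential-characteristic exponent $p^i$ appearing in \autoref{nota:omega} is trivial: $L_{\mathrm{sep}}=L$ and $m_0(x)=m(x)$, so $\omega_0(x)=m(x)/(x-t)$. This is exactly the content of \autoref{ex:omega-separable}, where the product rule applied to the factorization $m(x)=(x-t)\omega_0(x)$ gives $m'(x)=\omega_0(x)+(x-t)\omega_0'(x)$, and evaluation at $x=t$ yields $m'(t)=\omega_0(t)$. Substituting this into the displayed formula for $\Tr_k^L(\beta)$ gives $\Tr_k^L(\beta)=\tau_k^L(t)\bigl(\langle m'(t)\rangle\cdot\beta\bigr)$, as desired. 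There is essentially no obstacle: the proposition is a direct consequence of \autoref{def:cohomological-transfer} together with \autoref{ex:omega-separable}, and the only computation involved is the elementary derivative identity for $m(x)=(x-t)\omega_0(x)$.
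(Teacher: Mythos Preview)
Your proof is correct and follows essentially the same approach as the paper: invoke \autoref{def:cohomological-transfer} to express $\Tr_k^L$ as $\tau_k^L(t)\circ\langle\omega_0(t)\rangle$, and then use the separable case (\autoref{ex:omega-separable}) to identify $\omega_0(t)=m'(t)$. The paper's proof is terser but logically identical.
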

\begin{proof}
Since $L/k$ is separable, we have $\omega_0(t)=m_0(t)=m'(t)$. The result thus follows from \autoref{def:cohomological-transfer}.
\end{proof}

After giving a definition, we will be ready to show that scaled trace forms are in fact local $\A^1$-degrees.

\begin{definition}\label{def:a(x)}
Let $L/k$ be a finite simple field extension with primitive element $t$. Given $a\in L$, we then have $a=\sum_{i=0}^{[L:k]-1}a_it^i$, with $a_i\in k$ uniquely determined (since $t$ is fixed). Define $a(x):=\sum_{i=0}^{[L:k]-1}a_ix^i\in k[x]$.
\end{definition}

\begin{proposition}[\textit{Scaled Scharlau forms are $\A^1$-degrees}]\label{prop:scaled-scharlau-forms}  
Let $L/k$ be a finite separable extension with primitive element $t$, and let $m(x)\in k[x]$ be the minimal polynomial of $t$. Let $p\in\A^1_k$ be the closed point defined by $m(x)$. Let $a \in L^\times$. Then
\begin{align*}
    \tau_k^{L}(t) \left\langle a \right\rangle = \deg_p^{\A^1} (a(x)m(x)).
\end{align*}
\end{proposition}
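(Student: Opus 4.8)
The plan is to recognize this as the $d=1$ case of \autoref{lem:geometric-lift} (equivalently \autoref{thm:main}), combined with the rank-one Hankel computation of \autoref{lem:local-deg-geom-lift}.

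First I would translate the hypotheses into the setup of \autoref{notn:lift section}. Since $p\in\A^1_k$ is the closed point cut out by the minimal polynomial $m(x)$ of $t$, we have $k(p)=k[x]/(m(x))\cong k(t)=L$, and the canonical $L$-rational point $\til p$ over $p$ corresponds to the ideal $(x-t)\subset L[x]$. Writing $f(x)=a(x)m(x)$, I would check that this is precisely the factorization $f(x)=u(x)m(x)^d$ with $u(x)=a(x)$ and $d=1$: since $a\in L^\times$ we have $a(x)\neq 0$, and $\deg a(x)\le [L:k]-1<\deg m(x)$, so $m(x)\nmid a(x)$, i.e.\ $u=a(x)$ is non-vanishing at $p$. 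Evaluating in $L=k[x]/(m(x))$ gives $u(t)=a(t)=a$.

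Next, by definition the geometric lift is $\fg(x)=u(x)(x-t)^d=a(x)(x-t)$. Since $d=1$ is odd, \autoref{lem:local-deg-geom-lift} gives
\[
    \deg_{\til p}^{\A^1}(\fg) = \langle u(t)\rangle + \tfrac{d-1}{2}\mathbb{H} = \langle a\rangle
\]
in $\GW(L)$. Then \autoref{lem:geometric-lift} yields
\[
    \deg_p^{\A^1}\bigl(a(x)m(x)\bigr) = \tau_k^{k(p)}(t)\bigl(\deg_{\til p}^{\A^1}(\fg)\bigr) = \tau_k^{L}(t)\langle a\rangle,
\]
which is the assertion.

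There is no genuine obstacle here: all of the substance is already contained in \autoref{thm:main} and the Hankel-form input \autoref{prop:upper-Hankel-form} feeding \autoref{lem:local-deg-geom-lift}. The only points requiring care are the bookkeeping identifications $k(p)=L$, the fact that $u(x)=a(x)$ is a valid (non-vanishing at $p$) factor, and $u(t)=a$. One could alternatively give a self-contained argument via \autoref{prop:Bezoutian-of-m} and \autoref{prop:multiply-by-bez-to-get-geom-tr}, computing directly that $\Bez(a(x)m(x))\equiv a(X)\tfrac{m(X)-m(Y)}{X-Y}$ modulo $(m(X),m(Y))$ and reading off the Gram matrix of $s_\ast\langle a\rangle$ in the Horner basis, but routing through \autoref{thm:main} is the cleaner route.
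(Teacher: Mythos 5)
Your proof is correct, and it takes a genuinely more direct route than the paper's. The paper's own proof of this proposition goes through the \emph{cohomological} lift: it observes that $h_{\mathfrak c} = h_L$ (base change) since $L/k$ is separable, computes $\deg_t^{\A^1}(h_L) = \langle a(t) m'(t)\rangle$ via the derivative formula from \cite[Proposition~15]{KW-EKL}, applies the cohomological transfer by way of \autoref{cor:coh-tr}, and finally invokes \autoref{prop:hoyois} to trade the factor $\langle m'(t)\rangle$ for a conversion from $\Tr_k^L$ to $\tau_k^L(t)$. You instead read the statement as the $d=1$ case of \autoref{lem:geometric-lift}: identify $u(x) = a(x)$ (non-vanishing at $p$ because $\deg a(x) < \deg m(x)$), compute $\deg_{\til p}^{\A^1}(\fg) = \langle u(t)\rangle = \langle a\rangle$ from \autoref{lem:local-deg-geom-lift} with $d$ odd, and apply the geometric transfer. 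This avoids both the derivative formula and the $\Tr_k^L \leftrightarrow \tau_k^L(t)$ conversion entirely, and is arguably the more natural route since the Scharlau form \emph{is} the geometric transfer. (Amusingly, the paper uses the cohomological lift here and the geometric lift in the parallel \autoref{prop:scaled-trace-forms}, in each case requiring a conversion via \autoref{prop:hoyois}; your approach shows the conversion is avoidable for the Scharlau form.) The bookkeeping checks you flag — $k(p)\cong L$, $m(x)\nmid a(x)$, $u(t)=a$ — are exactly the right ones and are all correct.
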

\begin{proof}
Let $h(x)=a(x)m(x)$. By \autoref{prop:separable-cohom-lift-is-base-change}, we have that $m(x) = \omega_0(x)(x-t)$. Since $a(x)$ is non-vanishing at $t$, the cohomological lift of $h(x)$ is simply the base change $h_{\mathfrak{c}}(x) = h_{L}(x)$. By \cite[Proposition 15]{KW-EKL}, its local degree at $t$ is
\begin{align*}
    \deg_t^{\A^1} (h_{L}) &= \left\langle \left.\frac{d}{dx} h_{L}(x) \right|_{x=t} \right\rangle \\
    &= \left\langle \left. a'(x)m(x) + a(x) m'(x) \right|_{ x=t } \right\rangle \\
    &= \left\langle a(t) m'(t) \right\rangle.
\end{align*}
Applying the cohomological transfer and invoking \autoref{cor:coh-tr}, we have $\deg_p^{\A^1}(h) = \Tr_k^{L} \deg_t^{\A^1} (h_\mathfrak{c})$. Combining this with \autoref{prop:hoyois} concludes the proof.
\end{proof}

\begin{example} 
The Scharlau form $\tau_k^{k(p)} \left\langle 1 \right\rangle$ is the local degree of the minimal polynomial of $p$ at the point $p$. This is also equal to the global degree of the minimal polynomial by \autoref{cor:local-deg-of-min-poly-at-itself}. This indicates that unscaled Scharlau forms are uninteresting, in the sense that they are either entirely hyperbolic or hyperbolic plus a summand of $\left\langle 1 \right\rangle$.
\end{example}

\begin{proposition}[\textit{Scaled trace forms are $\A^1$-degrees}]\label{prop:scaled-trace-forms}  
Let $L/k$ be a finite separable extension with primitive element $t$, and let $m(x)\in k[x]$ be the minimal polynomial of $t$. Let $a \in k(p)^\times$. Then
\begin{align*}
    \Tr_k^{L} \left\langle a \right\rangle = \deg_p^{\A^1} (a(x)m'(x)m(x)).
\end{align*}
\end{proposition}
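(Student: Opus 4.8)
The plan is to combine the preceding \autoref{prop:scaled-scharlau-forms} with the relation between cohomological and geometric transfers from \autoref{prop:hoyois}. The key observation is that $\Tr_k^L\langle a\rangle = \tau_k^L(t)(\langle m'(t)\rangle\cdot\langle a\rangle) = \tau_k^L(t)\langle a m'(t)\rangle$, so we want to realize the scaled Scharlau form attached to the element $am'(t)\in L^\times$ as a local $\A^1$-degree. Applying \autoref{prop:scaled-scharlau-forms} with the element $am'(t)$ in place of $a$ would immediately give $\Tr_k^L\langle a\rangle = \deg_p^{\A^1}(b(x)m(x))$, where $b(x)$ is the polynomial associated (via \autoref{def:a(x)}) to the element $b:=am'(t)$.

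The main obstacle is purely bookkeeping: I need to check that $b(x)$, the polynomial associated to $am'(t)$, can be taken to be $a(x)m'(x)$ modulo $m(x)$, so that the local degree $\deg_p^{\A^1}(b(x)m(x))$ coincides with $\deg_p^{\A^1}(a(x)m'(x)m(x))$. For this I would first note that $a(x)m'(x)$ reduces modulo $m(x)$ to some polynomial of degree less than $n=[L:k]$ which evaluates to $a(t)m'(t)=am'(t)$ at $x=t$; by uniqueness of the coefficients in \autoref{def:a(x)}, this reduction \emph{is} $b(x)$. Thus $a(x)m'(x) \equiv b(x) \bmod m(x)$, so $a(x)m'(x)m(x)$ and $b(x)m(x)$ differ by an element of $(m(x)^2)$. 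Since both polynomials vanish to order at least one at $p$ with the same leading behavior relative to $m$ — more precisely, writing $a(x)m'(x) = b(x) + m(x)c(x)$ gives $a(x)m'(x)m(x) = b(x)m(x) + m(x)^2 c(x)$, so both have the form $u(x)m(x)$ with the \emph{same} unit part $u(x)\equiv b(x)\bmod m(x)$ — the local degrees agree. Indeed by \autoref{prop:local-degrees-are-global-degrees} (or directly by \autoref{lem:local-deg-geom-lift} applied with $d=1$), the local degree $\deg_p^{\A^1}(u(x)m(x))$ depends on $u$ only through $u(t)=b(t)=am'(t)$.

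So the proof outline is: first reduce $\Tr_k^L\langle a\rangle$ to $\tau_k^L(t)\langle am'(t)\rangle$ using \autoref{prop:hoyois}; second, invoke \autoref{prop:scaled-scharlau-forms} with primitive element $t$ and the element $am'(t)\in L^\times$ to get $\tau_k^L(t)\langle am'(t)\rangle = \deg_p^{\A^1}(b(x)m(x))$ where $b(x)$ is associated to $am'(t)$; third, identify $b(x)$ with the reduction of $a(x)m'(x)$ mod $m(x)$ and conclude $\deg_p^{\A^1}(b(x)m(x)) = \deg_p^{\A^1}(a(x)m'(x)m(x))$ since the two polynomials differ by a multiple of $m(x)^2$ and hence define the same local degree. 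A cleaner alternative, avoiding the reduction step entirely, is to directly recompute via the cohomological lift: by \autoref{prop:separable-cohom-lift-is-base-change} the cohomological lift of $h(x)=a(x)m'(x)m(x)$ at $p$ is the base change $h_L(x)$ (since $a(x)m'(x)$ is non-vanishing at $t$ because $m$ is separable), and by \cite[Proposition 15]{KW-EKL},
\begin{align*}
    \deg_t^{\A^1}(h_L) &= \left\langle \left.\tfrac{d}{dx}\bigl(a(x)m'(x)m(x)\bigr)\right|_{x=t}\right\rangle = \left\langle a(t)m'(t)^2 \right\rangle = \left\langle a(t)\right\rangle = \left\langle a \right\rangle,
\end{align*}
using that $m(t)=0$ kills the two terms containing $m(x)$ undifferentiated and $\langle m'(t)^2\rangle = \langle 1\rangle$. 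Then \autoref{cor:coh-tr} gives $\deg_p^{\A^1}(h) = \Tr_k^L\deg_t^{\A^1}(h_\mathfrak{c}) = \Tr_k^L\langle a\rangle$, which is exactly the claim. I would likely present this second argument as the proof, as it is self-contained and mirrors the proof of \autoref{prop:scaled-scharlau-forms} almost verbatim.
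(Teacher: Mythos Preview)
Your proposal is correct. Your preferred (second) argument is valid and closely parallels the paper's proof of \autoref{prop:scaled-scharlau-forms}, but it is not quite the route the paper takes for this proposition.

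The paper instead uses the \emph{geometric} lift rather than the cohomological one: writing $h(x)=a(x)m'(x)m(x)$ with $u(x)=a(x)m'(x)$ nonvanishing at $t$, the geometric lift is $h_{\mathfrak{g}}(x)=a(x)m'(x)(x-t)$, whose local degree at $t$ is $\langle a(t)m'(t)\rangle$. Then \autoref{lem:geometric-lift} gives $\deg_p^{\A^1}(h)=\tau_k^L(t)\langle a(t)m'(t)\rangle$, and \autoref{prop:hoyois} converts this to $\Tr_k^L\langle a\rangle$. Your argument goes through the cohomological lift (base change) and lands directly on $\langle a\rangle$ after the square $m'(t)^2$ drops out, so you invoke \autoref{cor:coh-tr} rather than \autoref{lem:geometric-lift} and never need \autoref{prop:hoyois}. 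The two arguments are dual in the sense of \autoref{def:cohomological-transfer}: the paper absorbs the factor $\langle m'(t)\rangle$ into the transfer, while you absorb it (squared) into the derivative computation. Both are equally short; your version has the minor advantage of not appealing to \autoref{prop:hoyois}, while the paper's version illustrates the geometric lift in action. Your first outline via \autoref{prop:scaled-scharlau-forms} is also fine, and your bookkeeping for why $\deg_p^{\A^1}(b(x)m(x))=\deg_p^{\A^1}(a(x)m'(x)m(x))$ is correct.
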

\begin{proof} 
Let $h(x) = a(x) m'(x) m(x)$. The geometric lift is given by $h_\mathfrak{g}(x) = a(x) m'(x) (x-t)$, so the local degree of $h_\mathfrak{g}$ at $t$ is
\begin{align*}
    \deg_t^{\A^1}(h_\mathfrak{g}) &= \left\langle \left. \frac{d}{dx} a(x) m'(x)(x-t) \right|_{ x=t } \right\rangle \\
    &= \left\langle a(t) m'(t) \right\rangle.
\end{align*}
Combining this with \autoref{prop:hoyois}, we have that
\[
    \deg_p^{\A^1}(h) = \tau_k^{L}(t)\left( \deg_t^{\A^1} \left( h_\mathfrak{g} \right)\right) = \Tr_k^{L} \left\langle a \right\rangle.\qedhere
\]
\end{proof}

\begin{example} 
Let $K=\Q(\sqrt[3]{2})$ with minimal polynomial $m(x)=x^3-2$. The extension $K/\Q$ has trace form
\begin{align*}
    \Tr_{K/\Q}\langle 1\rangle&=\left(\Tr_{K/\Q}(\sqrt[3]{2}^i\cdot\sqrt[3]{2}^j)\right)_{0\leq i,j\leq 2}\\
    &=\begin{pmatrix}
    3 & 0 & 0\\
    0 & 0 & 6\\
    0 & 6 & 0
    \end{pmatrix}\\
    &=\langle 3\rangle+\mb{H}.
\end{align*}
Using the code provided in \cite{BMP-code}, we verify that $\deg_{\sqrt[3]{2}}^{\A^1}(m'(x)\cdot m(x))=\langle 3\rangle+\mb{H}$.
\end{example}

\begin{remark} 
Given any irreducible polynomial $m(x)\in k[x]$ (defining a finite simple field extension $L/k$) and any unit $a\in L^\times$, we can readily compute the scaled trace form $\Tr_k^{L}\left \langle a \right\rangle$ using \autoref{prop:scaled-trace-forms} together with the Sage code provided in \cite{BMP-code}.
\end{remark}

\appendix

\section{Pictorial intuition for diagonalization arguments}\label{sec:pictorial-intuition}

Suppose we are given a symmetric bilinear form that can be represented by an upper left triangular Hankel matrix. The intuition behind the proof of \autoref{prop:upper-Hankel-form} is that the data of the matrix can be repackaged into ``upper-left corners.'' To illustrate what we mean by this, consider the $5 \times 5$ example illustrated in \autoref{fig:hankel}.

\begin{figure}
\begin{tikzpicture}[baseline=-\the\dimexpr\fontdimen22\textfont2\relax ]
\matrix (m)[matrix of math nodes,left delimiter=(,right delimiter=)]
{
a_1 & a_2 & a_3 & a_4 & a_5\\
a_2 & a_3 & a_4 & a_5 &\phantom{a_5}\\
a_3 & a_4 & a_5 & &\\
a_4 & a_5 & & &\\
a_5 & \phantom{a_5} & & &\\
};

\begin{pgfonlayer}{myback}
\fhighlight[red]{m-1-1}{m-1-5}
\fhighlight[red]{m-1-1}{m-5-1}
\fhighlight[blue]{m-2-2}{m-2-5}
\fhighlight[blue]{m-2-2}{m-5-2}
\fhighlight[green]{m-3-3}{m-3-3}
\end{pgfonlayer}
\end{tikzpicture}
\caption{Upper triangular Hankel matrix}\label{fig:hankel}
\end{figure}

Given a vector space basis $\left\{x_1,\ldots,x_5\right\}$, this matrix defines a bilinear form by $$\sum_{i,j} a_{i+j-1} x_i\otimes x_j.$$ Consider all the terms with a factor of $x_1$, illustrated in red. The top row is given by $x_1\otimes(a_1 x_1 + \ldots + a_5 x_5)$; by symmetry, the first column is given by $(a_1x_1+\ldots+a_5x_4)\otimes x_1$. Note that $a_1 x_1^{\otimes 2}$ is double counted, so we define a new basis element
\begin{align*}
    \psi_1 = \frac{a_1}{2}x_1 + a_2 x_2 + \ldots + a_5 x_5.
\end{align*}
In this terminology, the first corner of the matrix (highlighted in red) can be rewritten as $x_1\otimes\psi_1 + \psi_1\otimes x_1$. Similarly, for the second corner (highlighted in blue), we can define
\begin{align*}
    \psi_2 &= \frac{a_3}{2}x_2 + a_4 x_3 + a_5 x_4.
\end{align*}
Then the blue portion of the form is $x_2\otimes\psi_2 + \psi_2\otimes x_2$. Finally, we are left with the lone term in green, which is $a_5 x_3^{\otimes 2}$. We can thus define a new basis $\left\{ x_1, \psi_1, x_2, \psi_2, x_3 \right\}$. In this basis, our form can be written as $$x_1\otimes\psi_1+\psi_1\otimes x_1+x_2\otimes\psi_2+\psi_2\otimes x_2+a_5 x_3^{\otimes 2},$$ so the isomorphism class of this form is $2 \mathbb{H} + \left\langle a_5 \right\rangle$. 

Note that the Hankel structure was not used in this discussion --- we only needed symmetry and upper left triangularity.

\begin{remark} 
The proof of \autoref{prop:upper-Hankel-form} holds when the matrix is symmetric and upper left triangular, so the Hankel assumption is unnecessary.
\end{remark}

Passing to a more general case, replace the each $a_i$ with a block matrix $A_i$ (see \autoref{fig:block hankel}). We will use the same idea to diagonalize this matrix. If there is an odd number of blocks along the diagonal, we will stop our modifications short of the central block.
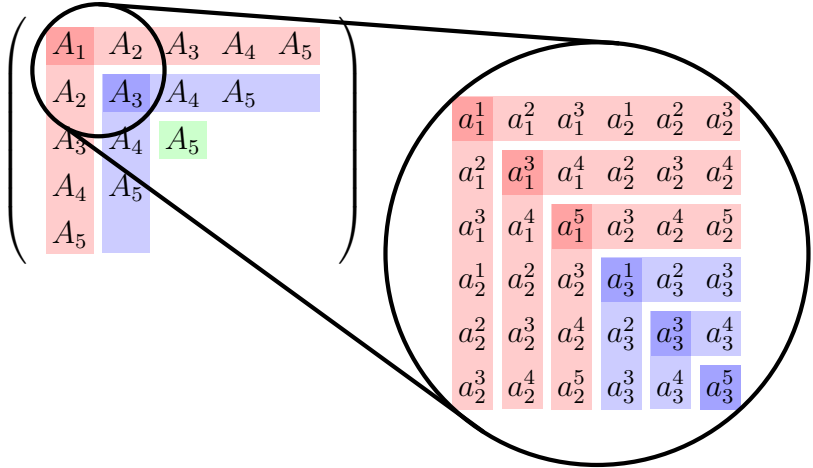
\begin{figure}
\begin{tikzpicture}[baseline=-\the\dimexpr\fontdimen22\textfont2\relax ]
\matrix (m)[matrix of math nodes,left delimiter=(,right delimiter=)]
{
A_1 & A_2 & A_3 & A_4 & A_5\\
A_2 & A_3 & A_4 & A_5 &\phantom{A_5}\\
A_3 & A_4 & A_5 & &\\
A_4 & A_5 & & &\\
A_5 & \phantom{A_5} & & &\\
};
\draw[ultra thick] (m-1-1.south east) circle (25pt);

\begin{scope}[xshift=5.5cm,yshift=-1.5cm]
\matrix (n)[matrix of math nodes]
{
a_1^1 & a_1^2 & a_1^3 & a_2^1 & a_2^2 & a_2^3\\
a_1^2 & a_1^3 & a_1^4 & a_2^2 & a_2^3 & a_2^4\\
a_1^3 & a_1^4 & a_1^5 & a_2^3 & a_2^4 & a_2^5\\
a_2^1 & a_2^2 & a_2^3 & a_3^1 & a_3^2 & a_3^3\\
a_2^2 & a_2^3 & a_2^4 & a_3^2 & a_3^3 & a_3^4\\
a_2^3 & a_2^4 & a_2^5 & a_3^3 & a_3^4 & a_3^5\\
};
\draw[ultra thick] (n-3-3.south east) circle (80pt);
\end{scope}

\draw[ultra thick] (m-1-1.south east)+(0,25pt) -- (5.8,1.29);
\draw[ultra thick] (m-1-1.south east)+(-12.5pt,-21.651pt) -- (4,-3.88);

\begin{pgfonlayer}{myback}
\fhighlight[red]{m-1-1}{m-1-5}
\fhighlight[red]{m-1-1}{m-5-1}
\fhighlight[blue]{m-2-2}{m-2-5}
\fhighlight[blue]{m-2-2}{m-5-2}
\fhighlight[green]{m-3-3}{m-3-3}
\fhighlight[red]{n-1-1}{n-1-6}
\fhighlight[red]{n-1-1}{n-6-1}
\fhighlight[red]{n-2-2}{n-2-6}
\fhighlight[red]{n-2-2}{n-6-2}
\fhighlight[red]{n-3-3}{n-3-6}
\fhighlight[red]{n-3-3}{n-6-3}
\fhighlight[blue]{n-4-4}{n-4-6}
\fhighlight[blue]{n-4-4}{n-6-4}
\fhighlight[blue]{n-5-5}{n-5-6}
\fhighlight[blue]{n-5-5}{n-6-5}
\fhighlight[blue]{n-6-6}{n-6-6}
\fhighlight[blue]{n-6-6}{n-6-6}
\end{pgfonlayer}
\end{tikzpicture}
\caption{Block upper triangular Hankel matrix}\label{fig:block hankel}
\end{figure}

We can now clarify the intuition behind the choice of
\begin{align*}
    \psi_i^\ell &= \underbrace{\vphantom{\sum_{j}^d}\frac{\beta_i^{2\ell-1}}{2} a_i b_{\ell}}_{\text{(i)}} + \underbrace{\vphantom{\sum_{j}^d}\sum_{k=\ell+1}^d \beta_i^{2\ell-1+k} a_i b_k}_{\text{(ii)}} + \underbrace{\vphantom{\sum_{j}^d}\sum_{j=i+1}^n \sum_{k=1}^d \beta_j^{k+\ell-1} a_j b_k}_{\text{(iii)}},
\end{align*}
which we used to diagonalize the block form in \autoref{lem:block-hankel-diag}. The term (i) is the term lying on the diagonal in the $i\textsuperscript{th}$ block on the $\ell\textsuperscript{th}$ row. The sum (ii) travels horizontally from the term on the diagonal until it reaches the edge of the block. Finally, the double sum (iii) continues the row to the right across all the other remaining blocks.

We can now decompose our form as a sum of hyperbolic forms $\sum_{i,\ell} a_ib_\ell\otimes\psi_i^\ell+\psi_i^\ell\otimes a_ib_\ell$. If there is an odd number of blocks, this decomposition will leave the central block (in this example, a copy of $A_5$) alone.

\begin{remark}
Again, we did not use any Hankel structure in this argument. In particular, the statement of \autoref{lem:block-hankel-diag} holds when the matrix is any symmetric matrix that is block upper left triangular.
\end{remark}

\bibliography{references}{}

\newcommand{\etalchar}[1]{$^{#1}$}
\begin{thebibliography}{BBM{\etalchar{+}}21}

\bibitem[BBM{\etalchar{+}}21]{trace-paper}
Thomas Brazelton, Robert Burklund, Stephen McKean, Michael Montoro, and Morgan
  Opie.
\newblock The trace of the local {$\mathbb A^1$}-degree.
\newblock {\em Homology Homotopy Appl.}, 23(1):243--255, 2021.

\bibitem[BMP21a]{BMP-code}
Thomas Brazelton, Stephen McKean, and Sabrina Pauli.
\newblock a1-degree.sage.
\newblock \url{https://github.com/shmckean/A1-degree/}, 2021.

\bibitem[BMP21b]{BMP21}
Thomas Brazelton, Stephen McKean, and Sabrina Pauli.
\newblock B\'ezoutians and the $\mathbb{A}^1$-degree.
\newblock arXiv:2103.16614, 2021.

\bibitem[BPR06]{real-ag}
Saugata Basu, Richard Pollack, and Marie-Fran\c{c}oise Roy.
\newblock {\em Algorithms in real algebraic geometry}, volume~10 of {\em
  Algorithms and Computation in Mathematics}.
\newblock Springer-Verlag, Berlin, second edition, 2006.

\bibitem[BW21]{BW}
Tom Bachmann and Kirsten Wickelgren.
\newblock {E}uler classes: Six-functors formalism, dualities, integrality and
  linear subspaces of complete intersections.
\newblock {\em Journal of the Institute of Mathematics of Jussieu}, page
  1–66, 2021.

\bibitem[Caz12]{Cazanave}
Christophe Cazanave.
\newblock Algebraic homotopy classes of rational functions.
\newblock {\em Ann. Sci. \'{E}c. Norm. Sup\'{e}r. (4)}, 45(4):511--534 (2013),
  2012.

\bibitem[CF17]{Calmes-Fasel}
Baptiste Calmès and Jean Fasel.
\newblock Finite {C}how--{W}itt correspondences, 2017.

\bibitem[CP84]{Conner-Perlis}
P.~E. Conner and R.~Perlis.
\newblock {\em A survey of trace forms of algebraic number fields}, volume~2 of
  {\em Series in Pure Mathematics}.
\newblock World Scientific Publishing Co., Singapore, 1984.

\bibitem[DJK21]{DJK}
Fr\'{e}d\'{e}ric D\'{e}glise, Fangzhou Jin, and Adeel~A. Khan.
\newblock Fundamental classes in motivic homotopy theory.
\newblock {\em J. Eur. Math. Soc. (JEMS)}, 23(12):3935--3993, 2021.

\bibitem[EHK{\etalchar{+}}20]{deloop2}
Elden Elmanto, Marc Hoyois, Adeel~A. Khan, Vladimir Sosnilo, and Maria
  Yakerson.
\newblock Framed transfers and motivic fundamental classes.
\newblock {\em J. Topol.}, 13(2):460--500, 2020.

\bibitem[Hoy14]{hoyois}
Marc Hoyois.
\newblock A quadratic refinement of the {G}rothendieck-{L}efschetz-{V}erdier
  trace formula.
\newblock {\em Algebr. Geom. Topol.}, 14(6):3603--3658, 2014.

\bibitem[Hoy21]{Hoyois-localization}
Marc Hoyois.
\newblock The localization theorem for framed motivic spaces.
\newblock {\em Compos. Math.}, 157(1):1--11, 2021.

\bibitem[Ioh82]{Iohvidov}
I.~S. Iohvidov.
\newblock {\em Hankel and {T}oeplitz matrices and forms}.
\newblock Birkh\"{a}user, Boston, Mass., 1982.
\newblock Algebraic theory, Translated from the Russian by G. Philip A.
  Thijsse, With an introduction by I. Gohberg.

\bibitem[KW19]{KW-EKL}
Jesse~Leo Kass and Kirsten Wickelgren.
\newblock The class of {E}isenbud-{K}himshiashvili-{L}evine is the local
  {$\mathbf{A}^1$}-{B}rouwer degree.
\newblock {\em Duke Math. J.}, 168(3):429--469, 2019.

\bibitem[KW20]{KW}
Jesse~Leo Kass and Kirsten Wickelgren.
\newblock A classical proof that the algebraic homotopy class of a rational
  function is the residue pairing.
\newblock {\em Linear Algebra Appl.}, 595:157--181, 2020.

\bibitem[Mor06]{Morel-ICM}
Fabien Morel.
\newblock {$\Bbb A^1$}-algebraic topology.
\newblock In {\em International {C}ongress of {M}athematicians. {V}ol. {II}},
  pages 1035--1059. Eur. Math. Soc., Z\"{u}rich, 2006.

\bibitem[Mor12]{Morel}
Fabien Morel.
\newblock {\em {$\mathbb{A}^1$}-algebraic topology over a field}, volume 2052
  of {\em Lecture Notes in Mathematics}.
\newblock Springer, Heidelberg, 2012.

\bibitem[MV99]{MV}
Fabien Morel and Vladimir Voevodsky.
\newblock {${\bf A}^1$}-homotopy theory of schemes.
\newblock {\em Inst. Hautes \'{E}tudes Sci. Publ. Math.}, (90):45--143 (2001),
  1999.

\bibitem[SS75]{SS75}
G{\"u}nter Scheja and Uwe Storch.
\newblock {\"U}ber {S}purfunktionen bei vollst{\"a}ndigen {D}urchschnitten.
\newblock {\em Journal f{\"u}r die reine und angewandte Mathematik (Crelles
  Journal)}, 1975:174 -- 190, 1975.

\end{thebibliography}
\bibliographystyle{alpha}
\end{document}